\numberwithin{equation}{section}
\newtheorem{thm}{Theorem}[section]
\newtheorem{cor}[thm]{Corollary}
\newtheorem{lem}[thm]{Lemma}
\newtheorem{prop}[thm]{Proposition}
\theoremstyle{definition}
\newtheorem{defn}[thm]{Definition}
\theoremstyle{remark}
\newtheorem{rmk}[thm]{Remark}
\newtheorem{exmp}[thm]{Example}
\newcommand{\N}{\mathbb{N}}
\newcommand{\Q}{\mathbb{Q}}
\newcommand{\GL}{\operatorname{GL}}
\newcommand{\SL}{\operatorname{SL}}
\newcommand{\SO}{\operatorname{SO}}
\newcommand{\tr}{\operatorname{tr}}
\newcommand{\ad}{\operatorname{ad}}
\renewcommand{\tr}{\operatorname{tr}}
\newcommand{\Tr}{\operatorname{Tr}}
\newcommand{\Trreg}{\operatorname{Tr}_{\operatorname{reg}}}
\newcommand{\vol}{\operatorname{vol}}
\renewcommand{\ker}{\operatorname{ker}}
\newcommand{\Hom}{\operatorname{Hom}}
\newcommand{\End}{\operatorname{End}}
\renewcommand{\dim}{\operatorname{dim}}
\renewcommand{\sp}{\operatorname{span}}
\newcommand{\rank}{\operatorname{rank}}
\newcommand{\unip}{\operatorname{unip}}
\newcommand{\geo}{\operatorname{geo}}
\newcommand{\spec}{\operatorname{spec}}
\newcommand{\Ind}{\operatorname{Ind}}
\newcommand{\FP}{\operatorname{FP}}
\newcommand{\mf}[1]{\mathfrak{#1}}
\newcommand{\A}{\mathbb{A}}
\newcommand{\C}{\mathbb{C}}
\newcommand{\Z}{\mathbb{Z}}
\newcommand{\R}{\mathbb{R}}
\renewcommand{\phi}{\varphi}
\renewcommand{\epsilon}{\varepsilon}
\renewcommand{\subset}{\subseteq}
\title{Asymptotics of analytic torsion for congruence quotients of $\SL(n,\R)/\SO(n)$}
\author{Tim Berland}
\address{Department of Mathematical Sciences, University of Copenhagen, Universitetsparken 5, 2100 Copenhagen, Denmark}
\email{twb@math.ku.dk}
\begin{document}
\maketitle

\markright{\MakeUppercase{Asymptotics of analytic torsion for quotients of} $\SL(n,\R)/\SO(n)$}

\noindent \textbf{Abstract.}
In this paper we prove a sharpened asymptotic for the growth of analytic torsion of congruence quotients of $\SL(n,\R)/\SO(n)$ in terms of the volume. The result is based on bounds on the trace of the heat kernel, allowing control of the large time behaviour of certain orbital integrals, as well as a careful analysis of error terms. The result requires the existence of $\lambda$-strongly acyclic representations, which we define and show exists in plenitude for any $\lambda>0$. The motivation is possible applications to torsion in the cohomology of arithmetic groups.

\tableofcontents

\pagebreak

\section{Introduction}\label{introduction}
\noindent Analytic torsion is an invariant of Riemannian manifolds, defined by Ray and Singer in the $70$'s (\cite{RS}). More recently, it was used by Bergeron and Venkatesh to study the torsion in the homology of cocompact arithmetic groups (\cite{BV}). This made it desirable to extend the definition of analytic torsion to manifolds associated to non-cocompact arithmetic groups. This was first accomplished by Müller--Pfaff (\cite{MP}) for finite volume hyperbolic manifolds. Later, it was extended further by Matz and Müller, first for congruence subgroups of $\SL(n)$ in \cite{MzM1}, and later for more general arithmetic groups in \cite{MzM3}. 

One of the reasons for studying torsion in the homology of arithmetic groups is the Ash conjecture (\cite{Ash}), partially proven by Scholze (\cite{Scholze}). In rough terms, this conjecture links systems of Hecke eigenvalues occurring in the mod $p$ cohomology of arithmetic groups to Galois representations with matching Frobenius eigenvalues. See (\cite{BV}, Section $6$) for a discussion on the Ash conjecture and its connection to analytic torsion. As explained in Scholze's paper, by applying the work of Bergeron--Venkatesh and others, in certain situations the Ash conjecture predicts the existence of many Galois representations, more than is predicted by the global Langlands correspondence. 

Further control of the behaviour of analytic torsion should translate to better understanding of torsion homology, and conjecturally to existence of Galois representations. With this motivation we give improved asymptotics of analytic torsion in terms of the volume of the associated manifolds. To state our theorem and the connection to homology, we recall the setups of \cite{BV} and \cite{MzM2}.

\subsection*{Analytic torsion in the compact setting}

Let $X$ be a compact oriented Riemannian manifold of dimension $d$, and let $\rho$ be a finite-dimensional representation of $\pi_1(X)$. To such a representation, one may associate a flat vector bundle $E_\rho\to X$ over $X$. Fixing a Hermitian fiber metric on $E_\rho$, we let $\Delta_p(\rho)$ be the Laplace operator on $E_\rho$-valued $p$-forms. For $t>0$, we define $e^{-t\Delta_p(\rho)}$ to be the heat operator, and we let $k_p(\rho) =\dim\ker \Delta_p(\rho)$. One can associate a zeta function $\zeta_p(s,\rho)$ to $\Delta_p(\rho)$, a meromorphic function on $\C$ defined by
\begin{align}\label{zeta}
    \zeta_p(s,\rho) = \frac{1}{\Gamma(s)}\int_0^\infty (\Tr e^{-t\Delta_p(\rho)}-k_p(\rho))t^{s-1} dt
\end{align}

\noindent for $\Re(s)>\frac{d}{2}$. We then define analytic torsion $T_X(\rho)$ of $M$ by
\begin{align}
    \log T_X(\rho) =\frac12 \sum_{p=1}^d (-1)^p p\frac{d}{ds}\zeta_p(s;\rho)\vert_{s=0}.
\end{align}

\noindent There is a corresponding $L^2$-invariant, called $L^2$-torsion denoted $T^{(2)}_X(\rho)$, defined by Lott and Mathai (resp. \cite{Lott}, \cite{Mathai}). For $\Tilde{X}$ the universal covering of $X$, it may be expressed as
\begin{align}\label{l2torsion}
    \log T^{(2)}_X(\rho) = t^{(2)}_{\Tilde{X}}(\rho)\vol(X),
\end{align}
where $t^{(2)}_{\Tilde{X}}(\tau)$ is an invariant associated to $\Tilde{X}$ and $\rho$.

To relate this to arithmetic groups, let $G$ be a semisimple algebraic group with $G(\R)$ of non-compact type, let $K$ be a maximal compact subgroup of $G(\R)$, and suppose $\Gamma\subset G(\Q)$ is a torsion-free congruence lattice. Assume further that $\Gamma$ is cocompact in $G(\R)$. Then $\Tilde{X}\coloneqq G(\R)/K$ is a Riemannian symmetric space, and $X\coloneqq \Gamma\backslash \Tilde{X}$ is a compact locally symmetric space, and as $\Tilde{X}$ is contractible, we have $\pi_1(X)=\Gamma$. Thus, given $\Gamma$ and a finite-dimensional representation $\rho$ of $\Gamma$, we associate to it the analytic torsion $T_X(\rho)$ of the space $X$ as defined above. 

By a theorem of Cheeger and Müller, later generalized by Bismut--Zhang (\cite{Cheeger}, \cite{Müller1}, \cite{Müller2}, \cite{BZ}), there is an equality of analytic torsion and \textit{Reidemeister torsion} on $X$, the latter being a topological invariant of $X$ computed in terms of homology (see \cite{Reidemeister}, \cite{Franz}). Since $X$ is a classifying space for $\Gamma$, the homology of $X$ computes the homology of $\Gamma$, and thus one may study the homology of $\Gamma$ through its associated analytic torsion.

Consider a descending chain of finite index subgroups $\Gamma=\Gamma_0\supseteq\Gamma_1\supseteq...$ in $G(\Q)$ satisfying $\bigcap_{i=1}^\infty\Gamma_i = \lbrace1\rbrace$, and let $X_i\coloneqq \Gamma_i\backslash \Tilde{X}$. Let $\tau$ be an irreducible finite-dimensional representation of $G(\R)$. By abuse of notation, we set $T_{X_i}(\tau)\coloneqq T_{X_i}(\tau|_{\Gamma_i})$. Given a non-degeneracy condition on $\tau$, namely requiring that it be \textit{strongly acyclic}, Bergeron and Venkatesh prove (\cite{BV}, Theorem $4.5$) that
\begin{align}\label{BVapprox}
    \lim_{i\to\infty} \frac{\log T_{X_i}(\tau)}{\vol(X_i)} = t^{(2)}_{\Tilde{X}}(\tau).
\end{align}

\noindent Bergeron and Venkatesh show that $t^{(2)}_{\Tilde{X}}(\tau)$ is non-zero if and only if the \textit{deficiency} of $G$, defined $\delta(G)\coloneqq\rank G(\R)-\rank K$, is $1$ (\cite{BV}, Proposition $5.2$). The assumption that $\tau$ is strongly acyclic also guarantees that the free homology is trivial, so Reidemeister torsion is expressed solely in torsion homology. Using the Cheeger-Müller theorem, this is interpreted as there being a lot of torsion in the homology when $\delta(G)=1$, and little torsion in all other cases. "Little torsion" should here be understood in a weak sense, as we only know that there is not full exponential growth of analytic torsion in terms of the volume. A natural question is then, also presented in \cite{AGMY}: what growth rate should we expect when the deficiency is not $1$, and can we more precisely describe the behaviour of analytic torsion in terms of the volume? The results of this paper is a first step in answering these questions.

The result (\ref{BVapprox}) can be thought of as an approximation theorem. In particular, noting that every $\Gamma_i$ is a finite index subgroup of $\Gamma$, we see that $\vol(X_i)=\vol(X)[\Gamma:\Gamma_i]$, where $X\coloneqq X_0$, and thus the result is equivalent to 
\begin{align}
    \lim_{i\to\infty} \frac{\log T_{X_i}(\tau)}{[\Gamma:\Gamma_i]} = \log T^{(2)}_{X}(\tau).
\end{align}

\noindent In words, analytic torsion associated to a tower of subgroups can be used to approximate the $L^2$-torsion associated to the group. Keeping in mind the equality of analytic torsion and Reidemeister torsion, this statement should be thought of as a torsion version of the analogous result on Betti numbers $b_p(X)$ and $L^2$-Betti numbers $b_p^{(2)}(X)$, proven by W. Lück (\cite{Lück1}), which we state here in our setting.
\begin{align}
    \lim_{i\to\infty} \frac{b_p(X_i)}{[\Gamma:\Gamma_i]} = b_p^{(2)}(X).
\end{align}

\noindent To further the analogy, here there is a criterion based on the deficiency as well: We have that $b_p^{(2)}(X)=0$ unless $\delta(G) = 0$. For a comprehensive survey on approximation of $L^2$-invariants, see \cite{Lück2}.

For certain approximation theorems, their rate of convergence has been explored (see e.g. \cite{BLLS}, and \cite{Lück2} Chapter 5), but is not very developed for analytic and $L^2$-torsion. This is mostly due to the fact that this approximation result is still conjectural in general, and only proven in very particular cases, such as the setting of this paper. Viewed in this light, Theorem \ref{mytheorem} presented below can be seen as an example of a rate of convergence result for analytic torsion in this aspect.

Many of the most important arithmetic groups in number theory are not cocompact. In general for non-cocompact arithmetic groups in semisimple Lie groups, we do not have a replacement for the Cheeger-Müller theorem, though certain special cases have recently been proven (see \cite{MR1}, \cite{MR3}). The non-cocompact setting do, however, have one  advantage, as it allows certain terms to contribute (namely the non-identity unipotent part, see section \ref{applyingfine}) which vanish in the cocompact setting, and these terms are very probable candidates for where second order terms should show up. For this reason, the present paper will focus on the non-cocompact setting. 

\subsection*{Analytic torsion in the non-compact setting} 

To state our main theorem, let us recall the setup of Matz and Müller (\cite{MzM1}, \cite{MzM2}). We switch to an adelic framework and focus our attention on $\SL(n)$, $n\geq 2$. Let $\A$ be the ring of adeles of $\Q$, with $\A_f$ the finite adeles. Let $X=\SL(n,\R)/\SO(n)$. For $N\geq 3$, let $K(N)\subset \SL(n,\A_f)$ be the open compact subgroup given by $K(N)=\prod_p K_p(p^{\nu_p(N)})$, with $K_p(p^e)\coloneqq \ker(\SL(n,\Z_p)\to \SL(n,\Z_p/p^e\Z_p))$. Define
\begin{align*}
    X(N) \coloneqq \SL(n,\Q)\backslash (\Tilde{X}\times \SL(n,\A_f))/K(N)).
\end{align*}

\noindent By strong approximation, we get that $X(N) = \Gamma(N)\backslash \Tilde{X}$, with $\Gamma(N)\subset \SL(n,\Z)$ the standard principal congruence subgroup of level $N$. Given a finite-dimensional representation $\tau$ of $\SL(n,\R)$, one can now follow the construction of the Laplace operator $\Delta_p(\tau)$ of the Laplace operator on $p$-forms with values in $E_\tau$, as above. As $X(N)$ is not compact, however, $\Delta_p(\tau)$ has continuous spectrum, and hence we need a refined definition of the analytic torsion. This is constructed by Matz and Müller in \cite{MzM1}. Most strikingly, they define the regularized trace of the heat kernel $\Trreg(e^{-t\Delta_p(\tau)})$ as the geometric side of the \textit{Arthur trace formula} for $\SL(n,\R)$ applied to the test function $h_t^{\tau,p}\otimes \chi_{K(N)}$, where $h_t^{\tau,p}$ is the trace of the heat kernel and $\chi_{K(N)}$ the normalized characteristic function of $K(N)$. We give the full definition in 
Section \ref{torsion}. This also reduces to the standard definition in the cocompact setting. In \cite{MzM2}, they prove the same limit behaviour as Bergeron-Venkatesh for this setup, namely the approximation formula
\begin{align}\label{MzM2mainresult}
    \lim_{N\to\infty}\frac{\log T_{X(N)}(\tau)}{\vol(X(N))} = t^{(2)}_{\Tilde{X}}(\tau).
\end{align}

\noindent Formulated in terms of asymptotics of analytic torsion, the above is equivalent to
\begin{align}\label{MzMasymp}
    \log T_{X(N)}(\tau)=\log T^{(2)}_{X(N)}(\tau)+o(\vol(X(N))) \quad\text{as}\quad N\to\infty.
\end{align}

\subsection*{Results} 

As a step towards better understanding the behavior of analytic torsion in terms of the volume, in this paper we prove a stronger version of the asymptotic (\ref{MzMasymp}). In particular, the new result provides an improved upper bound on analytic torsion when the deficiency is not $1$, as well as a bound on second order terms when the deficiency is $1$. We need a slightly stronger non-degeneracy assumption on $\tau$ which we call $\lambda$-strongly acyclic. Our result is the following.
\begin{thm}\label{mytheorem}
    Assume $\tau$ is a $\lambda$-strongly acyclic representation of $\SL(n,\R)$, for a certain $\lambda$ depending only on $n$. Then there exists some $a>0$ such that
    $$\log T_{X(N)}(\tau) = \log T_{X(N)}^{(2)}(\tau)+O(\vol(X(N))N^{-(n-1)}\log(N)^a)$$
    as $N$ tends to infinity.
\end{thm}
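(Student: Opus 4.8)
The plan is to leverage the Matz--Müller machinery, tracking error terms quantitatively rather than merely showing they are $o(\vol)$. Recall that $\log T_{X(N)}(\tau) - \log T^{(2)}_{X(N)}(\tau)$ is, after the usual manipulations of the Ray--Singer zeta functions, controlled by the difference between the regularized trace $\Trreg(e^{-t\Delta_p(\tau)})$ and its $L^2$-analogue $\vol(X(N))\cdot t^{(2)}$-density, integrated against $t^{-1}$ over $(0,\infty)$ with the appropriate alternating sum over $p$ weighted by $p$. The geometric side of the Arthur trace formula applied to $h_t^{\tau,p}\otimes\chi_{K(N)}$ decomposes into the identity contribution (which produces exactly $\log T^{(2)}_{X(N)}(\tau)$ plus a term that vanishes because we sum against $\sum_p(-1)^p p$ and use the symmetry properties of $\tau$), the non-identity semisimple (elliptic and hyperbolic) contributions, the unipotent contributions, and the weighted/mixed contributions coming from the non-cocompactness. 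So the first step is to write $\log T_{X(N)}(\tau) - \log T^{(2)}_{X(N)}(\tau) = \sum_{p}(-1)^p p \cdot \tfrac12 \int_0^\infty \bigl(J_{\mathrm{geo}}^{\neq e}(h_t^{\tau,p}\otimes\chi_{K(N)})\bigr)\, \tfrac{dt}{t}$ up to terms handled as in \cite{MzM2}, and to bound each piece of $J_{\mathrm{geo}}^{\neq e}$ uniformly in $N$.

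Second, I would split the $t$-integral into small time $t\in(0,1]$ and large time $t\in[1,\infty)$. For small time, strong acyclicity already gives a spectral gap, but more importantly the orbital integrals at non-central elements are exponentially small as $t\to 0$ because the heat kernel $h_t^{\tau,p}$ is concentrated near the identity; the key new input is to make the dependence on the \emph{level} $N$ explicit. Here the crucial combinatorial/geometric fact is that for $\gamma\in\Gamma(N)$ non-central, the displacement function is bounded below in terms of $N$ (roughly, non-central elements of $\Gamma(N)$ that can contribute have matrix entries forced to be large, of size a power of $N$), which translates — via the Gaussian decay of $h_t^{\tau,p}$ along geodesics and Matz--Müller's bounds on the relevant orbital integrals (Proposition-level statements in \cite{MzM1,MzM2}) — into a saving of a factor $N^{-(n-1)}$ (the exponent being dictated by the codimension/rank bookkeeping for $\SL(n)$). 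The volume factor $\vol(X(N))$ appears because the number of relevant conjugacy classes, or equivalently the global volume normalization in the trace formula, scales like $\vol(X(N))=\vol(X(1))\cdot[\SL(n,\Z):\Gamma(N)]$; one factors this out and shows the remaining sum over classes, together with the $t$-integral, is $O(N^{-(n-1)}\log(N)^a)$.

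Third, the large-time regime $t\in[1,\infty)$ is where $\lambda$-strong acyclicity is used in its strong form: we need the integrand to decay fast enough in $t$ so that $\int_1^\infty$ converges with the right $N$-dependence. For the identity/cuspidal contribution this is the spectral gap of size (at least) $\lambda$ giving $e^{-\lambda t}$-decay; for the unipotent and weighted contributions — the genuinely non-cocompact terms flagged in the introduction as candidates for second-order behavior — one invokes the paper's promised new bound on the trace of the heat kernel controlling the large-time behavior of the orbital integrals, i.e. polynomial-in-$t$ control against an overall $e^{-ct}$ coming from $\lambda$. Combining, the large-time integral contributes at most $O(\vol(X(N))N^{-(n-1)})$ as well, possibly with an extra $\log(N)^a$ from summing geometric-series-type bounds over the (at most $O(\log N)$ many) primes dividing $N$ and over the finitely many parabolic classes, each contributing a polynomial-in-$\log N$ factor.

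The main obstacle will be the large-time analysis of the unipotent and mixed (weighted orbital integral) terms: unlike the elliptic/hyperbolic terms these do not decay from geometric displacement, so all the decay must come from the representation-theoretic side, and one must show that the \emph{new} heat-kernel trace bound is strong enough — uniformly in $N$ — to beat the polynomial growth in $t$ of the weighted orbital integrals on $\SL(n,\R)$, while simultaneously extracting the clean power $N^{-(n-1)}$ and controlling the accumulation of logarithmic factors across primes dividing $N$. Defining $\lambda$-strong acyclicity correctly (large enough $\lambda$, depending on $n$, to dominate all these polynomial-in-$t$ weights at once) and verifying plenitude of such $\tau$ is the conceptual crux that makes the rest go through.
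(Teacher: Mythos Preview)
Your high-level plan---isolate the identity contribution as $L^2$-torsion and bound everything else by $\vol\cdot N^{-(n-1)}\log(N)^a$---is correct, and you correctly identify the two regimes and the role of $\lambda$-strong acyclicity. But your proposal diverges from the paper's proof in a way that leaves a genuine gap.

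The paper does \emph{not} bound the non-unipotent semisimple contributions via displacement and Gaussian decay of the heat kernel. Instead, it multiplies $h_t^{\tau,p}$ by a smooth cutoff $\phi_R$ supported in the geodesic ball of radius $R=C_n\log N$; an elementary characteristic-polynomial argument (Proposition~\ref{compactunip}) shows that for this $R$ every $G(\A)$-conjugate of a non-unipotent rational element lies outside the support of $h_{t,R}^{\tau,p}\otimes\chi_{K(N)}$, so $J_{\geo}=J_{\unip}$ on the nose. The cost of this replacement is a single error term (Proposition~\ref{compactdiff}) of size $e^{-C_1R^2/t+C_2t}\vol$, which must then be balanced against the truncation error. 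Your alternative---summing over non-unipotent classes and invoking $e^{-r(\gamma)^2/t}$ decay---would require, in the non-cocompact Arthur setting, controlling the full coarse expansion class by class with explicit $N$-dependence; you do not explain how to do this, and for large $t$ your displacement argument gives no saving at all, since $e^{-r^2/t}\to 1$.

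Relatedly, you split the $t$-integral at the \emph{fixed} point $t=1$. The paper instead truncates at $T=\beta\log N$: the remainder $\int_T^\infty$ of the full spectral side is bounded by $e^{-\lambda(1-\epsilon)T}\vol=N^{-\lambda(1-\epsilon)\beta}\vol$, and only by choosing $\beta$ (depending on $n$) and then $\lambda>(n-1)/\beta$ does this become $O(\vol\cdot N^{-(n-1)})$. With a fixed cut at $t=1$, the large-time remainder is merely $O(\vol)$ with no $N$-saving, because the spectral-side bound $Ce^{-ct}\vol$ knows nothing about the level. This is precisely the ``dance of error terms'' in Section~\ref{dance}, and it is where the dependence of $\lambda$ on $n$ enters.

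Finally, the exponent $n-1$ does not come from displacement of semisimple elements. It comes from the $p$-adic weighted orbital integrals at \emph{non-identity unipotent} classes: Lemma~\ref{localbound} gives $|J_M^{L}(\chi_{K(N),Q},\mf{u}_{\fin})|\ll N^{-\dim_M^G\mf{u}/2}(\log N)^d\vol(K(N))^{-1}$, and $\dim_M^G\mf{u}/2\geq n-1$ whenever $(M,\mf{u})\neq(G,\{1\})$. The archimedean factor is handled separately: its small-$t$ expansion (Proposition~\ref{orbitsmallt}) and the new large-$t$ bound (Proposition~\ref{orbitasymp}, coming from Proposition~\ref{traceheatdecay}) together make its Mellin transform well defined with growth only $R^a(\log R)^b=(\log N)^{a'}$. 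Your proposal conflates these two sources of saving.
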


\begin{rmk}\label{mytheoreminvar}
    Computing the size of $\vol(X(N))$ in terms of $N$ (see the appendix), the theorem implies a more intrinsic version:
\begin{align}
    \log T_{X(N)}(\tau) = \log T_{X(N)}^{(2)}(\tau)+O(\vol(X(N))^{1-\frac{1}{n+1}}\log(\vol(X(N))^a)
\end{align}
\noindent as $N\to\infty$.
\end{rmk}

\noindent To ensure that the technical constraint is justified, we prove the existence of infinitely many $\lambda$-strongly acyclic representations for any connected semisimple algebraic group $G$ with $\delta(G)\geq 1$. This result is an extension of the result (\cite{BV}, Section $8.1$), stating that strongly acyclic representations always exist for semisimple $G$ with $\delta(G)=1$.

It seems likely that up to log-terms, the bound in the theorem is strict in the case of deficiency $1$, though this is not proven here. When the deficiency is not $1$, a better upper bound is expected. A lower bound would be extremely interesting as well, but this is not in the scope of this paper.

Because of technical reasons, we will work with $\GL(n)$ instead of $\SL(n)$. For $Y(N)$ the analogous adelic locally symmetric space of $\GL(n)$, one can define analytic torsion $T_{Y(N)}(\tau)$ in a similar manner (see (\ref{torsion}) for the general definition). Our proof is then that of Theorem \ref{glmytheorem}, which is the analogous statement of Theorem \ref{mytheorem} for $Y(N)$, and we show the two theorems are equivalent.

The proof is based on the framework of \cite{MzM2}. The main contribution of this paper lies in the change of perspective to allow certain parameters, namely the compactification parameter (see Section \ref{compactsub}) and the truncation parameter $T$ (see (\ref{trunc})) to vary with the level $N$, and in the work needed to allow for this variation. In particular, we show the existence of infinitely many $\lambda$-strongly acyclic representations of any connected semisimple algebraic group (Proposition \ref{kstrong}), and we adapt to our setting certain bounds on the trace of the heat kernel in the large time aspect (e.g. Proposition \ref{traceheatdecay}). These bounds are used to gain control over large time behaviour of the archimedean orbital integrals showing up in the analysis of the Arthur trace formula.

\subsection*{Organization of the paper} 

In Section \ref{definitions}, we present the setup for the heat kernel on symmetric spaces. In Section \ref{representations}, we prove the existence of infinitely many $\lambda$-strongly acyclic representations for any semisimple algebraic group. Section \ref{plancherel} gives a proof of large $t$ asymptotics of the trace of the heat kernel. We give a brief introduction to the Arthur trace formula and its geometric expansions in Section \ref{trace formula} and define analytic torsion, as well as express the trace formula applied to a compactification of our test function. In Section \ref{asymptotics}, we handle the asymptotics of our local orbital integrals. Finally, we combine all the ingredients in Section \ref{conclusion} to prove our main theorem.

\subsection*{Aknowledgement}

The author would like to thank Jasmin Matz, for suggesting this topic and for her advice throughout the project. I thank the University of Bonn for their hospitality while working on this project. I would also like to thank Werner Müller, for our helpful discussions while in Bonn and for his comments to a draft of this paper.

\medskip

\noindent The work presented here is supported by the Carlsberg Foundation, grant CF21-0374.
\bigskip
\bigskip

\section{Preliminaries}\label{definitions}
\subsection*{Arithmetic manifolds}\label{arithmfd}

This section will deal with the general setup of $G$ being a reductive algebraic group defined over $\Q$ with $\Q$-split zenter $Z_G$. See (\cite{Arthur0}, Section I.$2$) for a partial reference.  Let $K_f\subset G(\A_f)$ be any open compact subgroup. We write $G(\A) = G(\A)^1\times A_G$ for $A_G$ the identity component of $Z_G$, and we set $G(\R)^1\coloneqq G(\A)^1\cap G(\R)$, which is then a semisimple real Lie group. We fix a Cartan involution $\theta$ of $G(\R)$ and let $K$ denote its fixpoints. We set $\Tilde{X}\coloneqq G(\R)^1/K$. The group $G(\R)$ has a natural action on the finite double coset space,

\begin{align*}
    A_G(\R)^0G(\Q)\backslash G(\A)/G(\R)K_f.
\end{align*}
Taking a set of representatives $z_1,\dots,z_m$ for the double cosets in $G(\A_f)$ and defining 
\begin{align*}
    \Gamma_j \coloneqq (G(\R)\times z_jK_fz_j^{-1})\cap G(\Q)
\end{align*}
for all $1\leq j\leq m$, the action induces the decomposition
\begin{align}\label{decomparithm}
    A_G(\R)^0 G(\Q)\backslash G(\A)/K_f \cong \bigsqcup_{j=1}^m (\Gamma_j\backslash G(\R)^1).
\end{align}
Now we define the arithmetic manifold associated to $K_f$. Set
\begin{align}\label{adelicsym}
    X(K_f)\coloneqq G(\Q)\backslash (\Tilde{X} \times G(\A_f))/K_f.
\end{align}
Using (\ref{decomparithm}), we get
\begin{align}\label{decompsym}
    X(K_f)\cong \bigsqcup_{j=1}^m (\Gamma_j\backslash \Tilde{X}).
\end{align}
Here, $\Gamma_j\backslash \Tilde{X}$ is a locally symmetric space. We will assume that $K_f$ is neat such that $X(K_f)$ is a locally symmetric manifold of finite volume.

\subsection*{The heat kernel}\label{heatker}

\noindent In the setup above, given a finite-dimensional unitary representation of $K$, we let $\Tilde{E}_\nu$ be the associated homogeneous Hermitian vector bundle over $\Tilde{X}$. Using homogeneity, one can push down this vector bundle to give locally homogeneous Hermitian vector bundles over each component $\Gamma_j\backslash \Tilde{X}$. Taking the disjoint union of these vector bundles, we get a locally homogeneous vector bundle $E_\nu$ over $X(K_f)$. 

As it is sufficient to work on each component in (\ref{decompsym}), we continue this section with the simplifying assumption that $G$ is a connected semisimple algebraic group. Let $K\subset G(\R)$ be a maximal compact subgroup, and $\Gamma\subset G(\R)$ a torsion-free lattice. We let $\Tilde{X}=G(\R)/K$ and $X=\Gamma\backslash \Tilde{X}$. Take a finite-dimensional unitary representation $(\nu,V_\nu)$ of $K$ with inner product $\langle\cdot,\cdot\rangle_\nu$, and define
\begin{align*}
    \Tilde{E_\nu} \coloneqq G(\R)\times_\nu V_\nu
\end{align*}
as the associated homogenous vector bundle over $\Tilde{X}$. The action of $K$ on $G(\R)$ is by right multiplication. The inner product $\langle\cdot,\cdot\rangle_\nu$ induces a $G(\R)$-invariant metric $\Tilde{h}_\nu$ on this vector bundle. 
Let $E_\nu\coloneqq \Gamma\backslash\Tilde{E_\nu}$ be the associated locally homogenous vector bundle over $X$, with metric $h_\nu$ induced by $\Tilde{h}_\nu$ using $G(\R)$-invariance. 
Let $C^\infty(\Tilde{X},\Tilde{E}_\nu)$ denote the space of smooth sections of $\Tilde{E}_\nu$. Now, set
\begin{align*}
    C^\infty(G(\R),\nu) \coloneqq \lbrace &f:G(\R)\to V_\nu \mid f\in C^\infty,\\
    &f(gk) = \nu(k)^{-1}f(g) \:\forall k\in K,g\in G(\R)\rbrace.
\end{align*}
There is an isomorphism (\cite{Miatello}, p. 4) interpreting the smooth sections as smooth functions on $G(\R)$,
\begin{align}\label{c-inf spaces}
    C^\infty(\Tilde{X},\Tilde{E}_\nu) \xrightarrow{\sim} C^\infty(G(\R),\nu).
\end{align}
\noindent This extends to an isometry of corresponding $L^2$-spaces. Let $\mathcal{C}(G(\R))$ denote Harish-Chandra's Schwartz space, and $\mathcal{C}^q(G(\R))$ Harish-Chandra's $L^q$-Schwartz space. 

We now specify the above to our setting. Let $(\tau,V_\tau)$ be an irreducible finite-dimensional representation of $G(\R)$, $E_\tau\coloneqq E_{\tau|_K}$, and $F_\tau$ the flat vector bundle over $X$ associated to the restriction of $\tau$ to $\Gamma$. Then we have a canonical isomorphism (see \cite{MM}, Proposition $3.1$)
\begin{align*}
    E_\tau\cong F_\tau.
\end{align*}
As $K$ is compact there exists an inner product on $V_\tau$ with respect to which $\tau|_K$ is unitary. Fix such an inner product. From this we induce a metric on $E_\tau$, and hence on $F_\tau$ as well. Define $\Lambda^p(X,F_\tau)\coloneqq \Lambda^pT^*(X)\otimes F_\tau$. Under the isomorphism above, this is isomorphic to the vector bundle associated to the representation $\Lambda^p\text{Ad}^*\otimes \tau$ of $K$ on $\Lambda^p\mf{p}^*\otimes V_\tau$. We will denote this representation by $\nu_{\tau,p}$. 

Let $\Delta_p(\tau)$ be the Laplace operator on $\Lambda^p(X,F_\tau)$, and let $\Tilde{\Delta}_p(\tau)$ be its lift to the universal covering $\Tilde{X}$. Let also $\Tilde{F}_\tau$ be the pullback of $F_\tau$ to $\Tilde{X}$. Then $\Tilde{\Delta}_p(\tau)$ is an operator on the space $\Lambda^p(\Tilde{X},\Tilde{F}_\tau)$ of $\Tilde{F}_\tau$-valued $p$-forms on $\Tilde{X}$. By (\ref{c-inf spaces}), we get an isomorphism
\begin{align}\label{pformsrep}
    \Lambda^p(\Tilde{X},\Tilde{F}_\tau)\cong C^\infty(G(\R),\nu_{\tau,p}).
\end{align}
Let $R$ be the right regular representation of $G(\R)$ on $C^\infty(G(\R),\nu_{\tau,p})$, and $\Omega$ the Casimir element of $G(\R)$. With respect to the above isomorphism, Kuga's lemma implies
\begin{align}\label{casimirglobal}
    \Tilde{\Delta}_p(\tau)=\tau(\Omega)-R(\Omega).
\end{align}
The operator $\Tilde{\Delta}_p(\tau)$ is formally self-adjoint and non-negative. Regarded as an operator with domain the space of compactly supported smooth $p$-forms, it has a unique self-adjoint extension to $L^2(\Tilde{X},\Tilde{F}_\tau)$, the $L^2$-sections of $\Tilde{F}_\tau$, which we by abuse of notation will also denote $\Tilde{\Delta}_p(\tau)$. This extension inherits non-negativity. We denote by $e^{-t\Tilde{\Delta}_p(\tau)}$, with $t>0$, the heat semigroup associated to $\Tilde{\Delta}_p(\tau)$. Considered as a bounded operator on $L^2(G(\R),\nu_{\tau,p})$ under the extension of the isomorphism (\ref{pformsrep}), it is a convolution operator, and thus it has a kernel
\begin{align*}
    H_t^{\tau,p}:G(\R)\to \End(\Lambda^p\mf{p}^*\otimes V_\tau)
\end{align*}
called the heat kernel. It satisfies the following covariance property
\begin{align}\label{covar}
    H_t^{\tau,p}(k^{-1}gk')=\nu_{\tau,p}(k)^{-1}\circ H_t^{\tau,p}(g)\circ \nu_{\tau,p}(k'), \quad \forall k,k'\in K, g\in G(\R).
\end{align}
By analogy of the proof of (\cite{BM}, Proposition $2.4$), we have that 
\begin{align}\label{heatkernelspace}
    H_t^{\tau,p}\in \mathcal{C}^q(G(\R))\otimes \End(\Lambda^p\mf{p}^*\otimes V_\tau)
\end{align} 
for any $q>0$. We may define
\begin{align}\label{trheat}
    h_t^{\tau,p}(g)\coloneqq \tr H_t^{\tau,p}(g), \quad g\in G(\R),
\end{align}
for $\tr$ being the trace over $\Lambda^p\mf{p}^*\otimes V_\tau$. By (\ref{covar}) and (\ref{heatkernelspace}), we have that $h_t^{\tau,p}(g)\in \mathcal{C}^q(G(\R))_{K\times K}$ for any $q>0$, with $\mathcal{C}^q(G(\R))_{K\times K}$ the subspace of $\mathcal{C}^q(G(\R))$ consisting of left and right $K$-finite functions.

\bigskip

\section{Strongly acyclic representations}\label{representations}
\noindent We continue with the setup from Section \ref{heatker}. In particular, $G$ is a connected semisimple algebraic group. In the setting that $\Gamma\backslash\Tilde{X}$ is compact, (\cite{BV}, §$4$) defines an irreducible finite-dimensional representation $(\tau,V_\tau)$ of $G(\R)$ to be \textit{strongly acyclic} if there exists some positive constant $\eta>0$ such that every eigenvalue of $\Delta_p(\tau)$ is $\geq \eta$ for every choice of $\Gamma$ and every $p$. Furthermore, they show that for any such $\tau$ not fixed by the Cartan involution, then $\tau$ must be strongly acyclic. This condition, i.e. $\tau\neq \tau\circ\theta$, is shown to give the necessary bounds in the noncompact setting in (\cite{MzM2}, Lemma $6.1$), and is used throughout that paper. 

For our purposes, we need a slightly stronger condition that we define now. This will be the central object of this section.
\begin{defn}
    Let $(\tau,V_\tau)$ be a finite-dimensional representation of $G(\R)$, and let $\lambda>0$. We say that $\tau$ is $\lambda$\textit{-strongly acyclic} if
    \begin{align*}
        \tau(\Omega)-\pi(\Omega) \geq \lambda
    \end{align*}
    for all irreducible unitary representation $\pi$ satisfying $\Hom_K(\Lambda^p\mf{p}\otimes V_\tau^*,\pi)\neq 0$ for some $p$.
\end{defn}
\noindent We see the analogy to the definition of strongly acyclic by comparing to (\ref{casimirglobal}). For $\tau$ a $\lambda$-strongly acyclic representation, we will sometimes say that $\lambda$ is the \textit{spectral gap} of $\tau$. We now prove that there are plenty of such representations to choose from.

\begin{prop}\label{kstrong}
    For any $\lambda>0$, there exists infinitely many $\lambda$-strongly acyclic representations of $G(\R)$ if $\delta(G(\R))\geq 1$.
\end{prop}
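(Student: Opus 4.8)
\medskip
\noindent\emph{Idea of proof.} The plan is to reduce the claim to an inequality between Casimir eigenvalues and then to exploit the strict inequality $\rank K<\rank G(\R)$ furnished by $\delta(G(\R))\geq 1$. Recall that $\Omega$ acts on the irreducible $\tau$ of highest weight $\mu$ by $\tau(\Omega)=\langle\mu+\rho,\mu+\rho\rangle-\langle\rho,\rho\rangle$, and on an irreducible unitary $\pi$ by $\pi(\Omega)=\langle\Lambda_\pi,\Lambda_\pi\rangle-\langle\rho,\rho\rangle$, where $\Lambda_\pi$ is the infinitesimal character, $\rho$ the half-sum of the positive roots, and $\langle\cdot,\cdot\rangle$ the (complex-bilinearly extended) form dual to the Killing form; none of this depends on the chosen Cartan subalgebra. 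So $\tau$ is $\lambda$-strongly acyclic exactly when $\langle\mu+\rho,\mu+\rho\rangle-\langle\Lambda_\pi,\Lambda_\pi\rangle\geq\lambda$ for every irreducible unitary $\pi$ with $\Hom_K(\Lambda^p\mf{p}\otimes V_\tau^*,\pi)\neq 0$ for some $p$, and the task is to make this gap large by a good choice of $\mu$.

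Fix a maximally compact $\theta$-stable Cartan subalgebra $\mf{h}=\mf{t}\oplus\mf{a}$ of $\mf{g}$, so that $\mf{t}=\mf{h}\cap\mf{k}$ is a Cartan subalgebra of $\mf{k}$ and $\dim\mf{a}=\delta(G(\R))$, and take highest weights relative to $\mf{h}$. The first ingredient is an upper bound for $\langle\Lambda_\pi,\Lambda_\pi\rangle$ when $\Hom_K(\Lambda^p\mf{p}\otimes V_\tau^*,\pi)\neq 0$. Such a $\pi$ contains a $K$-type $\sigma$ occurring in $\Lambda^p\mf{p}\otimes V_\tau^*$; its highest weight $\mu_\sigma$ is a $\mf{t}$-weight of $\Lambda^p\mf{p}\otimes V_\tau^*$, and since the $\mf{t}$-weights of $V_\tau^*$ lie in the convex hull of $\{(w\mu)|_{\mf{t}}\}_{w\in W}$ while tensoring with $\Lambda^p\mf{p}$ shifts weights boundedly, one gets $\|\mu_\sigma\|\leq L(\mu)+C_0$ with $L(\mu):=\max_{w\in W}\|(w\mu)|_{\mf{t}}\|$ and $C_0=C_0(G)$. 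I would then invoke the standard fact that the infinitesimal character of an irreducible unitary representation is controlled by any of its $K$-types: realizing $\pi$ as a Langlands subquotient of a generalized principal series induced from a tempered representation of a Levi $MA$ with continuous parameter $\nu$, temperedness bounds the $M$-infinitesimal character in terms of $\mu_\sigma$ and unitarity bounds $\|\Re\nu\|$ by a constant depending only on $G$, yielding $\langle\Lambda_\pi,\Lambda_\pi\rangle\leq(L(\mu)+C_1)^2$ for some $C_1=C_1(G)$.

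The second ingredient is the choice of $\tau$, and it is here that $\delta(G(\R))\geq1$ is used: then $\mf{t}\subsetneq\mf{h}$, so each subspace $w\cdot\mf{t}^*$ ($w\in W$) is proper in $\mf{h}^*$ and we may choose a dominant weight $\mu_0$ avoiding $\bigcup_{w\in W}w\cdot\mf{t}^*$; consequently $\|(w\mu_0)|_{\mf{t}}\|<\|\mu_0\|$ for all $w$, i.e.\ $L(\mu_0)<\|\mu_0\|$, and $L(m\mu_0)=mL(\mu_0)$ by homogeneity. Letting $\tau_m$ be the irreducible representation of highest weight $m\mu_0$, for every irreducible unitary $\pi$ contributing to $\tau_m$ we obtain
\begin{align*}
\langle m\mu_0+\rho,m\mu_0+\rho\rangle-\langle\Lambda_\pi,\Lambda_\pi\rangle
&\geq m^2\|\mu_0\|^2+2m\langle\mu_0,\rho\rangle-\bigl(mL(\mu_0)+C_1\bigr)^2\\
&= m^2\bigl(\|\mu_0\|^2-L(\mu_0)^2\bigr)+O(m),
\end{align*}
which tends to $+\infty$. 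Hence $\tau_m$ is $\lambda$-strongly acyclic for all large $m$, and these are infinitely many pairwise non-isomorphic representations.

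The step I expect to be the main obstacle is the uniform bound $\langle\Lambda_\pi,\Lambda_\pi\rangle\leq(L(\mu)+C_1)^2$ with $C_1$ depending only on $G$: the cheap inequality $\pi(\Omega)\leq\tau(\Omega)$ (non-negativity of the Hodge--Laplacian on $E_\tau$-valued forms) only gives $\|\Lambda_\pi\|\leq\|\mu+\rho\|$, whose leading term cancels against $\tau(\Omega)$, so one genuinely needs the sharper estimate coming from the Langlands classification together with the boundedness of the complementary-series parameters of $\widehat{G(\R)}$. This is also where the hypothesis is sharp: when $\rank K=\rank G(\R)$ one has $L(\mu_0)=\|\mu_0\|$, the quadratic terms cancel, and discrete series do provide unitary $\pi$ with $\pi(\Omega)=\tau(\Omega)$, precluding $\lambda$-strong acyclicity for any $\lambda$.
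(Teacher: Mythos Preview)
Your overall strategy matches the paper's: both show that along a suitable ray $m\mu_0$ in the dominant chamber the gap $\tau_m(\Omega)-\pi(\Omega)$ grows quadratically in $m$, so that all but finitely many $\tau_m$ are $\lambda$-strongly acyclic. The difference lies entirely in how the key upper bound on $\pi(\Omega)$ is obtained, and this is precisely the step you flag as your ``main obstacle'' and do not actually carry out.

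The paper bypasses the Langlands classification. It quotes the inequality of Bergeron--Venkatesh (\cite{BV}, Lemma~4.1), which is a form of Parthasarathy's Dirac inequality: for any unitary $\pi$ with $\Hom_K(\Lambda^p\mf{p}\otimes V_\tau^*,\pi)\neq 0$ one has $\tau(\Omega)-\pi(\Omega)\ge\eta(\tau)$ with
\[
\eta(\tau)\ =\ |\nu+\rho|^2-\Bigl|\tfrac12(\mu'+\theta\mu')+w\rho\Bigr|^2
\]
for some weight $\mu'$ of $V_\tau^*$ and some $w\in W^1$, where $\nu$ is the highest weight of $V_\tau^*$. The paper then adds one refinement (Lemma~\ref{weight}): this $\mu'$ may always be taken equal to $\nu$ itself. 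Decomposing the real root space orthogonally as $i(\mf{h}^+)^*\oplus(\mf{h}^-)^*$ with $\mf{h}^+=\mf{t}$, the map $\tfrac12(\mathrm{id}+\theta)$ is orthogonal projection onto the first summand, so $\eta(\tau)=\|\nu^-\|^2+(\text{terms linear in }\nu)$. Since $\delta(G(\R))\ge 1$ forces $\dim\mf{h}^-\ge 1$, any ray in the dominant chamber with nontrivial $(\mf{h}^-)^*$-component makes $\eta(\tau)\to\infty$. This is exactly your mechanism $L(\mu_0)<\|\mu_0\|$ in different clothing: the paper's $\|\nu^+\|=\|\nu|_{\mf t}\|$ plays the role of (a single Weyl translate contributing to) your $L(\mu)$.

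So your proposed route via the Langlands classification and uniform bounds on complementary-series parameters, while plausible, is both harder to make rigorous and unnecessary: the estimate $\langle\Lambda_\pi,\Lambda_\pi\rangle\le(L(\mu)+C_1)^2$ that you need is an immediate corollary of the Dirac inequality above, with $C_1=\|\rho\|$. Replacing your Langlands sketch by a direct appeal to Parthasarathy (equivalently to \cite{BV}, Lemma~4.1) closes the gap, and at that point your argument and the paper's essentially coincide.
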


\noindent The proof takes up the remainder of the section. We set $F=V_\tau^*$ to be the dual of the representation space to match the notation of \cite{BV}. Consider $\mathfrak{g}\coloneqq \text{Lie}(G(\R))$ with $\mf{g}_\C$ its complexification. Recall that $\theta$ is a Cartan involution on $\mf{g}$ with Cartan decomposition $\mf{g}=\mf{k}\oplus\mf{p}$. This also gives the decomposition $\mf{g}_\C=\mf{k}_\C\oplus\mf{p}_\C$. Let $\mf{h}^+$ be a Cartan subalgebra in $\mf{k}$. Denote by $\mf{h}$ the centralizer of $\mf{h}^+$ in $\mf{g}$, which is then a Cartan subalgebra in $\mf{g}$.

The following two lemmas are elementary and certainly well known, but we were not able to find precise references. Hence, we will give the proofs. Similar, but not identical, results are presented in (\cite{Helgason2}, Section VI.$3$).

\begin{lem}\label{decomp}
    There exists an abelian subalgebra $\mf{h}^-\subset \mf{p}$ such that $\mf{h}=\mf{h}^+\oplus \mf{h}^-$.
\end{lem}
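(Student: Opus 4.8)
The plan is to decompose the Cartan subalgebra $\mf{h}$ into its $+1$ and $-1$ eigenspaces under the Cartan involution $\theta$. First I would observe that $\mf{h}$ is $\theta$-stable: indeed, $\mf{h}$ is the centralizer of $\mf{h}^+$ in $\mf{g}$, and since $\mf{h}^+\subset\mf{k}$ is fixed pointwise by $\theta$, for any $X\in\mf{h}$ and $Y\in\mf{h}^+$ we have $[\theta X, Y] = [\theta X, \theta Y] = \theta[X,Y] = 0$, so $\theta X\in\mf{h}$ as well. Because $\theta$ is an involution on the finite-dimensional space $\mf{h}$, it splits as $\mf{h} = (\mf{h}\cap\mf{k})\oplus(\mf{h}\cap\mf{p})$, the $\pm 1$-eigenspaces. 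Set $\mf{h}^-\coloneqq\mf{h}\cap\mf{p}$.

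Next I would identify $\mf{h}\cap\mf{k}$ with $\mf{h}^+$. The inclusion $\mf{h}^+\subset\mf{h}\cap\mf{k}$ is immediate since $\mf{h}^+$ is abelian (hence centralizes itself) and lies in $\mf{k}$. For the reverse inclusion, any element of $\mf{h}\cap\mf{k}$ centralizes $\mf{h}^+$ and lies in $\mf{k}$; since $\mf{h}^+$ is a \emph{maximal} abelian subalgebra of $\mf{k}$ (a Cartan subalgebra of $\mf{k}$), its centralizer in $\mf{k}$ is $\mf{h}^+$ itself, giving $\mf{h}\cap\mf{k}\subset\mf{h}^+$. Hence $\mf{h}\cap\mf{k} = \mf{h}^+$, and combining with the eigenspace decomposition yields $\mf{h} = \mf{h}^+\oplus\mf{h}^-$ with $\mf{h}^-\subset\mf{p}$. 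Finally, $\mf{h}^-$ is abelian because it is a subspace of the abelian Lie algebra $\mf{h}$.

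I do not expect any serious obstacle here; the only point requiring a little care is invoking that a Cartan subalgebra of the compact Lie algebra $\mf{k}$ is self-centralizing in $\mf{k}$, which is standard. The argument is purely formal once $\theta$-stability of $\mf{h}$ is established.
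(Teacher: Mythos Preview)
Your proposal is correct and is essentially the same argument as the paper's, just packaged more cleanly: the paper picks a basis of $\mf{h}$, decomposes each basis vector as $M_i+P_i$ along $\mf{k}\oplus\mf{p}$, and then uses $[\mf{k},\mf{k}]\subset\mf{k}$, $[\mf{k},\mf{p}]\subset\mf{p}$ together with maximality of $\mf{h}^+$ in $\mf{k}$ to conclude $M_i\in\mf{h}^+$, which is exactly your $\theta$-stability plus eigenspace decomposition carried out elementwise. The only cosmetic difference is that you state $\theta$-stability of $\mf{h}$ up front, whereas the paper verifies the consequence ($P_i\in\mf{h}$) by hand.
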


\begin{proof}
    Take any basis $(K_1,\dots,K_d)$ of $\mf{h}^+$, and extend it to a basis 
    $$(K_1,\dots, K_d,X_1,\dots,X_s)$$
    of $\mf{h}$. Using the Cartan decomposition, write $X_i = M_i+P_i$ for $M_i\in \mf{k}$ and $P_i\in \mf{p}$. Since $[K,K']\in \mf{k}$ and $[P,K]\in \mf{p}$ for any $\lambda,K'\in\mf{k}$ and $P\in\mf{p}$, and $\mf{k}\cap \mf{p}=0$, we see that $[M_i,K_j]+[P_i,K_j] = [X_i,K_j]=0$ for all $i,j$ implies that $[M_i,K_j]=[P_i,K_j]=0$. In particular, $M_i\in \mf{k}$ commutes with all of $\mf{h}^+$, but as this is a \textit{maximal} abelian subalgebra of $\mf{k}$, we get $M_i\in \mf{h}^+$. Thus, it becomes clear that 
    $$\mf{h} = \text{span}\lbrace K_1,\dots, K_d,P_1,\dots,P_s\rbrace.$$
\end{proof}

\noindent Let $\Phi_k$ be the roots of $\mf{h}^+_\C$ in $\mf{k}_\C$, and let $\Phi$ be the roots of $\mf{h}_\C$ in $\mf{g}_\C$. Pick $\Phi^+_k$ a set of positive roots of $\Phi_k$, and let $\Phi^+$ be a \textit{compatible} choice of positive roots of $\Phi$ (see \cite{BW}, II, §$6.6$). A compatible root system $\Phi^+_k$ is defined as a root system which is closed under the Cartan involution (acting by precomposition), and every element in $\Phi^+_k$ is the restriction of some element in $\Phi^+$ to $\mf{h}^+_\C$. Let $\rho$ and $\rho_k$ be the respective half-sum of positive roots. Set $W$ to be the Weyl group of $\mf{g}_\C$, and let $W^1$ be the subset of elements $w$ such that $w\Phi^+$ is again a compatible system of positive roots.

We will be concerned with the weight lattice of $\mf{g}_\C$, which lives inside the real span of the roots, $\sp_\R \Phi$. To utilize our decomposition above, we need the following elementary lemma. We extend $(\mf{h}^+)^*$ to a subspace of $\mf{h}^*$ be setting elements to be identically zero on $\mf{h}^-$, and extend $(\mf{h}^-)^*$ by setting elements identically zero on $\mf{h}^+$. We further identify these as (real) subspaces of $\mf{h}^*_\C$ using that $\mf{h}^*_\C= \mf{h}^*\oplus i\mf{h}^*$. 

\begin{lem}\label{dualdecomp}
    We have a decomposition $\sp_\R \Phi = i(\mf{h}^+)^*\oplus (\mf{h}^-)^*$, and this decomposition is orthogonal with respect to the inner product induced by the Killing form.
\end{lem}

\begin{proof}
    Define $\mf{h}_0^* \coloneqq \sp_\R \Phi$. Given $H\in \mf{h}$, note that the values $\phi(H)$ for $\phi\in\Phi\sqcup \lbrace 0 \rbrace$ are exactly the eigenvalues of $\ad_H$, by definition. Also, it is an elementary fact that for any element $X$ in a compact subalgebra, all eigenvalues of $\ad_X$ are imaginary -- one could argue as follows: Its Lie group is compact, thus its action on the Lie algebra by conjugation has eigenvalues of absolute value $1$. Now take logarithms. 

As $\mf{k}$ is a compact Lie subalgebra of $\mf{g}_\C$, the above implies that for $K\in\mf{h}^+$ and $\phi\in\Phi_k$ we have $\phi(K)\in i\R$. Thus, the roots must take values in $\R$ on $i\mf{h}^+$. Similarly, as $i\mf{p}$ is a subspace of the compact real form $\mf{u}=\mf{k}\oplus i\mf{p}$ of $\mf{g}_\C$, the roots must take values in $i\R$ on $i\mf{p}$, and thus values in $\R$ on $\mf{p}$. This proves the inclusion $\mf{h}_0^* \subset i(\mf{h}^+)^*\oplus (\mf{h}^-)^*$, and equality follows from comparing dimensions.

The second claim follows easily from the fact that $\mf{k}\oplus \mf{p}$ is orthogonal with respect to the Killing form, hence so is $i\mf{h}^+\oplus \mf{h}^-$, and this space is isomorphic to its dual induced by the inner product given by the Killing form.

\end{proof}

\noindent As this may be a nonstandard choice of positive root systems to the reader, let us consider an example.

\begin{exmp}\label{SL(3,R)example}
    Consider $G(\R) = \SL(3,\R)$ with the usual Cartan involution on its Lie algebra $\theta(X) =-X^t$ and Cartan decomposition $\mf{g}=\mf{k}\oplus\mf{p}$ into skew-symmetric and symmetric traceless real $3\times 3$ matrices. Let
    \begin{align*}
        K=\begin{pmatrix}
            &1& \\
            -1&& \\
            &&
        \end{pmatrix}
        \:,\quad P = \begin{pmatrix}
            1 & & \\
            & 1 & \\
            & & -2
        \end{pmatrix}.
    \end{align*}
    \noindent Then $\mf{h}^+\coloneqq \sp_\R\lbrace K \rbrace$ is a Cartan subalgebra of $\mf{k}$, and its centralizer is $\mf{h} = \sp_\R\lbrace K,P\rbrace$. Complexifying, we have $\mf{h}_\C = \sp_\C\lbrace K,P\rbrace$. One can check that the adjoint action of $K$ on $\mf{k}_\C$ has eigenvalues $0,i,-i$ with respective eigenspaces
\begin{align*}
    \mf{h}^+_\C, \quad \sp_\C  \begin{pmatrix}
        & & 1\\
        & & i\\
        -1&-i&
    \end{pmatrix}, \quad 
    \sp_\C  \begin{pmatrix}
        & & 1\\
        & & -i\\
        -1&i&
    \end{pmatrix} .
\end{align*}
In particular, we may choose a positive root system $\Phi^+_k\coloneqq \lbrace \phi\rbrace$ for $\mf{h}^+_\C$ with $\phi:\mf{h}^+_\C\to \C$ given by $\phi(K) = i$. 

Consider now the dual space $\mf{h}^*_\C$ and the dual basis $\lbrace K^*,P^*\rbrace$ defined by $K^*(K) = 1$, $K^*(P) = 0$ and analogously for $P^*$. One can then check that the six roots of $\mf{h}_\C$, in terms of this basis, are $\pm( 2i,0), \pm( i,3)$ and $\pm( i,-3)$. As the Cartan involution acts by precomposition, we see that the dual basis inherits the action of the Cartan involution from the original basis, i.e. $\theta(K^*) = K^*$ and $\theta(P^*)=-P^*$.

In particular, the action of the Cartan involution on a vector written in the dual basis is $(a,b)\mapsto (a,-b)$. We then have exactly one choice of a compatible positive root system for $\mf{h}$: As it must restrict to $\Phi_k^+$, we must pick $(i,3)$ or $(i,-3)$, and as we have to be closed under the Cartan involution we have to pick both. Thus, by the axioms of positive root systems, we see that $\Phi^+ = \lbrace (i,3),(i,-3),(2i,0)\rbrace$.     
\end{exmp}

\noindent We return to the general setting. As the adjoint action of $\mf{k}$ preserves the Killing form on $\mf{p}$, which defines $\mf{so}(\mf{p})$, we get a natural map $\mf{k}\to\mf{so}(\mf{p})$, and thus, we get an induced representation of $\mf{k}$ on $S\coloneqq\text{Spin}(\mf{p})$. In the proof of (\cite{BV}, Lemma $4.1$), it is noted that every highest weight of $F\otimes S$ is of the form $\frac12(\mu+\theta\mu)+w\rho-\rho_k$ for some weight $\mu$ of $F$ and some $w\in W^1$. We use this to give a more precise statement.

\begin{lem}\label{weight}
    Any highest weight of $F\otimes S$ is always of the form $\frac12(\nu+\theta\nu)+w\rho-\rho_k$ for $\nu$ the \textit{highest} weight of $F$, and some $w\in W^1$.
\end{lem}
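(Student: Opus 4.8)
The plan is to refine the statement from the proof of (\cite{BV}, Lemma $4.1$) that every highest weight of $F\otimes S$ has the form $\frac12(\mu+\theta\mu)+w\rho-\rho_k$ for \emph{some} weight $\mu$ of $F$ and some $w\in W^1$, by showing that one may always take $\mu$ to be the highest weight $\nu$ of $F$. The key point is a dominance comparison: since $F$ is irreducible with highest weight $\nu$, every weight $\mu$ of $F$ satisfies $\nu-\mu$ a nonnegative integer combination of simple roots in $\Phi^+$, and one must track how the operations $\mu\mapsto\frac12(\mu+\theta\mu)$ and $\mu\mapsto w\mu$ interact with this ordering.

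First I would observe, using Lemma \ref{dualdecomp}, that the map $P_+\colon\mu\mapsto\frac12(\mu+\theta\mu)$ is exactly the orthogonal projection (with respect to the $\theta$-invariant inner product from the Killing form) onto the $+1$-eigenspace $i(\mf{h}^+)^*$ of $\theta$. Then I would use compatibility of the root systems: for $w\in W^1$, the set $w\Phi^+$ is again a compatible positive system, so the simple roots of $\Phi^+_k$ are among the restrictions of $w\Phi^+$ to $\mf{h}^+_\C$, i.e. among $P_+(w\alpha)$ for $\alpha$ simple in $\Phi^+$ — more precisely the relevant dominance for $\mf{k}$-highest weights is governed by the cone spanned by the $P_+(w\alpha)$. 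Combining these, if $\mu$ is any weight of $F$, then
\begin{align*}
    \bigl(\tfrac12(\nu+\theta\nu)+w\rho-\rho_k\bigr)-\bigl(\tfrac12(\mu+\theta\mu)+w\rho-\rho_k\bigr)
    = P_+\bigl(w(w^{-1}(\nu-\mu))\bigr)
\end{align*}
and since $\nu-\mu\in\sum_{\alpha\in\Phi^+}\Z_{\geq0}\,\alpha$ one checks that $P_+(w(\nu-\mu))$ lies in the cone of $\mf{k}$-dominant directions determined by the compatible system $w\Phi^+$; hence $\frac12(\nu+\theta\nu)+w\rho-\rho_k$ $\mf{k}$-dominates $\frac12(\mu+\theta\mu)+w\rho-\rho_k$. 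Therefore the maximal elements among all the weights $\frac12(\mu+\theta\mu)+w\rho-\rho_k$ (for $\mu$ ranging over weights of $F$ and $w$ over $W^1$) — which is what the highest weights of $F\otimes S$ must be — are attained with $\mu=\nu$, giving the claim.

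The main obstacle I anticipate is the bookkeeping in the previous sentence: one must verify carefully that $P_+$ of a nonnegative combination of elements of $w\Phi^+$ is a nonnegative combination of the $\mf{k}$-simple roots attached to the compatible system $w\Phi^+$, which requires knowing that $P_+$ sends positive roots of a compatible system into the closed cone generated by the corresponding $\mf{k}$-positive roots (including possibly $0$, for roots vanishing on $\mf{h}^+$). This is where compatibility (closure under $\theta$ and the restriction property, cf. \cite{BW}, II, §$6.6$) is essential, and is the only nontrivial input; once it is in place, the dominance comparison and hence the lemma follow formally. A secondary, purely cosmetic point is to note that since $F$ is irreducible its highest weight $\nu$ is unique, so there is no ambiguity in the statement.
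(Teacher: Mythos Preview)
Your dominance-comparison idea is exactly the paper's, but you have taken a harder route in two respects.

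First, the Weyl element $w$ plays no role in the comparison: in your displayed difference the term $w\rho-\rho_k$ cancels and what remains is simply $P_+(\nu-\mu)$. The rewriting as $P_+\bigl(w(w^{-1}(\nu-\mu))\bigr)$, the subsequent appeal to ``the compatible system $w\Phi^+$'', and the appearance of $P_+(w(\nu-\mu))$ (which looks like a slip) are all unnecessary. The fixed system $\Phi^+$ is the only one involved.

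Second, and more substantively, you try to land the difference in the $\Phi_k^+$-cone and correctly flag this as the delicate step. The paper sidesteps it entirely by staying in the $\Phi^+$-ordering on $\sp_\R\Phi$: compatibility means $\theta$ permutes $\Phi^+$, so if $\nu-\mu$ is a nonnegative combination of positive roots then so is $\theta(\nu-\mu)$, and hence so is their half-sum $P_+(\nu-\mu)=M(\nu)-M(\mu)$. This already gives $M(\nu)\succeq M(\mu_0)$ in the $\Phi^+$-ordering, and equality forces $\nu=\mu_0$ (a nonzero element of the positive cone cannot be $\theta$-anti-invariant, since its image under $\theta$ also lies in the positive cone). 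Since $M(\nu)$ is itself a weight of $F\otimes S$, maximality of the highest weight $M(\mu_0)$ then forces $\mu_0=\nu$. Thus the ``main obstacle'' you anticipate---that $P_+$ carries the $\Phi^+$-cone into the $\Phi_k^+$-cone---is never needed; only $\theta$-stability of $\Phi^+$ is used.
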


\begin{proof}
    Let $\frac12(\mu_0+\theta\mu_0)+w_0\rho-\rho_k$ be a highest weight of $F\otimes S$, for some $\mu_0$ and $w_0\in W^1$. We show that this $\mu_0$ is $\nu$. Let $M(\mu)\coloneqq \frac12(\mu+\theta\mu)+w_0\rho-\rho_k$. We claim that if $\mu\succ \mu'$, then $M(\mu)\succ M(\mu')$. Indeed, as the Cartan involution fixes the set of our positive roots, by choice of a compatible root system, it fixes the fundamental Weyl chamber, such that $\mu$ is a positive combination of positive roots if and only if the same is true for $\theta(\mu)$. Now we see that
    $$M(\mu)-M(\mu') = \frac12((\mu-\mu')+\theta(\mu-\mu')),$$
    and so, if we assume $(\mu-\mu')$ is a positive combination, so is $\theta(\mu-\mu')$, and thus so is their half-sum, proving the claim. 
    
    As every irreducible representation has a unique highest weight, we have a unique highest weight $\nu$ for $F$. By the above, we immediately get $M(\nu)\succeq M(\mu_0)$, with equality iff $\nu=\mu_0$.
\end{proof}

\noindent We are now ready to prove Proposition \ref{kstrong}. It is shown in the proof of (\cite{BV}, Lemma $4.1$) that 
\begin{align}\label{BVeigenval}
    \tau(\Omega)-\pi(\Omega)\geq \eta(\tau)
\end{align}
for each irreducible unitary representation $\pi$ of $G(\R)$ satisfying $\Hom_K(\Lambda^p\mf{p}\otimes V_\tau^*,\pi)\neq 0$, where $\eta(\tau)$ is given by
\begin{align}\label{delta}
    \eta(\tau) = |\nu+\rho|^2-\left|\frac12(\mu+\theta\mu)+w\rho\right|^2
\end{align}
for the choice of $\mu$ and $w$ given above (\cite{BV}, ($4.1.2$)), and the norm associated to the inner product induced by the Killing form. By Lemma \ref{delta}, this $\mu$ must be $\nu$, so we have
\begin{align*}
    \eta(\tau) = |\nu+\rho|^2-\left|\frac12(\nu+\theta\nu)+w\rho\right|^2
\end{align*}
Consider $(K_1^*,\dots,K_d^*,P_1^*,\dots,P_s^*)$, an orthonormal basis respecting the decomposition in Lemma \ref{dualdecomp}. With respect to this basis, we will write any weight $\mu$ as $\mu=\mu^++ \mu^-$, where $\mu^+=(\mu^+_1,\dots,\mu^+_d,0,\dots,0)$ and $\mu^-=(0,\dots,0,\mu^-_1,\dots,\mu^-_s)$. Recall the fact that $\mf{k}_\C$, $\mf{p}_\C$ are the eigenspaces of $\theta$ with eigenvalues $1$ and $-1$, respectively. As the dual basis inherits the action from the Cartan involution, we get that $\theta(\mu^++ \mu^-)=\mu^+- \mu^-$. In particular, $\frac12(\mu+\theta\mu) = \mu^+$. Thus, for $\nu=(\nu_1^+,\dots,\nu_d^+,\nu_1^-,\dots,\nu_s^-)$ the highest weight of $F$, expressed in the basis above, we may write
\begin{align}\label{eta}
    \eta(\tau) = |\nu+\rho|^2-\left\vert\frac12(\nu+\theta\nu)+w\rho\right\vert^2 = \sum_{i=1}^s |\nu_i^-|^2+\text{linear terms in }\nu.
\end{align}

\noindent Note that the second equality above follows from the orthogonality of the basis. By (\ref{BVeigenval}), we need only argue that there exists infinitely many $\tau$ such that $\eta(\tau)\geq \lambda$. By the expression above, it is sufficient to be able to find infinitely many representations with highest weight having large $\mf{h}^-$-part. This is possible by the theorem of the highest weight, stating that every dominant integral element is the unique highest weight of an irreducible representation. The dominant integral elements constitute a lattice in the fundamental Weyl chamber associated to $\Phi^+$, which is some cone in the weight space. By the assumption $\delta(G(\R))\geq 1$, we know that $s\geq 1$. 

To be precise, we do the following: Fix any half-line in the fundamental Weyl chamber starting at $0$ and passing through a lattice point, and not lying in the hyperplane given by $x_{d+1}=x_{d+2} = \dots = x_{d+s}=0$. This line is guaranteed to pass through infinitely many lattice points. For any $\lambda>0$ and any linear polynomial $p$ in $d+s$ variables there exists some $r>0$ such that at distance at least $r$ from $0$, every point on this line will have $\sum^s_{i=1}|x_{d+i}|^2-|p(x_1,\dots,x_{d+s})|$ larger than $\lambda$. Considering the expression (\ref{eta}), we see that any lattice point lying on the line with distance at least $r$ to $0$ must have $\eta(\tau)\geq \lambda$. Thus, the line contains infinitely many lattice points associated to $\lambda$-strongly acyclic representations. This finishes the proof of Proposition \ref{kstrong}.

\medskip

\begin{rmk}
    Instead of the final paragraph using half-lines, let us here provide a more visual and geometric argument. Let $G=\SL(3,\R)$ for simplicity. Here, the fundamental Weyl chamber is a cone in two-dimensional space. Writing points $(x,y)$ in terms of the basis given in Example \ref{SL(3,R)example}, we see that by (\ref{eta}), the inequality $\eta(\tau)\geq \lambda$ turns into an inequality of the form
    \begin{align*}
        \lambda \le y^2+ay-bx-c,
    \end{align*}
    for $a,b,c\in\R$. Geometrically, this means that we are considering points outside some parabola. Below we have visualized the positive Weyl chamber as enclosed by the blue lines, the parabola in red and the area outside it in the cone in yellow.
    \begin{center}
        \includegraphics[scale=0.4]{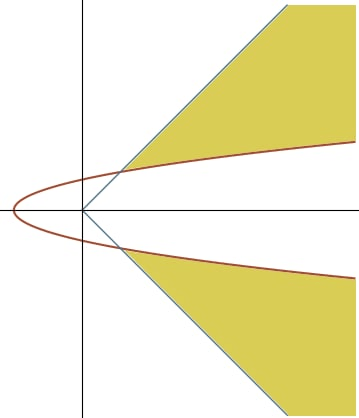}
        
        {\footnotesize\emph{Figure 1. The $\lambda$-strongly acyclic region in the fundamental Weyl chamber of $\SL(3,\R)$.}}
    \end{center}
    From the picture it is clear that the yellow region contains infinitely many lattice points for any lattice in the cone. In fact, it contains \emph{most} of them.
\end{rmk}

\noindent As $\delta(\SL(n,\R))\geq 1$ for $n\geq 3$, we have an immediate corollary to the proposition.

\begin{cor}
    For any $\lambda>0$ and $n\geq 3$, There exists infinitely many $\lambda$-strongly acyclic representations of $\SL(n,\R)$.
\end{cor}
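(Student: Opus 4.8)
The plan is to read this off directly from Proposition \ref{kstrong}. Since $\SL(n)$ is a connected semisimple algebraic group, the only thing to check is that its deficiency satisfies $\delta(\SL(n,\R)) \geq 1$ for every $n \geq 3$; once that is in hand, applying Proposition \ref{kstrong} with $G = \SL(n)$ and the prescribed $\lambda$ produces the desired infinitely many $\lambda$-strongly acyclic representations.

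For the deficiency I would recall that $\delta(G(\R)) = \rank G(\R) - \rank K$ is taken with respect to absolute ranks, i.e.\ dimensions of maximal tori of the complexifications. A maximal torus of $\SL(n)$ has dimension $n-1$, whereas a maximal torus of $\SO(n)$ has dimension $\lfloor n/2\rfloor$ (type $B_m$ or $D_m$), so
\begin{align*}
    \delta(\SL(n,\R)) = (n-1) - \left\lfloor \tfrac{n}{2}\right\rfloor = \left\lceil \tfrac{n}{2}\right\rceil - 1.
\end{align*}
This vanishes for $n = 2$ and equals at least $1$ as soon as $n \geq 3$, which is precisely the stated range; moreover it tends to infinity with $n$, so the hypothesis $\delta(G(\R)) \geq 1$ of Proposition \ref{kstrong} is comfortably satisfied.

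I do not anticipate any genuine obstacle: all of the substance is contained in Proposition \ref{kstrong}, and the corollary merely records that its hypothesis covers the case $G = \SL(n)$ that motivates the rest of the paper. The single point deserving attention is to read "rank" in the absolute sense, consistent with the convention in the paper (under which $\delta(G) = 1$ is the condition detecting the $L^2$-torsion phenomena); interpreting it as the real rank would change the computation --- one would then get $\delta(\SL(n,\R)) = n-1$ --- and hence alter the statement.
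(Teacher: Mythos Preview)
Your proposal is correct and follows essentially the same approach as the paper: the paper simply asserts that $\delta(\SL(n,\R))\geq 1$ for $n\geq 3$ and invokes Proposition \ref{kstrong}, while you additionally spell out the rank computation $\delta(\SL(n,\R)) = (n-1) - \lfloor n/2\rfloor$ that justifies this assertion.
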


\bigskip

\section{A bound on the heat kernel}\label{plancherel}
\noindent We continue with the same notation as the previous section, such that $G$ is a connected semisimple algebraic group. From now on, we assume that $\tau$ is an irreducible $\lambda$-strongly acyclic representation of $G(\R)$, with $\lambda>0$ to be chosen later. The goal of this section is to present an upper bound on the trace of the heat kernel for large $t$. This will be used to bound certain orbital integrals in Section \ref{asymptotics}.

\subsection*{Induced operators}

Let $(\pi,\mathcal{H}_\pi)$ be a unitary representation of $G(\R)$. For any $f\in \mathcal{C}(G(\R))$, the operator
\begin{align*}
    \pi(f) \coloneqq \int_{G(\R)} f(g)\pi(g)dg
\end{align*}
is a trace class operator on $\mathcal{H}_\pi$. If furthermore $f$ is left- and right $K$-finite, the operator is of finite rank.  We consider 
\begin{align*}
    \pi(H_t^{\tau,p}) \coloneqq \int_{G(\R)} \pi(g)\otimes H_t^{\tau,p}(g)dg
\end{align*} 
as a bounded operator acting on $\mathcal{H}_\pi\otimes \Lambda^p\mf{p}^*\otimes V_\tau$. Let $P$ be the orthogonal projection to the $K$-invariant subspace $(\mathcal{H}_\pi\otimes \Lambda^p\mf{p}^*\otimes V_\tau)^K$. We note that as $\pi$ and $\nu_{\tau,p}$ are unitary, so is their tensor product, and thus $P$ has the form
\begin{align*}
    P = \int_K \pi(k)\otimes v_{\tau,p}(k) dk.
\end{align*}
Using the covariance property, one easily checks that
\begin{align*}
    P\circ \pi(H_t^{\tau,p}) = \pi(H_t^{\tau,p})\circ P = \pi(H_t^{\tau,p}).
\end{align*}
Following the argument of (\cite{BM}, Corollary $2.2$), we can then use (\ref{casimirglobal}) to get
\begin{align}\label{heatcasimir}
    \pi(H_t^{\tau,p}) = e^{-t(\tau(\Omega)-\pi_{\sigma,i\nu}(\Omega))} P.
\end{align}
We will need the following lemma.
\begin{lem}\label{tracecommutes}
    Let $(\pi,\mathcal{H}_\pi)$ be an admissible unitary representation of $G(\R)$, and let $A:\mathcal{H}_\pi\to \mathcal{H}_\pi$ be a bounded operator. Then $(A\otimes 1)\circ \pi(H_t^{\tau,p})$ is of trace class, and
    \begin{align*}
        \Tr \left((A\otimes 1)\circ \pi(H_t^{\tau,p})\right) = \Tr\left(A\circ \pi(h_t^{\tau,p})\right),
    \end{align*}
    Here, the $\Tr$ on the left-hand side is the trace over ${\mathcal{H}_\pi\otimes\Lambda^p\mf{p}^*\otimes V_\tau}$, while on the right-hand side it is the trace over $\mathcal{H}_\pi$. 
\end{lem}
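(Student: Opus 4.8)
The plan is to reduce the statement to a standard fact about traces of operators on a tensor product where one factor is finite-dimensional, using the covariance relation and the presence of the projection $P$. First I would recall that by (\ref{heatkernelspace}) the kernel $H_t^{\tau,p}$ lies in $\mathcal{C}^q(G(\R))\otimes \End(\Lambda^p\mf{p}^*\otimes V_\tau)$ for every $q>0$, and that $h_t^{\tau,p}\in \mathcal{C}^q(G(\R))_{K\times K}$ is obtained by applying the fiberwise trace $\tr$ over $\Lambda^p\mf{p}^*\otimes V_\tau$. Since $\pi$ is admissible and $H_t^{\tau,p}$ is $K$-finite on both sides, $\pi(H_t^{\tau,p})$ has finite rank; indeed by (\ref{heatcasimir}) it equals $e^{-t(\tau(\Omega)-\pi(\Omega))}P$ with $P$ the orthogonal projection onto the finite-dimensional space $(\mathcal{H}_\pi\otimes\Lambda^p\mf{p}^*\otimes V_\tau)^K$, so $(A\otimes 1)\circ\pi(H_t^{\tau,p})$ is a bounded operator composed with a finite-rank operator, hence trace class; this settles the first claim.

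For the trace identity, I would write everything out as an integral. Expanding the definitions,
\begin{align*}
    (A\otimes 1)\circ \pi(H_t^{\tau,p}) = \int_{G(\R)} \bigl(A\pi(g)\bigr)\otimes H_t^{\tau,p}(g)\, dg,
\end{align*}
an operator on $\mathcal{H}_\pi\otimes(\Lambda^p\mf{p}^*\otimes V_\tau)$. The key point is that for operators of the form $B\otimes C$ with $B$ on $\mathcal{H}_\pi$ trace class and $C$ on the finite-dimensional space $\Lambda^p\mf{p}^*\otimes V_\tau$, one has $\Tr(B\otimes C) = \Tr(B)\cdot \tr(C)$; applying this fiberwise (i.e. integrating the scalar-valued identity in $g$) gives
\begin{align*}
    \Tr\bigl((A\otimes 1)\circ \pi(H_t^{\tau,p})\bigr) = \int_{G(\R)} \Tr\bigl(A\pi(g)\bigr)\, \tr H_t^{\tau,p}(g)\, dg = \int_{G(\R)} \Tr\bigl(A\pi(g)\bigr)\, h_t^{\tau,p}(g)\, dg = \Tr\bigl(A\circ \pi(h_t^{\tau,p})\bigr),
\end{align*}
where in the last step I use the definition $\pi(h_t^{\tau,p}) = \int_{G(\R)} h_t^{\tau,p}(g)\pi(g)\,dg$ together with boundedness of $A$ to pull $A$ through the integral, and $h_t^{\tau,p}\in\mathcal{C}^1(G(\R))$ so that $\pi(h_t^{\tau,p})$ is itself trace class and the manipulations are legitimate.

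The main obstacle is justifying that the fiberwise/pointwise identity $\Tr(B\otimes C)=\Tr(B)\tr(C)$ can be integrated against $g$ and that Fubini applies — i.e. that the integrals converge absolutely in trace norm. This is where the Schwartz-space membership (\ref{heatkernelspace}) and the admissibility of $\pi$ enter: the $K$-finiteness collapses the relevant operators to a fixed finite-dimensional block, so the integrand is, up to the fixed finite-dimensional factor, a scalar Schwartz function on $G(\R)$ integrated against the matrix coefficients of $\pi$ restricted to that block, and these are bounded (or, more carefully, of moderate growth controlled by the Schwartz decay), giving absolute convergence. Once this bookkeeping is in place the identity is immediate; I would present the $K$-finiteness reduction first precisely so that all subsequent trace computations take place in finite dimensions and no subtle trace-class estimate is actually needed.
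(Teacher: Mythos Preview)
Your approach is essentially the paper's: reduce to the finite-dimensional $K$-invariants via (\ref{heatcasimir}) and admissibility for the trace-class claim, then unwind the tensor trace---this is exactly what the cited argument in \cite{BM}, Lemma~5.1 does. One caution on exposition: the intermediate expression $\int_{G(\R)}\Tr(A\pi(g))\,h_t^{\tau,p}(g)\,dg$ is not well-defined as written, since $A\pi(g)$ is not trace class for individual $g$; your final paragraph correctly identifies that the $K$-finiteness reduction must come \emph{before} this step (so that all traces are over a fixed finite-dimensional block and the pointwise identity is legitimate), and you should reorganise the argument accordingly rather than presenting the formal manipulation first.
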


\begin{proof}
    By (\ref{heatcasimir}), we may restrict computing the trace of $(A\otimes 1)\circ \pi(H_t^{\tau,p})$ to computing it over $(\mathcal{H}_\pi\otimes \Lambda^p\mf{p}^*\otimes V_\tau)^K$, and as $\pi$ was assumed admissible, this is finite-dimensional, thus the trace is well defined and finite. The equality now follows by arguing as in the proof of (\cite{BM}, Lemma $5.1$).
\end{proof}

\subsection*{Application of the Plancherel formula}

Let $P=MAN$ be a real standard parabolic subgroup of $G(\R)$. Let $\mf{a}$ be the Lie algebra of $A$. We denote by $\langle\cdot,\cdot\rangle$ the inner product on the real vector space $\mf{a}^*$ induced by the Killing form, and $||\cdot||$ its associated norm. Fix a restricted positive root system of $\mf{a}$ and let $\rho_{\mf{a}}$ denote their half sum. Let $(\sigma,W_\sigma)$ be a discrete series representation of $M$, i.e. an irreducible unitary subrepresentation of the left regular representation of $M$ on $L^2(M)$, and let $i\nu\in i\mf{a}^*$. We denote by $(\pi_{\sigma,i\nu},\mathcal{H}^{\sigma,i\nu})$ the induced \emph{principal series representation} of $G$, defined by
\begin{align*}
    \mathcal{H}^{\sigma,i\nu}=\lbrace f:G\to W_\sigma \mid f(gman) = a^{-(\nu+\rho_{\mf{a}})}\sigma(m)^{-1}f(g) \\
    \:\forall g\in G(\R), man\in MAN,\:\: f|_{K}\in L^2(K,W_\sigma)&\rbrace, \\
    (\pi_{\sigma,\nu}(g)f)(x) = f(g^{-1} x).\hspace{16.8em}
\end{align*}
This is an irreducible unitary representation of $G(\R)$, in particular admissible, and thus by Lemma \ref{tracecommutes}, we have
\begin{align}\label{traceheat}
    \Tr(\pi_{\sigma,\nu}(H_t^{\tau,p})\pi_{\sigma,\nu}(g)) = \Tr(\pi_{\sigma,\nu}(h_t^{\tau,p})\pi_{\sigma,\nu}(g)).
\end{align}
As in the lemma, the traces are over the appropriate spaces. Applying (\ref{heatcasimir}) to the above, we get that 
\begin{align}\label{planchereltrace}
    \Tr(\pi_{\sigma,\nu}(h_t^{\tau,p})\pi_{\sigma,\nu}(g)) = e^{-t(\tau(\Omega)-\pi_{\sigma,i\nu}(\Omega))} \Tr(\pi_{\sigma,i\nu}^K(g)),
\end{align}
where we by $\pi_{\sigma,i\nu}^K(g)$ denote the operator $P(\pi_{\sigma,i\nu}(g)\otimes \text{Id})P$ on $\mathcal{H}^{\sigma,i\nu}\otimes \Lambda^p\mf{p}^*\otimes V_\tau$, with $P$ the projection onto the $K$-fixed vectors. We consider this as an operator on $(\mathcal{H}^{\sigma,i\nu}\otimes \Lambda^p\mf{p}^*\otimes V_\tau)^K$, as this subspace contains its image and it is $0$ elsewhere. This subspace is a finite-dimensional space, and by Frobenius reciprocity we have that
\begin{align*}
    \dim(\mathcal{H}^{\sigma,i\nu}\otimes \Lambda^p\mf{p}^*\otimes V_\tau)^K = \dim(W_\sigma\otimes \Lambda^p\mf{p}^*\otimes V_\tau)^{K_M},
\end{align*}
with $K_M\coloneqq K\cap M$. By virtue of being unitary, we have the inequality
\begin{align}\label{dimupper}
    |\Tr(\pi_{\sigma,i\nu}^K(g))|\leq \dim(W_\sigma\otimes \Lambda^p\mf{p}^*\otimes V_\tau)^{K_M}.
\end{align}
With this bound established, we turn to the scalar appearing in (\ref{planchereltrace}). Define a constant $c(\sigma)$ by
\begin{align*}
    c(\sigma) \coloneqq \sigma(\Omega_M)-||\rho_{\mf{a}}||.
\end{align*}
Then we have that (see \cite{Knapp}, Proposition $8.22$)
\begin{align*}
    \pi_{\sigma,i\nu}(\Omega) = c(\sigma)-||\nu||^2.
\end{align*}
Assume that $\dim(\mathcal{H}^{\sigma,i\nu}\otimes \Lambda^p\mf{p}^*\otimes V_\tau)^K\neq 0$. By the assumption that $\tau$ is $\lambda$-strongly acyclic, we immediately get that
\begin{align*}
    \tau(\Omega)-c(\sigma)\geq \lambda-||\nu||^2,
\end{align*}
and as this is true for all $\nu\in\mf{a}^*$, we get
\begin{align}\label{gap}
    \tau(\Omega)-c(\sigma)\geq \lambda.
\end{align}

\noindent Now we are ready to prove a large $t$ asymptotic for $h_t^{\tau,p}$. Using the Harish-Chandra Plancherel formula (see \cite{Olbrich}, Theorem $2.2$) and (\ref{traceheat}) we get that
\begin{align*}
    h_t^{\tau,p}(g) &= \sum_P\sum_{\sigma\in \hat{M}_d}\int_{\mf{a}^*}\Tr(\pi_{\sigma,i\nu}(h_t^{\tau,p})\pi(g^{-1}))p_\sigma(i\nu) d\nu \\
    &=e^{-t(\tau(\Omega)-c(\sigma))}\sum_P\sum_{\sigma\in \hat{M}_d}\int_{\mf{a}^*} e^{-t||\nu||^2}\Tr(\pi_{\sigma,i\nu}^K(g))p_\sigma(i\nu) d\nu,
\end{align*}
where $p_\sigma:i\mf{a}^*\to [0,\infty)$ is the Plancherel density, an analytic function of polynomial growth. The first sum runs over $P=MAN$ real standard parabolic subgroups, and the second over discrete series representations of $M$ the Levi subgroup of $P$. Since the trace inside the integral on the right-hand side is bounded absolutely by $\dim(W_\sigma\otimes \Lambda^p\mf{p}^*\otimes V_\tau)^{K_M}$, and this is non-zero only for finitely many pairs $(P,\sigma)$ (see \cite{Olbrich}, Corollary $2.3$), we may take the double sum to be finite. Furthermore, which pairs contribute is governed solely by $\tau$ and $p$. 

The latter integral is convergent and vanishing for $t\to\infty$, and so for $t\geq 1$ it may be bounded independently of $t$. Putting these observations together and applying (\ref{gap}), we get the following result.
\begin{prop}\label{traceheatdecay}
    Assume $t\geq 1$. Then there exists a constant $C>0$ only depending on $\tau$ and $p$ such that for any $g\in G(\R)$, we have
    \begin{align*}
        | h_t^{\tau,p}(g)|\leq C\,e^{-\lambda t}.
    \end{align*}
\end{prop}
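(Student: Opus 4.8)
The plan is to extract the bound directly from the Harish-Chandra Plancherel expansion of $h_t^{\tau,p}(g)$ that has just been set up, and then estimate each factor. First I would recall the key identity derived above, namely
\begin{align*}
    h_t^{\tau,p}(g) = \sum_{(P,\sigma)} e^{-t(\tau(\Omega)-c(\sigma))}\int_{\mf{a}^*} e^{-t\|\nu\|^2}\Tr(\pi_{\sigma,i\nu}^K(g))\,p_\sigma(i\nu)\,d\nu,
\end{align*}
where, by the discussion preceding the proposition, the sum over pairs $(P,\sigma)$ may be taken to be \emph{finite}, with the set of contributing pairs depending only on $\tau$ and $p$ (via the non-vanishing of $(\mathcal{H}^{\sigma,i\nu}\otimes\Lambda^p\mf{p}^*\otimes V_\tau)^K$, equivalently of $(W_\sigma\otimes\Lambda^p\mf{p}^*\otimes V_\tau)^{K_M}$).

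Next I would bound the three ingredients separately. For the scalar prefactor, inequality (\ref{gap}) gives $\tau(\Omega)-c(\sigma)\geq\lambda$ for every contributing pair, so $e^{-t(\tau(\Omega)-c(\sigma))}\leq e^{-\lambda t}$ for $t\geq 0$ (here using $t\geq 1>0$). For the integrand, (\ref{dimupper}) gives the uniform bound $|\Tr(\pi_{\sigma,i\nu}^K(g))|\leq \dim(W_\sigma\otimes\Lambda^p\mf{p}^*\otimes V_\tau)^{K_M}$, a constant independent of $g$, $t$ and $\nu$; combined with the polynomial growth of the Plancherel density $p_\sigma$, the function $\nu\mapsto e^{-t\|\nu\|^2}\Tr(\pi_{\sigma,i\nu}^K(g))p_\sigma(i\nu)$ is dominated for all $t\geq 1$ by the integrable function $e^{-\|\nu\|^2}$ times a polynomial in $\nu$. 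Hence there is a constant $C_{P,\sigma}$, depending only on $\sigma$ (so only on $\tau$ and $p$), with
\begin{align*}
    \left|\int_{\mf{a}^*} e^{-t\|\nu\|^2}\Tr(\pi_{\sigma,i\nu}^K(g))\,p_\sigma(i\nu)\,d\nu\right| \leq C_{P,\sigma}
\end{align*}
for all $t\geq 1$ and all $g$. Summing the finitely many terms and setting $C\coloneqq\sum_{(P,\sigma)}C_{P,\sigma}$ yields $|h_t^{\tau,p}(g)|\leq C e^{-\lambda t}$, with $C$ depending only on $\tau$ and $p$.

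There is no serious obstacle here since the analytic work was done in setting up (\ref{planchereltrace})--(\ref{gap}); the one point requiring a little care is the uniformity of the $\nu$-integral bound in $t$ for $t\geq 1$, which I would handle by the dominated-convergence / uniform-domination remark above (the Gaussian $e^{-t\|\nu\|^2}\le e^{-\|\nu\|^2}$ for $t\ge1$ kills the polynomial growth of $p_\sigma$ uniformly), rather than by any more delicate estimate. The finiteness of the pair sum, which is what makes the constant $C$ well-defined and dependent only on $\tau$ and $p$, is exactly the content of (\cite{Olbrich}, Corollary $2.3$) quoted just above, so it may be invoked directly.
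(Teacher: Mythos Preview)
Your proposal is correct and follows essentially the same approach as the paper: expand $h_t^{\tau,p}(g)$ via the Harish-Chandra Plancherel formula, use (\ref{gap}) to extract $e^{-\lambda t}$ from the Casimir prefactor, bound the trace by (\ref{dimupper}), invoke finiteness of the contributing $(P,\sigma)$ via \cite{Olbrich}, and bound the remaining $\nu$-integral uniformly for $t\ge 1$. Your explicit domination $e^{-t\|\nu\|^2}\le e^{-\|\nu\|^2}$ for $t\ge 1$ is in fact a touch more precise than the paper's phrasing, which simply observes that the integral is convergent and decreasing in $t$, hence bounded for $t\ge 1$.
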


\noindent Later, we will also need a vanishing behaviour of $h_t^{\tau,p}(g)$ in terms of $g$. This has been explored in \cite{LéM} for the heat kernel associated to Laplace operators on standard $p$-forms. By following the proof of their Theorem $3.1$, adapted to our setting, we get a refined bound on the heat kernel, which turns into the following bound on its trace. Let $r(g)=d(gK,K)$ be the geodesic distance from $gK$ to $K$ on $G(\R)/K$. Then there exists constants $A,c,C>0$ such that
\begin{align}\label{vanishin|g|}
    |h_t^{\tau,p}(g)|\leq C\, e^{-\lambda t}\, e^{-c\frac{r(g)^2}{t}}
\end{align}
for all $t>1$ and $g\in G(\R)$ satisfying $r(g) > A$.

\bigskip

\section{Analysis of the trace formula}\label{trace formula}

\subsection*{Review of the geometric side of the trace formula}\label{tracereview}

Let $G$ be a reductive algebraic group over $\Q$ with $G(\R)$ noncompact. Fix a minimal parabolic subgroup $P_0$ of $G$ with a Levi decomposition $P_0=M_0 N_0$. For $\GL(n)$, we pick as minimal parabolic subgroup the subgroup of upper triangular matrices, with $M_0$ the diagonal matrices in $G$. We set $\mathcal{F}$ to be the set of parabolic subgroups of $G$ defined over $\Q$ containing $M_0$. We let $\mathcal{L}$ denote the set of subgroups in $G$ containing $M_0$ that are also Levi components of some group in $\mathcal{F}$. Furthermore, any $L\in\mathcal{L}$ is a reductive group, and for $M\in\mathcal{L}$ a Levi subgroup we shall denote by $\mathcal{L}^L(M)$ the set of Levi subgroups in $L$ containing $M$. Finally we will write $\mathcal{P}^L(M)$ for the set of parabolic subgroups of $L$ for which $M$ is a Levi component. If $L=G$, we drop the superscript and write $\mathcal{L}(M)$ and $\mathcal{P}(M)$.

Let $\A_f$ be the finite adeles over $\Q$. Given $K_f\subset G(\A_f)$ an open compact subgroup, we can define the adelic Schwartz space $\mathcal{C}(G(\A)^1,K_f)$ as the space of smooth right $K_f$-invariant functions on $G(\A)^1$ all of whose derivatives lies in $L^1(G(\A)^1)$. We denote by $\mathcal{C}(G(\A)^1)$ the union of $\mathcal{C}(G(\A)^1,K_f)$ over all such $K_f$.

For $f\in C_c^\infty(G(\A)^1)$, let $J_{\geo}(f)$ be the geometric side of the Arthur trace formula (see \cite{Arthur2}). This is a distribution with test function $f$. We give a very brief sketch of its construction (see \cite{Arthur0} for an excellent introduction). In essence, we wish to integrate the function
\begin{align}\label{naivekernel}
    K(x,y) = \sum_{\gamma\in G(\Q)}f(x^{-1}\gamma y), \qquad x\in G(\A)^1
\end{align}
over $G(\Q)\backslash G(\A)^1$. In our non-compact case however, this function is often not integrable, and lacks some of our desired properties. Correction terms must be added, and it turns out to be a good idea to add one for each parabolic subgroup. Let $P\in \mathcal{F}$ with canonical Levi decomposition $P=M_PN_P$, i.e. such that $M_P\in \mathcal{L}(M_0)$. We then define
\begin{align*}
    K_P(x,y)\coloneqq\int_{N_P(\A)} \sum_{\gamma\in M_P(\Q)}f(x^{-1}\gamma n y) dn, \qquad x,y\in G(\A)^1.
\end{align*}
From this, one constructs a kernel function $k^T(x,f)$ as a sum over standard parabolic subgroups and with a truncation parameter $T$ which serves as a replacement for the function (\ref{naivekernel}), see (\cite{Arthur2}) for a precise definition. One then defines $J^T(f)$ as the $G(\Q)\backslash G(\A)^1$-integral of $k^T(f,x)$. Finally, one picks a particular truncation parameter $T=T_0$ and defines $J(f)\coloneqq J^{T_0}(f)$. In the case of $G=\GL(n)$, the canonical choice is $T_0 = 0$. 

Recall that any element of an algebraic group has a Jordan decomposition, i.e. for any $g\in G(k)$ with $k$ a perfect field, we have a decomposition $g=g_sg_u=g_ug_s$ with $g_s$ semisimple and $g_u$ unipotent. The terms \textit{semisimple} and \textit{unipotent} simply mean that their image has this property under some (equivalently, any) closed embedding $G\to \GL(n)$. Now, define an equivalence relation on $G(\Q)$: Say that two elements are equivalent if their semisimple parts are $G(\Q)$-conjugate to each other. Let $\mathcal{O}$ be the set of equivalence classes in $G(\Q)$. Then for $\mf{o}\subset \mathcal{O}$, one could also consider the function
\begin{align}\label{classkernel}
    K_{P,\mf{o}}(x,y)\coloneqq\int_{N_P(\A)} \sum_{\gamma\in M_P(\Q)\cap \mf{o}}f(x^{-1}\gamma n y) dn, \qquad x,y\in G(\A)^1.
\end{align}
From this, one can again construct a modified kernel function $k_{\mf{o}}(x,f)$ as above, and define another distribution with test function $f$ analogously that we will denote $J_\mf{o}(f)$ (see \textit{loc. sit.}). We see that we have an equality
\begin{align*}
    K_P(x,y) = \sum_{\mf{o}\in\mathcal{O}}K_{P,\mf{o}}(x,y).
\end{align*}
This equality directly extends to the identity on kernel functions $k^T(f,x)=\sum_{\mf{o}\in \mathcal{O}}k^T_{\mf{o}}(f,x)$ once proper definitions are given. The content of the \emph{coarse geometric expansion} is the fact that the analogous identity holds true for the associated distributions. Precisely, it is the equality
\begin{align}\label{coarsegeom}
    J_{\geo}(f) = \sum_{\mf{o}\in\mathcal{O}}J_{\mf{o}}(f).
\end{align}

\noindent The coarse geometric expansion has been extended to the domain $\mathcal{C}(G(\A)^1,K_f)$ in (\cite{FL}), which will be important for our applications.

We note that the distributions $J_{\mf{o}}(f)$ have a formulation in terms of integrals of $f(x^{-1}\gamma x)$, for $\gamma\in\mf{o}$ and $x\in G(\A)$ (e.g. \cite{Arthur5}, Theorem $8.1$). In particular, $J_{\mf{o}}(f)$ is non-zero only if the union of $G(\A)$-conjugacy classes of $\mf{o}$ intersects the support of $f$. This will be used in Section \ref{compactsub} to show that in some cases, only one particular class contribute.

\subsection*{Analytic torsion}\label{analytictorsionsection} We assume that $G$ is $\GL(n)$ or $\SL(n)$, and let $K_f$ and $X(K_f)$ be defined as in Section \ref{arithmfd}, with $K_f$ neat. We denote by $h_t^{\tau,p}$ the trace of the heat kernel as defined in (\ref{trheat}). Set $1_{K_f}$ as the indicator function of $K_f$ on $G(\A_f)$, and define
\begin{align*}
    \chi_{K_f}\coloneqq\frac{1_{K_f}}{\vol(K_f)}.
\end{align*} Then $h_t^{\tau,p}\otimes \chi_{K_f}\in \mathcal{C}(G(\A)^1,K_f)$. Following (\cite{MzM1}), we define the regularized trace of the heat operator as
\begin{align*}
    \Trreg\left(e^{-t\Delta_p(\tau)}\right)\coloneqq J_{\geo}\left(h_t^{\tau,p}\otimes \chi_{K_f}\right).
\end{align*}

\noindent If $X(K_f)$ is compact, the heat operator is of trace class, and the regularized trace defined above is then equal to the usual trace. We define the associated spectral zeta functions $\zeta_p(s,\tau)$ as in the compact case (\ref{zeta}), replacing the usual trace with the regularized version. Absolute convergence and existence of meromorphic continuation was shown in (\cite{MzM1}). However, unlike the compact case, the zeta function might have a pole at $s=0$, and we have to be slightly more careful. For a meromorphic function $f(s)$ on $\C$ and $z\in \C$, we define $\FP_{s=a}f$ as the zeroth coefficient of the Laurent expansion of $f(s)$ at $s=a$. Analytic torsion of $X(K_f)$ is then defined by
\begin{align}\label{torsion}
    \log T_{X(K_f)}(\tau) = \frac12\sum_{p=0}^d(-1)^p\;p\; \FP_{s=0}\left(\frac{\zeta_p(s,\tau)}{s}\right).
\end{align}

\subsection*{Congruence quotients of $\GL(n)$ and $\SL(n)$}\label{congquot}

We stay in the same setup as the previous subsection. Recall that if $K_f$ is neat, $X(K_f)$ is a locally symmetric manifold of finite volume. This assumption is true in the primary setting of this paper that we now explain. Let $N\in\N$, $N\geq 3$ and define 
\begin{align}\label{glnprincipalcongruencesubgroup}
    K(N)\coloneqq\prod_p K_p\left(p^{\nu_p(N)}\right) \subset \GL(n,\A_f),
\end{align}
where $K_p(p^e)$ is the kernel of the canonical map $\GL(n,\Z_p)\to \GL(n,\Z/p^e\Z)$. Then $K(N)$ is an open compact subgroup of $\GL(n,\A_f)$. We further define $K'(N)$ to be the completely analogous subgroup for $\SL(n)$. Pick the maximal compact subgroups of $\GL(n,\R)^1$ and $\SL(n,\R)$ to be $\text{O}(n)$, respectively $\SO(n)$. We now set $Y(N)$ to be the adelic symmetric space for $\GL(n)$ associated to $K(N)$ as in (\ref{adelicsym}), and similarly let $X(N)$ be the space for $\SL(n)$ associated to $K'(N)$.

It follows from Section \ref{arithmfd} that $X(N)\cong \Gamma(N)\backslash \SL(n,\R)/\SO(n)$ with $\Gamma(N)$ the standard principal congruence subgroup of $\SL(n,\Z)$ of level $N$, and also that $Y(N)$ is a disjoint union of $\phi(N)$ many copies of $X(N)$. Thus, it is reasonable to also call $N$ the level of the subgroup $K(N)$. In particular, we get
\begin{align}\label{volglsl}
    \vol(Y(N)) = \phi(N)\vol(X(N)).
\end{align}
We can consider the associated analytic torsion as defined in (\ref{torsion}) in both of these settings. As one would hope for, by (\cite{MzM2}, ($11.5$)) we have
\begin{align}\label{torsionglsl}
    \log T_{Y(N)}(\tau) = \phi(N)\log T_{X(N)}(\tau).
\end{align}
Combining (\ref{volglsl}), (\ref{torsionglsl}) and (\ref{l2torsion}), we see that Theorem \ref{mytheorem} is equivalent to the following.
\begin{thm}\label{glmytheorem}
    Assume $\tau$ is a $\lambda$-strongly acyclic representation of $\GL(n,\R)^1$, for a certain $\lambda$ depending only on $n$. Then there exists some $a>0$ such that
    $$\log T_{Y(N)}(\tau) = \log T_{Y(N)}^{(2)}(\tau)+O(\vol(Y(N))N^{-(n-1)}\log(N)^a)$$
    as $N$ tends to infinity.
\end{thm}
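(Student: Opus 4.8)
\textbf{Proof plan for Theorem \ref{glmytheorem}.}

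The plan is to follow the strategy of \cite{MzM2}, but to let the two auxiliary parameters entering that argument---the compactification parameter used to replace the non-compactly-supported test function $h_t^{\tau,p}\otimes\chi_{K(N)}$ by a compactly supported one, and the truncation parameter $T$ in (\ref{trunc})---depend on the level $N$, and to track carefully how every error term scales in $N$. The starting point is the spectral-side identity expressing $\log T_{Y(N)}(\tau)$ in terms of $\Trreg(e^{-t\Delta_p(\tau)}) = J_{\geo}(h_t^{\tau,p}\otimes\chi_{K(N)})$ via (\ref{torsion}), together with the coarse geometric expansion (\ref{coarsegeom}), valid on $\mathcal{C}(G(\A)^1,K(N))$ by \cite{FL}. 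The $L^2$-torsion $\log T_{Y(N)}^{(2)}(\tau)$ equals the contribution of the identity orbital integral (the term $\mf{o}=\{1\}$, identity part), scaled by $\vol(Y(N))$ via (\ref{l2torsion}), so the theorem amounts to bounding the sum of all the remaining contributions to $J_{\geo}$---the non-identity semisimple classes, the non-identity unipotent part of $\mf{o}=\{1\}$, and the weighted/parabolic correction terms---after integrating against $t^{-1}$ over $t\in(0,\infty)$ in the zeta-regularization.

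The key steps, in order: (i) Split the $t$-integral defining $\zeta_p(s,\tau)$ at $t=1$. For small $t$, reuse the small-time analysis of \cite{MzM2} verbatim, checking that the $N$-dependence of the error coming from the non-identity terms is $O(\vol(Y(N))N^{-(n-1)}\log(N)^a)$; here the gain in $N$ comes from the fact that the congruence condition $K(N)$ forces any contributing non-identity conjugacy class to have entries of size growing with $N$, shrinking the relevant orbital integrals polynomially in $N$. (ii) For large $t$, invoke Proposition \ref{traceheatdecay}: since $\tau$ is $\lambda$-strongly acyclic, $|h_t^{\tau,p}(g)|\le C e^{-\lambda t}$ uniformly in $g$, so the orbital integrals and weighted orbital integrals appearing in $J_{\mf{o}}(h_t^{\tau,p}\otimes\chi_{K(N)})$ for $t\ge 1$ decay exponentially; choosing $\lambda$ large enough (depending only on $n$, so that it beats the polynomial volume growth of the relevant truncated regions and the polynomial-in-$T$ growth of Arthur's weight factors) makes the large-$t$ tail contribute an error that is exponentially small in $T$, hence negligible after optimizing $T=T(N)\sim c\log N$. (iii) Carry out the asymptotic analysis of the archimedean local (weighted) orbital integrals as in Section \ref{asymptotics}, now with $T$ growing like $\log N$, keeping the polynomial-in-$\log N$ dependence explicit---this is what produces the $\log(N)^a$ factor. (iv) Assemble: the identity term gives exactly $\log T_{Y(N)}^{(2)}(\tau)$, and everything else is $O(\vol(Y(N))N^{-(n-1)}\log(N)^a)$.

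The main obstacle I expect is step (iii) combined with the interplay in step (ii): making the error bounds in the analysis of the weighted orbital integrals and the truncation procedure \emph{uniform} in both $N$ and $T$ simultaneously, so that one may legitimately let $T$ grow with $N$. In \cite{MzM2} the parameter $T$ is fixed, and many estimates there hide $T$-dependence inside implied constants; extracting the precise polynomial-in-$T$ (equivalently, polynomial-in-$\log N$) growth of every error term, and checking that the exponential decay $e^{-\lambda t}$ from Proposition \ref{traceheatdecay} genuinely dominates all of these once $\lambda$ is taken large enough in terms of $n$, is the delicate technical heart of the argument. A secondary subtlety is ensuring that the non-identity unipotent contributions---which, as noted in the introduction, are present precisely because $Y(N)$ is non-compact---are controlled at the claimed rate $N^{-(n-1)}$ rather than a weaker power; this requires a careful count of how the level-$N$ congruence condition constrains unipotent conjugacy classes meeting $K(N)$.
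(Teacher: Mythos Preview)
Your plan follows the same broad strategy as the paper, but two specific allocations of work are off, and if you proceeded as written you would hit real obstacles.

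First, the handling of the large-$t$ tail. You write ``split at $t=1$'' in step (i) but then optimize $T=T(N)\sim c\log N$ in step (ii); the paper splits at a variable $T=\beta\log N$ from the outset, and this is essential because the truncation error is $O(e^{-\lambda(1-\epsilon)T}\vol(Y(N)))$ and must beat $N^{-(n-1)}$. More importantly, the tail $\int_T^\infty\Trreg(e^{-t\Delta_p(\tau)})t^{-1}\,dt$ is \emph{not} bounded by applying Proposition~\ref{traceheatdecay} to the geometric side as you propose in step (ii): without compactification the coarse expansion is a sum over infinitely many classes $\mf{o}$, and the pointwise bound $|h_t^{\tau,p}(g)|\le Ce^{-\lambda t}$ does not control that sum. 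The paper instead invokes the spectral-side estimate $|J_{\spec}(h_t^{\tau,p}\otimes\chi_{K(N)})|\le Ce^{-ct}\vol(Y(N))$ from \cite{MzM2} together with $J_{\geo}=J_{\spec}$. Proposition~\ref{traceheatdecay} is used for a narrower purpose: after compactifying on $[0,T]$ only unipotent classes remain (finitely many archimedean weighted orbital integrals), and one must extend each of these from $\int_0^T$ back to $\int_0^\infty$; there the proposition supplies the exponential decay of the integrand (the errors $E_2$, $E_3$ in Section~\ref{conclusion}).

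Second, you conflate two distinct mechanisms for the $N$-saving. Compactifying to radius $R=C_n\log N$ does not ``shrink the relevant orbital integrals polynomially in $N$'' for non-identity semisimple classes---it makes them \emph{vanish identically} (Proposition~\ref{compactunip}), since any $\gamma$ with $\gamma_{ss}\neq 1$ and finite part in $K(N)$ has archimedean part at distance $\gg\log N$ from the identity. The power saving $N^{-(n-1)}$ comes from a completely separate source: the $p$-adic weighted orbital integrals $J_M^{L_2}(\chi_{K(N),Q},\mf{u}_{\fin})$ for non-identity \emph{unipotent} classes, bounded in Lemma~\ref{localbound} by $N^{-\dim^G_M\mf{u}/2}(\log N)^d\vol(K(N))^{-1}$ with $\dim^G_M\mf{u}/2\ge n-1$ whenever $(M,\mf{u})\neq(G,\{1\})$. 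So what you call the ``secondary subtlety'' is in fact the primary source of the exponent, and it is settled by that $p$-adic computation already in \cite{MzM2}, not by an archimedean argument; the archimedean side contributes only the $(\log N)^a$ factor via the growth of the integrals in $R$ (Proposition~\ref{orbitasymp}).
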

\noindent The remainder of the paper is essentially a proof of this theorem.

\subsection*{Compactification of the test function}\label{compactsub}

\noindent We continue with the same setup, and assume that $G=\GL(n)$ and $K_f=K(N)$, $N\geq 3$. Let $r(g)=d(K,gK)$ be the geodesic distance of $g\in G(\R)^1$ from the identity on $\Tilde{X}=G(\R)^1/K$. We will often write $d(I,g)$ for the same expression. Let $\phi_R:G(\R)^1\to [0,1]$ be a smooth function identically $1$ on $B(R)=\lbrace g\in G(\R)^1\mid r(g)< R\rbrace$ and identically $0$ outside $B(R+\epsilon)$ for some small $\epsilon>0$. We now define
\begin{align*}
    h_{t,R}^{\tau,p}(g)\coloneqq \phi_R(g)h_t^{\tau,p}(g).
\end{align*}

\noindent In a moment, we will need this small lemma on this distance function.

\begin{lem}\label{distancebound}
    Let $||\cdot||$ denote the Frobenius norm on $M_{n\times n}(\R)$. Then
    \begin{align*}
        r(g) \geq \log ||g||, \quad \forall g\in G(\R)^1.
    \end{align*}
\end{lem}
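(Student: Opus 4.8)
The plan is to unwind the definitions. The geodesic distance on $\Tilde{X}=\GL(n,\R)^1/K$, where $K=\SO(n)$, is computed via the Cartan/polar decomposition: writing $g=k_1 a k_2$ with $k_1,k_2\in K$ and $a=\diag(e^{t_1},\dots,e^{t_n})$ a diagonal matrix with $\sum t_i=0$ (so $g\in G(\R)^1$), the distance $r(g)=d(K,gK)$ equals $\left(\sum_i t_i^2\right)^{1/2}$ up to the normalization of the metric induced by the Killing form (or trace form). Since the Frobenius norm is bi-invariant under $K$ (orthogonal conjugation preserves it), we have $\|g\|=\|a\|=\left(\sum_i e^{2t_i}\right)^{1/2}$.

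So first I would reduce, by $K\times K$-invariance of both $r(\cdot)$ and $\|\cdot\|$, to the case $g=a=\diag(e^{t_1},\dots,e^{t_n})$ with $\sum_i t_i = 0$. Then the inequality to prove becomes
\begin{align*}
    \left(\sum_{i=1}^n t_i^2\right)^{1/2} \;\geq\; \log\left(\sum_{i=1}^n e^{2t_i}\right)^{1/2} \;=\; \tfrac12\log\left(\sum_{i=1}^n e^{2t_i}\right).
\end{align*}
The second step is to bound the right-hand side: let $M=\max_i t_i$, so that $\sum_i e^{2t_i}\leq n\,e^{2M}$, giving $\tfrac12\log\left(\sum_i e^{2t_i}\right)\leq M+\tfrac12\log n$. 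The third step is to bound the left-hand side from below: since $\sum_i t_i=0$, the maximum $M$ is nonnegative, and moreover $M\leq \left(\sum_i t_i^2\right)^{1/2}$ trivially (one coordinate is at most the full Euclidean norm). That already gives $r(g)\geq M$, which is almost what we want but off by the additive $\tfrac12\log n$; I would need to be slightly more careful. A cleaner route: with $\sum_i t_i=0$ and $M=\max_i t_i\geq 0$, one has $\sum_i e^{2t_i}\leq e^{2M}\cdot n$ but also, more usefully, one can use that the negative $t_i$'s contribute little — actually the sharp statement is $\sum_i e^{2t_i}\le$ something like $e^{2\|t\|}$ when $\|t\|$ is not too small, and for small $\|t\|$ one checks the inequality directly since $r(g)\geq 0$ always and $\log\|g\|$ can be bounded. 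In fact the most robust argument: by convexity $\log\left(\sum_i e^{2t_i}\right)$ — hmm, let me instead just use $\|g\|^2 = \sum_i e^{2t_i} \le \left(\sum_i e^{t_i}\right)^2$... no. The honest approach is: $\|g\|_{\mathrm{op}} = e^{M} \le e^{\|t\|} = e^{r(g)}$ for the operator norm, and then compare Frobenius to operator norm, $\|g\| \le \sqrt{n}\,\|g\|_{\mathrm{op}}$, which reintroduces the $\sqrt n$. So the genuinely clean statement uses the operator norm; with the Frobenius norm there may be a harmless constant, OR the intended metric normalization absorbs it.

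Given the stated inequality has no constant, I suspect the paper's metric normalization (Killing form vs. trace form scaling) is chosen precisely so that $r(g) = \left(2\sum_i t_i^2\right)^{1/2}$ or similar, making the inequality have room to spare; alternatively, the bound really follows from $r(g)\ge \log\|g\|_{\mathrm{op}}$ together with $\|g\|_{\mathrm{op}}\le\|g\|$ (Frobenius dominates operator norm), which is elementary. I would present it as: reduce to diagonal $a$ by bi-$K$-invariance; observe $\|a\|_{\mathrm{op}} = e^{\max_i t_i}$ and $r(a) = |t| \geq \max_i t_i$ (since $\sum t_i = 0$ forces $\max_i t_i \le |t|$, indeed $\max_i t_i\le |t|$ always); hence $r(a)\ge\log\|a\|_{\mathrm{op}}$; finally $\|a\|_{\mathrm{op}}\le\|a\|_{\mathrm{Frob}} = \|g\|$, so $r(g)\ge\log\|g\|$. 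The main obstacle, and the only thing requiring care, is pinning down the metric normalization so that $r(a)$ is exactly $|t|$ (not $|t|$ up to a factor $<1$); once that normalization is fixed — which the paper does implicitly via the Killing form — every step is a one-line estimate, and the operator-norm trick cleanly avoids any spurious $\sqrt n$.
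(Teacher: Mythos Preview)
Your overall strategy---reduce to the diagonal case via the Cartan decomposition $g=k_1ak_2$ and use that $r(a)=|t|$ for $a=\diag(e^{t_1},\dots,e^{t_n})$---matches the paper's exactly: the paper phrases it as $d(I,e^X)=\|X\|$ for symmetric $X$ (citing Bridson--Haefliger) and then invokes the submultiplicativity bound $e^{\|X\|}\geq\|e^X\|$, followed by the same bi-$K$-invariance argument.

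There is, however, a genuine error in your final step. You establish $r(a)\geq\max_i t_i=\log\|a\|_{\mathrm{op}}$, and then write ``$\|a\|_{\mathrm{op}}\leq\|a\|_{\mathrm{Frob}}=\|g\|$, so $r(g)\geq\log\|g\|$.'' But the inequality $\|a\|_{\mathrm{op}}\leq\|a\|_{\mathrm{Frob}}$ gives $\log\|a\|_{\mathrm{op}}\leq\log\|a\|_{\mathrm{Frob}}$, which points the \emph{wrong way}: from $r(a)\geq\log\|a\|_{\mathrm{op}}$ you cannot conclude $r(a)\geq\log\|a\|_{\mathrm{Frob}}$. Your earlier suspicion that a $\sqrt{n}$ is unavoidable with the Frobenius norm was correct, and the operator-norm detour does not evade it. Indeed the inequality as literally stated fails at $g=I$, where $r(I)=0$ but $\log\|I\|_{\mathrm{Frob}}=\tfrac12\log n>0$ for $n\geq 2$; the paper's ``basic inequality $e^{\|X\|}\geq\|e^X\|$'' has the same defect at $X=0$ for the Frobenius norm (it holds for the operator norm, where $\|I\|=1$). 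The honest statement is $r(g)\geq\log\|g\|-\tfrac12\log n$, which follows from your operator-norm bound together with $\|g\|_{\mathrm{Frob}}\leq\sqrt{n}\,\|g\|_{\mathrm{op}}$, and this weakened form suffices for both places the lemma is used in the paper.
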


\begin{proof}
    By (\cite{bridsonhaefliger}, II. Corollary 10.42(2)), we have that $d(I,e^X) = ||X||$ for $X$ a symmetric matrix. Further, by the basic inequality $e^{||X||}\geq ||e^X||$, we see that
\begin{align*}
    d(I,e^X)\geq \log||e^X||.
\end{align*}
Since both the metric and the Frobenius norm are invariant under multiplication by orthogonal matrices, this inequality holds when replacing $e^X$ with any $g\in G(\R)^1$, using the Cartan decomposition $G(\R)^1=KAK$ with $A$ the set of diagonal matrices in $G(\R)^1$.
\end{proof}

\noindent Let $J_{\unip}$ denote the distribution defined in Section \ref{tracereview} associated to the equivalence class of elements with semisimple part being the identity, i.e. the unipotent elements. Replacing $h_t^{\tau,p}$ with $h_{t,R}^{\tau,p}$ allows us to reduce the geometric side of the Arthur trace formula to only the unipotent contribution, if we keep $R$ small relative to the level $N$. This is the content of the following theorem.

\begin{prop}\label{compactunip}
    For $N$ large enough and $R\leq C_n\log N$, the constant $C_n>0$ only depending on $n$, we have that
    \begin{align*}
        J_{\geo}(h_{t,R}^{\tau,p}\otimes\chi_{{X(N)}}) = J_{\unip}(h_{t,R}^{\tau,p}\otimes\chi_{{X(N)}}).
    \end{align*}
\end{prop}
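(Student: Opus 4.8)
The plan is to exploit the formulation of each $J_{\mf{o}}$ recalled at the end of Section~\ref{tracereview}: by (\ref{coarsegeom}) it suffices to show that $J_{\mf{o}}(h_{t,R}^{\tau,p}\otimes\chi_{X(N)})=0$ for every class $\mf{o}\in\mathcal{O}$ other than the unipotent one. Since $J_{\mf{o}}(f)$ is built from integrals of $f(x^{-1}\gamma x)$ with $\gamma$ ranging over $\mf{o}$ and $x\in G(\A)$, it vanishes unless the union of $G(\A)$-conjugacy classes making up $\mf{o}$ meets the support of $f=h_{t,R}^{\tau,p}\otimes\chi_{X(N)}$. The test function is supported on $\{g_\infty\in G(\R)^1: r(g_\infty)\le R+\epsilon\}\times K(N)$, so I must show: if $\gamma\in\GL(n,\Q)$ has non-central semisimple part and is $G(\A)$-conjugate to an element of this support set, then $R\gtrsim\log N$, contradicting $R\le C_n\log N$ once $C_n$ is chosen small.

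The first step is the archimedean estimate. If some conjugate $x_\infty^{-1}\gamma x_\infty$ lies in $B(R+\epsilon)$, then by Lemma~\ref{distancebound} we get $\log\|x_\infty^{-1}\gamma x_\infty\|\le R+\epsilon$; in particular the eigenvalues of $\gamma$ (which are conjugation-invariant) and more robustly the coefficients of its characteristic polynomial are bounded in terms of $e^{R}$. So the characteristic polynomial of $\gamma$ has rational coefficients of archimedean size $O(e^{cR})$ for a dimension constant $c$. The second step is the non-archimedean constraint coming from the level: at each finite place $p$, the conjugate $x_p^{-1}\gamma x_p$ lies in $K_p(p^{\nu_p(N)})$, i.e. it is congruent to the identity modulo $p^{\nu_p(N)}$. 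Since the characteristic polynomial is again conjugation-invariant, the characteristic polynomial of $\gamma$ is congruent to $(T-1)^n$ modulo $N$ — equivalently, writing $\chi_\gamma(T)=\prod(T-1)^n+\sum a_i(T-1)^i$ (or just comparing with $(T-1)^n$ coefficientwise), all the nontrivial coefficients are divisible by $N$.

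Combining the two: the "deviation of $\chi_\gamma$ from $(T-1)^n$" is an integer vector each of whose entries is divisible by $N$ and of archimedean absolute value $O(e^{cR})$. If $R\le C_n\log N$ with $C_nc<1$, then $e^{cR}\le N^{C_nc}<N$ for $N$ large, which forces every such entry to be $0$, i.e. $\chi_\gamma(T)=(T-1)^n$. Hence all eigenvalues of $\gamma$ equal $1$, so the semisimple part $\gamma_s$ is unipotent as well as semisimple, therefore $\gamma_s=I$ and $\gamma$ is unipotent, i.e. $\mf{o}=\Unip$. Thus every non-unipotent class contributes $0$ and (\ref{coarsegeom}) collapses to the single term $J_{\unip}(h_{t,R}^{\tau,p}\otimes\chi_{X(N)})$, as claimed.

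The main obstacle is making the support statement about $J_{\mf{o}}$ fully precise across all classes: the clean description "$J_{\mf{o}}(f)\ne 0\implies$ (conjugacy classes of $\mf{o}$) $\cap\,\mathrm{supp}(f)\ne\varnothing$" needs the Schwartz-function version (via \cite{FL}) rather than just $C_c^\infty$, and one should be slightly careful that the relevant conjugation is by $G(\A)$ (not $G(\A)^1$) so that the archimedean component is genuinely unconstrained before we impose the support condition — but this is exactly the form recalled from \cite{Arthur5}, Theorem~$8.1$. A secondary point is bookkeeping with $A_G$: we work on $G(\A)^1$, so "central semisimple part" should be read as central in $G(\R)^1$, but for $\GL(n)$ this is harmless since a rational matrix all of whose eigenvalues are $1$ is unipotent regardless. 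The dimension constant $c$ (hence $C_n$) comes only from bounding the coefficients of a degree-$n$ characteristic polynomial by a power of the Frobenius norm, which is elementary.
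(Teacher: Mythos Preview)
Your argument is correct and is essentially the paper's own proof. Both reduce via (\ref{coarsegeom}) and the support criterion to showing that a non-unipotent $\gamma\in G(\Q)$ cannot have a $G(\A)$-conjugate in $B_R\times K(N)$, and both do this via conjugation-invariance of the characteristic polynomial together with Lemma~\ref{distancebound}. The only cosmetic difference is that the paper works with the characteristic polynomial of $\gamma-I$ (equivalently, your ``deviation of $\chi_\gamma$ from $(T-1)^n$'' after the shift $T\mapsto T+1$) and records the sharper valuation $\nu_p(a_k)\ge (n-k)\nu_p(N)$, hence $|a_k|\ge N^{n-k}$, before pushing this to a lower bound on $\|g_\infty\|$; you instead bound $\|g_\infty\|$ first and use only $N\mid a_k$, which is weaker but already enough for the conclusion.
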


\begin{proof}
\noindent  In the coarse geometric expansion of the Arthur trace formula (\ref{coarsegeom}), we are summing distributions indexed over equivalence classes in $G(\Q)$. For our test function $f$, we wish to pick $R$ such that $J_{\mf{o}}(f)=0$ for any $\mf{o}$ not the unipotent class. As explained at the end of Section \ref{tracereview}, for this it is sufficient to show that the $G(\A)$-conjugacy classes of $\mf{o}$ do not intersect the support of the test function.

Our test function $f=h^{\tau,d}_{t,R}\otimes \chi_{K(N)}$ has its support inside $B_R K(N)\subset G(\A)$. We will pick $R$ such that for any $\gamma$ with semisimiple part $\gamma_{ss}\neq I$ in $G(\Q)$, every conjugate lies outside the support. Take such a $\gamma$, and let $g = x^{-1}\gamma x$ for some $x\in G(\A)$. Write $g=g_\infty\prod_pg_p$ and assume $\prod_pg_p\in K(N)$. We will then pick $R$ such that $g_\infty\notin B_R$.

Let $q(x)\in\Q[x]$ be the characteristic polynomial of $\gamma-I$. Note that by conjugation invariance, this is also the characteristic polynomial of $g_\nu-I$ for all places $\nu$. As $\gamma$ is not unipotent by assumption, this polynomial has a non-leading, non-zero coefficient, say for the degree $k$ term, $0\leq k\leq n-1$. Recall that this polynomial is $q(x)=\det(xI-(g_\nu-I))$. Then for every prime $p$ this coefficient, call it $a_k$, satisfies
\begin{align*}
    \nu_p(a_k) \geq (n-k)\cdot\nu_p(N),
\end{align*}
since it is a sum of products of $n-k$ elements of $p^{\nu_p(N)}\Z_p$, as $g_p$ lies in $K_p(p^{\nu_p(N)})$. In particular, it is integral, and as it is non-zero we get that $|a_k|\geq N^{n-k}$. This implies that at least one of the entries of $g_\infty-I$ has norm greater than $c_n N$, for some constant $c_n>0$ only depending on $n$, and this in turn implies the same lower bound on the Frobenius norm of $g_\infty-I$. Applying Lemma \ref{distancebound}, we get that there exists some $C_n>0$ such that
$$r(g_\infty)=d(I,g_\infty)\geq \log||g_\infty||\geq C_n\log N$$
for $N$ large enough, only depending on $n$. The last inequality is just using the reverse triangle inequality on our lower bound on $||g_\infty-I||$. Thus, it is clear that picking $R\leq C_n\log N$, we have that $g_\infty\notin B_R$ as desired.
\end{proof}

\noindent We must ensure that in replacing $h_t^{\tau,p}$ with its compactification, we can control the change in the trace formula. This was shown in \cite{MzM2}:

\begin{prop}[\cite{MzM2}, Proposition $7.2$]\label{compactdiff}
    There exists constants $C_1,C_2,C_3>0$ such that
    \begin{align*}
        \big\vert J_{\spec}(h_t^{\tau,p}\otimes\chi_{{X_N}})-J_{\spec}(h_{t,R}^{\tau,p}\otimes\chi_{{X_N}}))\big\vert\leq C_3 e^{-C_1 R^2/t+C_2t}\vol(Y(N))
    \end{align*}
    for all $t>0$, $R\geq 1$ and $N\in\N$.
\end{prop}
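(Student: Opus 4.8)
The plan is to reduce the statement to an estimate on the difference $h_t^{\tau,p} - h_{t,R}^{\tau,p} = (1-\phi_R)h_t^{\tau,p}$, which is supported outside the ball $B(R)$. Since $J_{\spec}$ agrees with $J_{\geo}$ on $\mathcal{C}(G(\A)^1,K_f)$ by the coarse geometric/spectral expansion (valid on the Schwartz space by \cite{FL}), and since $J_{\geo}$ is a continuous distribution on that space, the left-hand side is controlled by an appropriate Schwartz seminorm of $(1-\phi_R)h_t^{\tau,p}\otimes\chi_{X(N)}$. The factorization $f\otimes\chi_{K(N)}$ means the $K(N)$-factor only contributes the constant $\vol(K(N))^{-1}$, which is absorbed into the volume term $\vol(Y(N))$; so everything reduces to bounding the archimedean factor $(1-\phi_R)h_t^{\tau,p}$ in the relevant seminorm.

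First I would recall the Gaussian off-diagonal bound for the heat kernel on the symmetric space: there are constants such that $|h_t^{\tau,p}(g)| \leq C' t^{-N_0} e^{C_2 t} e^{-r(g)^2/(C_1' t)}$ (and similarly for a fixed finite set of derivatives, with possibly larger constants), which follows from standard heat-kernel estimates on $\tilde X$ together with the fact that $H_t^{\tau,p}\in\mathcal{C}^q(G(\R))$ as recorded in (\ref{heatkernelspace}). Restricting to $r(g)\geq R$ and using $r(g)^2/t \geq R^2/(2t) + r(g)^2/(2t)$, the factor $e^{-r(g)^2/(2C_1't)}$ is integrable against the polynomial volume growth of $\tilde X$ and against the Schwartz weights, while the factor $e^{-R^2/(2C_1't)}$ pulls out. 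Combined with the $e^{C_2 t}$ from the heat bound and the $t^{-N_0}$ (which for, say, $t\leq 1$ can be absorbed after adjusting $C_1,C_2$, and for $t\geq 1$ is harmless), this yields a bound of the shape $C_3 e^{-C_1 R^2/t + C_2 t}$ times the seminorm contributions, which are uniform in $N$ apart from the volume factor.

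The remaining point is that the implicit distributional bound $|J_{\geo}(\varphi\otimes\chi_{K(N)})| \leq (\text{seminorm of }\varphi)\cdot(\text{something})\cdot\vol(Y(N))$ has the volume dependence isolated correctly and uniformly in $N$. Here one uses that $J_{\geo}$ restricted to functions supported near the origin (or more precisely, the standard estimates for the coarse geometric expansion, cf.\ \cite{FL}, \cite{MzM1}) produces a factor proportional to $\vol(\Gamma(N)\backslash G(\R)^1)$, hence to $\vol(X(N))$, and then (\ref{volglsl}) converts this to $\vol(Y(N))$ up to the $\phi(N)$ bookkeeping already built into the normalization $\chi_{K(N)} = 1_{K(N)}/\vol(K(N))$.

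The main obstacle I expect is making the dependence on $N$ genuinely uniform: one must check that the constants $C_1,C_2,C_3$ coming from the trace-formula estimates do not secretly depend on the level $N$, only on $G$, $\tau$, $p$, and the fixed geometric data. This is precisely why one invokes the extension of the coarse geometric expansion to the adelic Schwartz space in \cite{FL}, where the continuity estimates are formulated uniformly over open compact subgroups $K_f$ once one divides by $\vol(K_f)$; granting that input, the argument is the Gaussian heat-kernel estimate plus bookkeeping. Since this is Proposition $7.2$ of \cite{MzM2}, I would in fact simply cite it, and the sketch above records how its proof goes for the reader's convenience.
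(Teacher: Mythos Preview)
The paper does not prove this proposition at all; it simply quotes it as \cite{MzM2}, Proposition~7.2, and you correctly recognize this at the end of your proposal. Your sketch of how that result is proved---Gaussian off-diagonal bounds on $h_t^{\tau,p}$ restricted to $r(g)\geq R$, combined with continuity of the coarse expansion on the adelic Schwartz space from \cite{FL} to extract the $\vol(Y(N))$ factor uniformly in $N$---is indeed the shape of the argument in \cite{MzM2}, so your proposal is appropriate here.
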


\noindent It is in controlling this error term that we need to vary $R$ with $N$. The specifics will be discussed in Section \ref{dance}. Importantly, we may pick $C_1,C_2$ independent of the representation $\tau$.

\subsection*{The fine geometric expansion}\label{finegeomexppaper}

By the coarse geometric expansion (\ref{coarsegeom}), Proposition \ref{compactunip} and Proposition \ref{compactdiff}, when analyzing $\Trreg(e^{-t\Delta_p(\tau)})$ we may restrict our attention to $J_{\unip}(h^{\tau,p}_{t,R}\otimes\chi_{K(N)})$. This distribution can be expressed as a finite sum of weighted orbital integrals with certain coefficients, a result known as the fine geometric expansion (\cite{Arthur4}, Corollary $8.3$). First we need a bit of notation.

Let $S$ be a finite set of primes containing $\infty$, and set $\Q_S=\prod_{\nu\in S}\Q_\nu$ and $\Q^S=\prod_{\nu\notin S}'\Q_\nu$, with $\prod'$ the usual restricted product. We define $G(\Q_S)^1 \coloneqq G(\Q_S)\cap G(\A)^1$, and write $C_c^\infty(G(\Q_S)^1)$ for the space of functions $C_c^\infty(G(\Q_S))$ restricted to $G(\Q_S)^1$.

Given $M\in \mathcal{L}$, denote by $\mathcal{U}_M(\Q)$ the finite set of conjugacy classes of unipotent elements of $M(\Q)$. Assume $f=f_S\otimes 1_{K^S}$, with $f_S\in C_c^\infty(G(\Q_S)^1)$ and $1_{K^S}$ the characteristic function of the standard compact subgroup in $G(\Q^S)$.
In the case of $\GL(n)$, as all orbits are stable, the fine geometric expansion can then be expressed as 
\begin{align}\label{finegeom}
    J_{\text{unip}}(f) = \sum_{M\in\mathcal{L}}\:\sum_{\mf{u}\in \mathcal{U}_M(\Q)}a^M(S,\mf{u})J_M(f_S,\mf{u}).
\end{align}
Here $J_M(f,\mf{u})$ is the weighted orbital integral associated to $(M,\mf{u})$ of $f$, and $a^M(S,\mf{u})$ are the \emph{global coefficients}. One finds the general definition of weighted orbital integrals in \cite{Arthur1} - we will express them in our specific situation in a moment. We will apply this to our test function $h^{\tau,p}_{t,R}\otimes \chi_{K(N)}$. We will by abuse of notation write $\chi_{K(N)}$ both for the normalised characteristic function of $K(N)$ in $G(\A_f)$ and in $G(\Q_{S(N)})$, where $S(N)=\lbrace p\text{ prime} : p\mid N\rbrace$. In this case, $a^M(S(N),\mf{u})$ depend on $N$ only by its prime divisors, and does not grow too quickly, as seen from the following lemma.

\begin{lem}[\cite{Matz}]\label{coeffbound}
    There exists $b,c>0$ such that for all $N$, $M$ and $\mf{u}$ we have
    \begin{align*}
        |a^M(S(N),\mf{u})|\leq c(1+\log N)^b.
    \end{align*}
\end{lem}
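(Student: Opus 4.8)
The plan is to reduce the bound to known estimates on the global coefficients $a^M(S,\mf{u})$ in Arthur's fine geometric expansion, as established (in the form we need) by Matz. First I would recall that by the structure theory of these coefficients (see \cite{Arthur4}, and the explicit analysis for $\GL(n)$ in \cite{Matz}, \cite{MatzTemplier}), for fixed $M$ and a fixed unipotent class $\mf{u}\in\mathcal{U}_M(\Q)$ the coefficient $a^M(S,\mf{u})$ depends on the finite set of places $S$ only through the finite part, and is built out of volumes of certain arithmetic quotients together with values (and residues) of Dedekind-type zeta functions and $L$-functions whose conductors are supported on $S\setminus\{\infty\}$. Concretely, for $S=S(N)$ the coefficient factors as a product over $p\mid N$ of local densities, each of which is a polynomial in $\log p$ of bounded degree (the degree bounded in terms of $n$ only, since $M\subseteq\GL(n)$ and there are finitely many unipotent classes), with coefficients that are $O(1)$.

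The key steps, in order, are: (1) invoke the finiteness of $\mathcal{L}$ and of $\bigcup_M\mathcal{U}_M(\Q)$, so that it suffices to prove the bound for each fixed pair $(M,\mf{u})$ with constants then taken to be the maximum over this finite list; (2) cite the explicit shape of $a^M(S(N),\mf{u})$ from \cite{Matz}, exhibiting it as a finite sum of products $\prod_{p\mid N} Q_{M,\mf{u},p}(\log p)$ where each $Q_{M,\mf{u},p}$ is a polynomial of degree at most some $b_0=b_0(n)$ with coefficients bounded by an absolute constant $c_0$; (3) bound this product: since $\sum_{p\mid N}\log p = \log\!\big(\prod_{p\mid N}p\big)\leq \log N$, and a product of at most $\omega(N)$ terms each of the form $c_0(1+\log p)^{b_0}$ is, by the elementary inequality $\prod_i (1+x_i)\leq \big(1+\sum_i x_i\big)$ applied suitably to the quantities $\log p$ (together with $c_0^{\omega(N)}\leq c_0^{O(\log N/\log\log N)}=(1+\log N)^{O(1)}$), bounded by $c(1+\log N)^b$ for appropriate $b,c>0$ depending only on $n$; (4) take the maximum of the resulting constants over the finite index set from step (1).

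The main obstacle I expect is step (2): making precise the claim that the coefficients $a^M(S(N),\mf{u})$ grow at most polynomially in $\log N$ is genuinely the content of the cited result of Matz, and reproving it from scratch would require unwinding Arthur's inductive definition of $a^M(S,\mf{u})$ in terms of the $(M,S)$-family of measures and the weighted orbital integrals of the basic functions $1_{K_p}$, controlling in particular the $p$-adic volume growth of unipotent orbits and the relevant partial zeta values. Since this is exactly what \cite{Matz} supplies, in the paper I would simply quote it; in a self-contained write-up the bulk of the work would be there. The remaining arithmetic (steps 1, 3, 4) is elementary bookkeeping about $\omega(N)$ and $\sum_{p\mid N}\log p$, and poses no real difficulty.
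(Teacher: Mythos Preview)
The paper does not prove this lemma; it is stated with attribution \cite{Matz} and used as a black box. Your plan correctly identifies that the substantive content lies in Matz's analysis of the coefficients $a^M(S,\mf{u})$ for $\GL(n)$, and that is all the paper invokes. In that sense, ``quote \cite{Matz}'' is already the paper's proof.

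That said, the extra arithmetic you propose in step~(3) is not sound as written. First, the inequality $\prod_i(1+x_i)\leq 1+\sum_i x_i$ goes the wrong way for nonnegative $x_i$; the correct inequality is $\prod_i(1+x_i)\geq 1+\sum_i x_i$, so it cannot be used to bound the product from above. Second, the estimate $c_0^{\omega(N)}=(1+\log N)^{O(1)}$ is false for any fixed $c_0>1$: since $\omega(N)$ can be as large as $(1+o(1))\log N/\log\log N$, one only gets $c_0^{\omega(N)}=\exp\bigl(O(\log N/\log\log N)\bigr)$, which grows faster than any fixed power of $\log N$. So if the coefficients truly factored as an unrestricted product over $p\mid N$ of terms bounded below away from $1$, you would not get a polylogarithmic bound this way.

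More to the point, the structural claim in step~(2) that $a^M(S(N),\mf{u})$ factors as $\prod_{p\mid N}Q_{M,\mf{u},p}(\log p)$ is an oversimplification of what Matz actually shows. The coefficients are not in general Euler products over $S$; they are polynomials (of degree bounded in terms of $n$) in the quantities $\log p$ for $p\in S$, with bounded coefficients. From that description the bound $c(1+\log N)^b$ follows immediately via $\sum_{p\mid N}\log p\leq \log N$, with no need for the problematic product estimates. So the right fix is simply to sharpen step~(2) to the correct structural statement and drop step~(3) entirely.
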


\noindent To describe the weighted orbital integrals, we split them into archimedean and non-archimedean parts (see \cite{Arthur3}). Assume that $f=f_\infty\otimes f_{\text{fin}}$. For $L\in\mathcal{L}(M)$ and $Q=LV\in\mathcal{P}(L)$, define
\begin{align}\label{subq}
    f_{\infty,Q}(m) \coloneqq \delta_Q(m)^{\frac12}\int_{K_\infty}\int_{V(\R)}f_\infty(k^{-1}mvk)dv dk, \quad m\in M(\R).
\end{align}
Define $f_{\text{fin},Q}$ analogously. Then we have the expression
\begin{align}\label{localparts}
    J_M(f,\mf{u}) = \sum_{L_1,L_2\in\mathcal{L}(M)} d_M^G(L_1,L_2)J^{L_1}_M(f_{\infty,Q_1},\mf{u}_\infty)J_M^{L_2}(f_{\text{fin},Q_2},\mf{u}_{\text{fin}}).
\end{align}
Here $Q_i$ is some parabolic in $\mathcal{P}(L_i)$, and $\mf{u}_{\text{fin}}=(\mf{u}_p)_p$, where $\mf{u}_p$ the $M(\Q_p)$-conjugacy class of $\mf{u}$. Denote $\mf{u}_\infty$ analogously. The archimedean part is an integral of the form (see \cite{Arthur1})
\begin{align}\label{archilocalorbint}
    J^{L}_M(f_{\infty,Q},\mf{u}_\infty) = \int_{U(\R)}f_{\infty,Q}(u)\omega(u) du
\end{align}
for a certain weight function $\omega$ depending on the class $\mf{u}$, as well as on $M$ and $L$. Here, $U=U_{Q_1}$ is the unipotent radical of a semistandard parabolic subgroup $S=M_{Q_1}U_{Q_1}$ in $M$ such that ${Q_1}$ is a Richardson parabolic for $\mf{u}$ in $M$. By splitting up the finite part further into local parts and computing explicitly, Matz and Müller showed the following lemma.
\begin{lem}[\cite{MzM2},$(9.4)$]\label{localbound}
There exist $c,d>0$ only depending on the group $G$ such that
    \begin{align*}
   \left\vert J_M^L(\chi_{{K(N)},Q},\mf{u}_{\text{fin}})\right\vert \leq c N^{-\dim\Ind^G_M \mf{u}/2}(\log N)^d\vol(K(N))^{-1}.
\end{align*}
\end{lem}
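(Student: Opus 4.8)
The plan is to reproduce the argument Matz--Müller use to prove $(9.4)$ of \cite{MzM2}: reduce the global non-archimedean weighted orbital integral to a product of local integrals over the primes dividing $N$, and estimate each local factor with constants independent of the prime.

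First I would observe that $S(N)=\{p\mid N\}$ and that on $G(\Q_{S(N)})$ the function $\chi_{K(N)}$ is the tensor product $\bigotimes_{p\mid N}\chi_{K_p(p^{\nu_p(N)})}$. The splitting formula for $p$-adic weighted orbital integrals (\cite{Arthur3}) then expresses $J_M^L(\chi_{K(N),Q},\mf{u}_{\text{fin}})$ as a finite sum, indexed by tuples $(L_p)_{p\mid N}$ of Levi subgroups in $\mathcal{L}(M)$, of products $\prod_{p\mid N}J_M^{L_p}(\chi_{K_p(p^{e_p}),Q_p},\mf{u}_p)$ with $e_p=\nu_p(N)$ and $Q_p\in\mathcal{P}(L_p)$; the number of tuples that occur is bounded solely in terms of $G$. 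Hence it suffices to prove, uniformly in $p$ and for every $e\geq 1$, a local bound of the form
\begin{align*}
    \bigl|J_M^{L'}(\chi_{K_p(p^{e}),Q'},\mf{u}_p)\bigr|\leq c_0\,(1+e)^{d_0}\,p^{-e\dim^G_M\mf{u}/2}\,\vol(K_p(p^{e}))^{-1},
\end{align*}
and then multiply over $p\mid N$: the powers of $p$ telescope to $N^{-\dim^G_M\mf{u}/2}$, the normalising volumes to $\vol(K(N))^{-1}$, and the weight contributions $\prod_{p\mid N}(1+e_p)^{d_0}$ combine --- using that all but a bounded number of primes contribute a factor very close to $1$ --- to a factor $O((\log N)^{d})$.

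For the local estimate I would write, as in the archimedean case, $J_M^{L'}(\chi_{K_p(p^{e}),Q'},\mf{u}_p)=\vol(K_p(p^{e}))^{-1}\int_{U_S(\Q_p)}(1_{K_p(p^{e})})_{Q'}(u)\,\omega(u)\,du$, where $U_S$ is the unipotent radical of a Richardson parabolic of $\mf{u}_p$ in $M$ and $\omega$ is Arthur's weight. Since the modulus $\delta_{Q'}$ is trivial on unipotent elements, $(1_{K_p(p^{e})})_{Q'}(u)=\int_{K_p}\int_{V'(\Q_p)}1_{K_p(p^{e})}(k^{-1}uvk)\,dv\,dk$, which vanishes unless $u$ lies in a bounded, $p$-independent subset of $U_S(\Q_p)$ meeting $K_pV'(\Q_p)$, and on that set the inner $v$-integral is at most the measure of $V'(\Q_p)\cap kK_p(p^{e})k^{-1}$. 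The measure bookkeeping --- combining the measure of the relevant region inside $U_S(\Q_p)$ with the $V'$-measure, and dividing by $\vol(K_p(p^{e}))$ --- is what produces the power $p^{-e\dim^G_M\mf{u}/2}$; and since $\omega(u)$ is a polynomial of degree at most $\dim(\mf{a}_M/\mf{a}_G)$ in the logarithmic heights of $u$, it is $O((1+e)^{d_0})$ on the support. This yields the local bound with $c_0,d_0$ depending only on $G$.

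The main obstacle is carrying out this measure computation uniformly in $p$: one needs enough control of the $\Q_p$-structure of the unipotent class $\mf{u}_p$ and its Richardson parabolic, and of the intersections $U_S(\Q_p)\cap K_p(p^{e})$ and $V'(\Q_p)\cap K_p(p^{e})$, to extract precisely the exponent $\dim^G_M\mf{u}/2$ with a leading constant that does not deteriorate as $p\to\infty$, while absorbing the finitely many primes of bad reduction. By comparison, checking that only boundedly many Levi tuples arise in the splitting formula, and bounding $\omega$ on the (bounded) support, are routine once the structural input is in place.
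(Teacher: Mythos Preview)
The paper does not give its own proof of this lemma: it is quoted verbatim from \cite{MzM2}, (9.4), with the one-line indication that Matz--Müller obtain it ``by splitting up the finite part further into local parts and computing explicitly.'' Your proposal follows exactly that route---Arthur's splitting formula to reduce to primes $p\mid N$, a local bound at each $p$ with the correct power $p^{-e\dim^G_M\mf{u}/2}$ and a polylogarithmic weight contribution, then multiplying over $p$---so the approach matches what the paper points to.
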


\begin{rmk}
    Unless $(M,\mf{u})=(G,\lbrace 1\rbrace)$, we have that $\frac{\dim\Ind^G_M \mf{u}}{2}\geq n-1$, and this is where the power saving in our main result comes from.
\end{rmk}
\noindent As the non-archimedean part does not see the variable $t$, nor does it see the representation $\tau$ or the radius $R$, we may focus on the integrals
\begin{align}\label{inforbitcrude}
    J^{L}_M(((h^{\tau,p}_{t,R})_Q,\mf{u}_\infty) = \int_{U(\R)}(h^{\tau,p}_{t,R})_Q(u)\omega(u) du.
\end{align}
Recalling the definition of $f_Q$, we may use that $h^{\tau,p}_{t,R}$ is bi-$K_\infty$-invariant, and letting $MV'\coloneqq M_{Q_1}U_{Q_1}V$ such that $V'$ is the unipotent radical of a Richardson parabolic for $\Ind_M^G(\mf{u})$ in $G$, we may rewrite this as
\begin{align}\label{inforbit}
    J^{L}_M(((h^{\tau,p}_{t,R})_Q,\mf{u}_\infty) = \int_{V'(\R)}h^{\tau,p}_{t,R}(v)\omega(v) dv.
\end{align}

\noindent This description will be used in the following section.

\bigskip

\section{Asymptotic time behaviour}\label{asymptotics}
\noindent We continue in the setting of Section \ref{trace formula} with $G=\GL(n)$ and $K_f=K(N)$. It is clear from the definition that to understand analytic torsion, we must analyze the terms
\begin{align}\label{mellin}
    \FP_{s=0}\left(\frac{1}{s\Gamma(s)}\int_0^\infty \Trreg(e^{-t\Delta_p(\tau)})t^{s-1}dt\right),
\end{align}
where we by the integral in fact means its meromorphic continuation to all of $\C$ as a function of $s$. As $\Gamma(s)$ has a simple pole at $s=0$ with residue $1$, we see that $\frac{1}{s\Gamma(s)}$ is holomorphic at $s=0$ with value $1$. Thus, we are reduced to examining the meromorphic continuation of the Mellin transform of $\Trreg(e^{-t\Delta_p(\tau)})$.

From the standard theory of Mellin transforms (see e.g. \cite{Zagier}), we can understand the meromorphic continuation of the Mellin transform of a function $f$ by giving asymptotics of $f(t)$ for $t\to 0$ and $t\to\infty$. More precisely, it is sufficient to establish the following:
\begin{align}\label{largef}
    f(t) &= O(e^{-ct}) \qquad\qquad\qquad\qquad\qquad\qquad \text{as }t\to\infty \\\label{smalltf}
    f(t) &= \sum_{i=0}^B\sum_{j=0}^{r_i}c_{ij}t^{\alpha_i}(\log t)^j +O(t^{\alpha_{B+1}}) \qquad \text{as }t\to 0
\end{align}
for $(\alpha_i)_{i\in\N_0}$ a sequence of real, possibly negative numbers with $\alpha_i<\alpha_{i+1}$ and tending to $+\infty$, and $c>0$. Then we know that the Mellin transform of $f$ converges in some half plane and has a meromorphic continuation to all of $\C$. Furthermore, its residues at its poles can be described in terms of the coefficients $c_{i,j}$. In the following, we present such asymptotics in the different settings we will need for our proof.

\subsection*{Large $t$ asymptotic of the spectral side}

During the proof of the main theorem, we will need to control large $t$ behaviour of the entire trace formula at once to control the error term incurred from truncating the the Mellin transform. This was done in (\cite{MzM2}, Corollary $6.7$), using the fine spectral expansion of the spectral side of the Arthur trace formula and results on logarithmic derivatives of intertwining operators. The result is
\begin{align}\label{larget}
    \left\vert J_{\text{spec}}(h_t^{\tau,p}\otimes \chi_{K(N)})\right\vert \leq Ce^{-c t}\vol(Y(N))
\end{align}
for some $C,c>0$, for all $t\geq 1$, $p=0,\dots,n$ and $N\in\N$.
\begin{rmk}\label{rmkonconstant}
    Going carefully through the proof of the above result in \cite{MzM2}, one sees that we may pick $c=\lambda(1-\epsilon)$ for any $0<\epsilon<1$, where $\lambda$ is the spectral gap guaranteed by assuming $\tau$ is $\lambda$-strongly acyclic. Indeed, in  (\cite{MzM2}, $(6.20)$) they pick $c=\frac{\lambda}{2}$, but the proof works for any multiple of $\lambda$ with a factor less than $1$.  
\end{rmk} 

\subsection*{Small $t$ asymptotics of orbital integrals}

By the definition of the regularized trace of the heat operator, along with Proposition \ref{compactunip} and the fine geometric expansion (\ref{finegeom}), once we have switched to a compactified test function it is sufficient to establish asymptotics for the weighted orbital integrals. By the decomposition into archimedean and non-archimedean part (\ref{localparts}), as the non-archimedean part does not depend on the variable $t$ in our case, we are reduced to analyzing the archimedean parts.

The desired small-$t$ asymptotics (\ref{smalltf}) for our archimedean weighted orbital integrals given in (\ref{inforbit}) were shown in \cite{MzM1}. Combining (\cite{MzM1}, Proposition $12.3$) and (\cite{MzM1}, $(13.14)$) we get
\begin{prop}\label{orbitsmallt}
Let $M\in \mathcal{L}$ and $\mf{u}\in \mathcal{U}_M(\Q)$ with $(M,\mf{u})\neq (G,\lbrace 1\rbrace)$. For every $N\geq 3$, there is an expansion
    \begin{align*}
        J^G_M((h_{t,R}^{\tau,p})_Q,\mf{u}) = t^{-(d-k)/2}\sum_{j=0}^N\sum^{r_j}_{i=0}c_{ij}(\tau,p)t^{j/2}(\log t)^i+O(t^{(N-d+k+1)/2})
    \end{align*}
    as $t\to 0^+$.
\end{prop}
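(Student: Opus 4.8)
By (\ref{inforbit}) the object to analyse is $\int_{V'(\R)}h^{\tau,p}_{t,R}(v)\,\omega(v)\,dv$, with $h^{\tau,p}_{t,R}=\phi_R\,h^{\tau,p}_t$ and $d=\dim\Tilde X$; the first step is to discard the cut-off $\phi_R$. The heat kernel obeys a global Gaussian bound $|h^{\tau,p}_t(g)|\le C\,t^{-d/2}e^{-r(g)^2/(5t)}$ for $0<t\le1$, while $\omega$ restricted to $V'(\R)$ is of at most polynomial growth in $r(v)$; since $1-\phi_R$ is supported where $r(v)\ge R$, the part of the integral it removes is $O\!\big(e^{-R^2/(20t)}\big)=O(t^A)$ for every $A>0$. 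Hence it suffices to establish the expansion for the uncut integral $\int_{V'(\R)}h^{\tau,p}_t(v)\,\omega(v)\,dv$, and the coefficients it produces are automatically independent of $R$.

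For the uncut integral I would follow the argument of (\cite{MzM1}). By (\ref{casimirglobal}) and Schur's lemma $\tau(\Omega)$ acts as a scalar, so $H^{\tau,p}_t=e^{-t\tau(\Omega)}K_t$, where $K_t$ is the heat kernel of a Laplace-type operator on $\Tilde E_{\nu_{\tau,p}}$; near the identity it has a Minakshisundaram--Pleijel parametrix, so that
\begin{align*}
 h^{\tau,p}_t(\exp Y)=(4\pi t)^{-d/2}e^{-r(\exp Y)^2/4t}\Big(\sum_{j=0}^{J}a_j(Y;\tau,p)\,t^{j}+O\!\big(t^{J+1}(1+\|Y\|)^{M_J}\big)\Big)
\end{align*}
uniformly for $Y$ near $0$ in $\mf{g}$, with $a_j$ smooth and $a_0(0;\tau,p)=\dim(\Lambda^p\mf{p}^*\otimes V_\tau)$. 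Parametrising $V'(\R)$ by its Lie algebra, of dimension $k=\dim V'(\R)$, via $\exp$ (which carries Haar measure to Lebesgue measure $dX$, up to a fixed constant), substituting $Y=\sqrt{t}\,X$ so that $dv=t^{k/2}\,dX$, and writing $r(\exp(\sqrt{t}\,X))^2/(4t)=\tfrac{1}{4}Q(X)+\sqrt{t}\,\rho_t(X)$ for a positive definite quadratic form $Q$, the integrand becomes $(4\pi t)^{-(d-k)/2}e^{-Q(X)/4}$ times a series in $t^{1/2}$ with coefficients polynomial in $X$. The logarithmic terms enter here: $\omega|_{V'}$ is a finite sum of products of logarithms of polynomial functions on $V'$, and under $v=\exp(\sqrt{t}\,X)$ each such logarithm splits as a term proportional to $\log t$ plus a function of $X$, which is the source of the powers $(\log t)^i$. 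Integrating the resulting Gaussian-type integrals over $X\in\R^{k}$ term by term --- the region $\|X\|\gtrsim t^{-1/4}$ contributing only $O(t^{\infty})$, by the Gaussian bound above together with the Schwartz decay (\ref{heatkernelspace}) of $h^{\tau,p}_t$ --- yields $t^{-(d-k)/2}\sum_{i,j}c_{ij}(\tau,p)\,t^{j/2}(\log t)^i+O(t^{(N-d+k+1)/2})$, the $(\tau,p)$-dependence being inherited from the parametrix coefficients $a_j(\,\cdot\,;\tau,p)$.

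The main obstacle is not the overall shape of the expansion but the two bookkeeping-heavy ingredients: (i) a parametrix remainder estimate uniform enough in the rescaled variable $X$ to license term-by-term integration, and (ii) tracking precisely how $\omega$ and the Taylor expansion of $r(\cdot)$ along $V'$ combine under $Y=\sqrt{t}\,X$, so as to pin down the leading exponent $-(d-k)/2$ exactly and to identify the exact collection of pairs $(\alpha_i,r_i)$ of exponents and log-orders. Both of these are exactly what is carried out in (\cite{MzM1}): Proposition $12.3$ there gives the expansion of $\int_{V'(\R)}h^{\tau,p}_t(v)\,\omega(v)\,dv$, and its equation $(13.14)$ identifies the leading power in terms of $\dim^G_M\mf{u}$ (equivalently $k=\dim V'(\R)$). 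Combined with the cut-off removal of the first step, this gives Proposition \ref{orbitsmallt}.
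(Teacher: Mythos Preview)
Your proposal is correct and aligns with the paper's treatment: both reduce the statement to \cite{MzM1}, Proposition~$12.3$ together with $(13.14)$. The only difference is cosmetic: the paper simply observes (Remark~\ref{independentR}) that the argument in \cite{MzM1} never uses the compact support of $h_{t,R}^{\tau,p}$, so the result and its coefficients are automatically $R$-independent, whereas you first strip off the cut-off $\phi_R$ via the Gaussian bound and then invoke \cite{MzM1} for the uncut integral. Your sketch of the parametrix-and-rescaling mechanism behind \cite{MzM1}, Proposition~$12.3$ is accurate and helpful, but not something the paper reproduces.
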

\noindent Here $k$ is the dimension of the Lie algebra of $V(\R)$, and $c_{ij}(\tau,p)$ are certain coefficients depending only on $i,j,\tau,p$. Note that the results (\cite{MzM1}, Proposition $12.3$) is stated for $M\neq G$, but their proof holds in the more general case above without modification. Furthermore, it is sufficient for our purposes to state the result for $L=G$ as above, since every Levi subgroup $L$ of $\GL(n)$ is canonically isomorphic to a finite direct product of $\GL(m)$'s, $m\leq n$, and the orbital integral splits accordingly.

\begin{rmk}\label{independentR}
    The proof of (\cite{MzM1}, Proposition $12.3$) does not utilize the fact that the support of the test function is compactified, i.e. their proof holds when replacing $h_{t,R}^{\tau,p}$ by $h_t^{\tau,p}$ - this can be seen in the first inequality of Section $12.2$. In particular, neither the coefficients nor the implied constants in the error term depend on the radius of compactification $R$. 
\end{rmk}

\subsection*{Large $t$ asymptotics of orbital integrals}

We now show exponential decay as $t$ goes to infinity of the orbital integrals appearing in (\ref{inforbit}). Throughout, we will assume $t > 1$. Let $A,c,C>0$ be given as in (\ref{vanishin|g|}). Recall that $B(k)$ are the elements $g$ of $G(\R)^1$ with $r(g)< k$. We now decompose $V(\R)$ into disjoint subsets of growing radius:
\begin{align*}
    V(\R) = (V(\R)\cap B(A)) \cup \bigcup_{k=1}^\infty V(\R) \cap (B(A+k)\setminus B(A+k-1)).
\end{align*}
To simplify notation, for $k\geq 1$ write 
\begin{align*}
    D(k) \coloneqq  V(\R) \cap (B(A+k)\setminus B(A+k-1))
\end{align*}
and set $D(0)\coloneqq (V(\R)\cap B(A))$. The point is that for $v\in D(k)$ we then have the bounds $A+k-1\leq r(v)<A+k$. The orbital integral can be decomposed
\begin{align}\label{D(k)decomp}
    \int_{V(\R)} h^{\tau,p}_{t,R}(v)\omega(v)dv = \int_{D(0)} h^{\tau,p}_{t,R}(v)\omega(v)dv +\sum_{k=1}^\infty \int_{D(k)} h^{\tau,p}_{t,R}(v)\omega(v)dv.
\end{align}
\noindent As a consequence of log-homogeneity (\cite{MzM1}, Proposition $7.1$), the weight function $\omega(v)$ is bounded by a polynomial of uniformly bounded degree in powers of $\log\lVert v\rVert$, and hence by a polynomial in $r(v)$ by Lemma \ref{distancebound}. This will be used in the following.

The integral over $D(0)$ can be handled with the use of Proposition \ref{traceheatdecay}. Indeed, we get the bound
\begin{align*}
    \left\vert \int_{D(0)} h^{\tau,p}_{t,R}(v)\omega(v)dv\right\vert &\leq C e^{-\lambda t}\int_{D(0)} (1+r(v))^kdv \\
    &\leq Ce^{-\lambda t} (1+A)^k \vol(D(0)).
\end{align*}
As $D(0)$ is a compact domain, thus of finite volume, only depending on $A$ and $V$, and we consider only finitely many such groups $V$, we see that the above bound is of the form $C' e^{-\lambda t}$ for some constant $C'$ only depending on $G$. 

For the integral over $D(k)$, $k\geq 1$, we note that (\ref{vanishin|g|}) is applicable. Hence we get
\begin{align}\label{firststeparch}
    \left\vert \int_{D(k)} h^{\tau,p}_{t,R}(v)\omega(v)dv\right\vert &\leq C e^{-\lambda t}\int_{D(k)} e^{-c\frac{r(v)^2}{t}}(1+r(v))^bdv.
\end{align}
Pick any small $\epsilon>0$ and a constant $C''>0$ such that $(1+r(v))^b\leq C''e^{\epsilon r(v)}$. The volume of $D(k)$ is bounded by the volume of $V(\R)\cap B(A+k)$, which is again bounded by the volume of
\begin{align}\label{ballexp}
    \lbrace v\in V(\R) \mid \lVert v\rVert \leq e^{A+k}\rbrace
\end{align}
by Lemma \ref{distancebound}. since the Haar measure on $V(\R)$ is compatible with the Lebesgue measure on the Euclidean space $\R^{\dim V}$, the volume of the ball (\ref{ballexp}) is bounded by polynomial of degree $\dim V$ in the radius. Hence, for some constant $C_A$ depending on $A$, we get
\begin{align*}
    \vol(D(k)) \leq C_A e^{\dim V \cdot\, k}.
\end{align*}
Let $c_1 = \dim V+\epsilon$. Then we can bound the integral on the right hand side of (\ref{firststeparch}) by
\begin{align*}
    \int_{D(k)} e^{-c\frac{r(v)^2}{t}}(1+r(v))^kdv &\leq Ce^{-c\frac{(k+A-1)^2}{t}}e^{\epsilon k}\vol(D(k)) \\
    &\leq C_A' e^{-c\frac{k^2}{t}}e^{c_1 k},
\end{align*}
for some other constant $C'_A$ depending only on $A$. This bound guarantees the absolute convergence of the sum in (\ref{D(k)decomp}), meaning we can bound it by
\begin{align*}
    C_A'\,\sum_{k=1}^\infty e^{-c\frac{k^2}{t}}e^{c_1 k}.
\end{align*}
As the function $e^{-c\frac{x^2}{t}}e^{c_1 x}$ for $x\geq 0$ is increasing for $2c\frac{x}{t}-c_1\le 0$, and decreasing elsewhere, the above sum can be bounded by the corresponding integral up to an absolute constant, and this integral is computable:
\begin{align*}
    \int_0^\infty e^{-c\frac{x^2}{t}}e^{c_1 x} dx = \sqrt{\frac{\pi t}{4c}}e^{c_1^2t/c}\left(1-\text{erf}\left(c_1\sqrt{\frac{t}{c}}\right)\right).
\end{align*}
Here $\text{erf}$ is the \textit{error function} $\text{erf}(z) = \frac{\sqrt{\pi}}{2}\int _0^z e^{-t^2}dt$, satisfying $0< \text{erf}(z) <1$ for all $z>0$. Set $c' = \frac{c_1^2}{c}$. Putting everything together, we have proven the following result.

\begin{prop}\label{orbitasymp}
    Assume $t> 1$. Then there exist constants $C_{\tau,p}>0$ only depending on $\tau$ and $p$ and $c'>0$ only depending on $G$ such that
    \begin{align*}
        \left\vert\int_{V(\R)}h^{\tau,p}_{t,R}(v)\omega(v)dv\right\vert \leq C_{\tau,p} \,e^{-(\lambda-c') t}.
    \end{align*}
\end{prop}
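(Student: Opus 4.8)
The plan is to substitute the heat kernel bound of Proposition \ref{traceheatdecay} directly into the orbital integral \eqref{inforbit}, then control the remaining integral of the weight function over the truncated unipotent radical by volume and polynomial bounds. Concretely, since $\tau$ is $\lambda$-strongly acyclic, Proposition \ref{traceheatdecay} gives a constant $C>0$ depending only on $\tau$ and $p$ such that $|h_t^{\tau,p}(g)|\leq C e^{-\lambda t}$ for all $g\in G(\R)$ whenever $t\geq 1$. The factor $\phi_R$ in $h_{t,R}^{\tau,p}$ is supported in $B(R+\epsilon)$ and bounded by $1$, so the integrand in \eqref{inforbit} is supported on $V'(\R)\cap B(R+\epsilon)$ and bounded there by $C e^{-\lambda t}\,|\omega(v)|$. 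Pulling the exponential out of the integral reduces the estimate to bounding $\int_{V'(\R)\cap B(R+\epsilon)}|\omega(v)|\,dv$, which is independent of $t$ and of $\tau$.

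For that remaining integral, the first step is to bound $|\omega(v)|$ pointwise. By the log-homogeneity of the weight function (\cite{MzM1}, Proposition $7.1$), $\omega(v)$ is a polynomial of uniformly bounded degree in $\log\|v\|$, so there are constants $C',b>0$, depending only on $G$, with $|\omega(v)|\leq C'(1+\log\|v\|)^b$ on the support. Next, for $v\in B(R+\epsilon)$ Lemma \ref{distancebound} gives $\log\|v\|\leq r(v)<R+\epsilon$, so $|\omega(v)|\leq C'(1+R+\epsilon)^b$, a bound uniform over the region of integration. Finally, one needs a bound on $\vol(V'(\R)\cap B(R+\epsilon))$; since $V'(\R)$ is a closed submanifold (a unipotent radical, hence polynomially embedded) and $B(R+\epsilon)$ is a metric ball of radius $O(R)$ in $\Tilde{X}$, its volume is at most polynomial in $R$, say $\leq C'' R^{a}$ for constants $C'',a>0$ depending only on $G$. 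Putting the three pieces together yields
\begin{align*}
    \left\vert\int_{V'(\R)}h^{\tau,p}_{t,R}(v)\omega(v)\,dv\right\vert \leq C e^{-\lambda t}\cdot C'(1+R+\epsilon)^b\cdot C'' R^{a},
\end{align*}
which after absorbing constants and adjusting $a,b$ gives the claimed bound $D_{\tau,p}\,e^{-\lambda t}R^a(\log R)^b$; the stated log-factor form is in fact weaker than the polynomial-in-$R$ bound just obtained, so one may simply record it in that form, noting $D_{\tau,p}$ collects the $\tau,p$-dependence from $C$.

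The main obstacle, such as it is, lies in the two "geometric" inputs for the truncated integral: verifying that the weight function $\omega$ on the relevant unipotent radical $V'(\R)$ really is controlled by a fixed polynomial in $\log\|v\|$ via (\cite{MzM1}, Proposition $7.1$) in the precise normalization used here (where we have rewritten the integral over $U(\R)$ as one over $V'(\R)$ using bi-$K_\infty$-invariance of $h^{\tau,p}_{t,R}$, cf.\ the passage from \eqref{inforbitcrude} to \eqref{inforbit}), and obtaining the polynomial volume bound $\vol(V'(\R)\cap B(R))\leq C R^a$. Both are standard — the volume bound follows because the exponential map realizes $V'(\R)$ as a polynomial image and geodesic balls in $\Tilde{X}=\GL(n,\R)^1/\SO(n)$ have at most exponential volume growth, but $V'(\R)$ being unipotent the induced volume grows only polynomially in $R$ — but they are the only places where genuine geometry, rather than bookkeeping, enters. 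Everything else is a direct substitution of Proposition \ref{traceheatdecay} and Lemma \ref{distancebound}.
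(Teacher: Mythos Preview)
Your proposal is correct and follows essentially the same approach as the paper: apply Proposition \ref{traceheatdecay} to extract the factor $Ce^{-\lambda t}$, invoke log-homogeneity of $\omega$ (\cite{MzM1}, Proposition $7.1$) together with Lemma \ref{distancebound} to bound the weight on $B_R$, and absorb the remaining truncated integral over $V'(\R)\cap B_R$ into a polynomial in $R$. The only cosmetic difference is that the paper records the weight bound as $(\log R)^b$ and the volume as $\vol(B(R))\le R^a$, whereas you obtain a pure power of $R$; as you observe, this is immaterial since the constants $a,b$ in the statement are free.
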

\noindent This gives us the desired large $t$ asymptotics (\ref{largef}) if one assumes a large enough $\lambda$. We will put this to work in the following section.

\bigskip

\section{Proof of the main theorem}\label{conclusion}

\noindent We continue to assume $G=\GL(n)$, $n\geq 3$ and $\tau$ is $\lambda$-strongly acyclic. To prove Theorem \ref{glmytheorem}, we first turn to an analysis of the terms (\ref{mellin}). We truncate the integral in this expression with respect to a parameter $T>0$, leaving a remainder term.
\begin{align}\label{trunc}
    \int_0^\infty \Trreg(e^{-t\Delta_p(\tau)})t^{s-1}dt = &\int_0^T \Trreg(e^{-t\Delta_p(\tau)})t^{s-1}dt \\
    + &\int_T^\infty \Trreg(e^{-t\Delta_p(\tau)})t^{s-1}dt.
\end{align}
We focus first on the second integral on the right hand side, what we call the remainder. It is convergent for all $s$, in particular it is holomorphic at $s=0$. For any meromorphic function $f$ holomorphic at $0$ it holds that $\FP_{s=0}(f(s))=f(0)$, thus by linearity, we need only analyze
\begin{align*}
    E_0(0,T)\coloneqq \int_T^\infty \Trreg(e^{-t\Delta_p(\tau)})t^{-1}dt.
\end{align*}
By applying our large $t$ asymptotic for the spectral side (\ref{larget}) and its following remark, and using the trace formula, i.e. the equality of the geometric side and spectral side, we get the following lemma.

\begin{lem}\label{error0}
There exists a $C>0$ only depending on $G$ such that for any $\epsilon>0$,
    \begin{align*}
    \vert E_0(0,T)\vert\leq Ce^{-\lambda(1-\epsilon)T}\vol(Y(N)).
\end{align*}
\end{lem}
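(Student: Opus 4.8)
The plan is to bound $|E_0(0,T)|$ by passing to the spectral side of the trace formula and invoking the large-$t$ estimate \eqref{larget}. Recall that by definition $\Trreg(e^{-t\Delta_p(\tau)}) = J_{\geo}(h_t^{\tau,p}\otimes\chi_{K(N)})$, and by the equality of the geometric and spectral sides of the Arthur trace formula this equals $J_{\spec}(h_t^{\tau,p}\otimes\chi_{K(N)})$. Hence
\begin{align*}
    |E_0(0,T)| = \left\vert\int_T^\infty J_{\spec}(h_t^{\tau,p}\otimes\chi_{K(N)})\,t^{-1}\,dt\right\vert \leq \int_T^\infty \left\vert J_{\spec}(h_t^{\tau,p}\otimes\chi_{K(N)})\right\vert t^{-1}\,dt.
\end{align*}

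First I would apply \eqref{larget} together with the remark following it, which allows the exponential rate to be taken as $c = \lambda(1-\epsilon)$ for any $0<\epsilon<1$: for $t\geq 1$ we have $|J_{\spec}(h_t^{\tau,p}\otimes\chi_{K(N)})|\leq C e^{-\lambda(1-\epsilon)t}\vol(Y(N))$ for some $C>0$ depending only on $G$. Assuming $T\geq 1$ (which is the regime of interest; for the final theorem $T$ will be taken large), we then estimate
\begin{align*}
    |E_0(0,T)| \leq C\vol(Y(N))\int_T^\infty e^{-\lambda(1-\epsilon)t}\,t^{-1}\,dt \leq C\vol(Y(N))\,T^{-1}\int_T^\infty e^{-\lambda(1-\epsilon)t}\,dt,
\end{align*}
and the last integral equals $\frac{1}{\lambda(1-\epsilon)}e^{-\lambda(1-\epsilon)T}$. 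Absorbing the factor $\frac{1}{\lambda(1-\epsilon)T}\leq \frac{1}{\lambda(1-\epsilon)}$ and the original $C$ into a new constant (still depending only on $G$, since $\lambda$ and $\epsilon$ are fixed once $\epsilon$ is chosen), this yields $|E_0(0,T)|\leq Ce^{-\lambda(1-\epsilon)T}\vol(Y(N))$, as claimed.

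The only subtlety worth flagging is that \eqref{larget} is stated for $t\geq 1$, so strictly speaking one should either restrict to $T\geq 1$ (harmless, as $T\to\infty$ later) or note that on $[T,1]$ the integrand is bounded and contributes a term of the same order; I would simply assume $T\geq 1$. A second minor point is that the convergence of the remainder integral at $s=0$ — needed to justify $\FP_{s=0}(f) = f(0)$ and hence the reduction to $E_0(0,T)$ — is already guaranteed by \eqref{larget}, since exponential decay in $t$ makes $\int_T^\infty J_{\spec}(\cdots)t^{s-1}\,dt$ entire in $s$. There is no real obstacle here; the statement is essentially a direct consequence of the exponential bound on the spectral side combined with the trace formula.
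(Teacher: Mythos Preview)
Your proposal is correct and follows exactly the approach indicated in the paper: pass from the geometric to the spectral side via the trace formula and apply \eqref{larget} with the remark that one may take $c=\lambda(1-\epsilon)$. The paper gives no further details beyond that one-sentence justification, so your write-up is in fact more explicit than the original; the only caveat is the implicit assumption $T\geq 1$, which you already flagged and which is harmless for the application.
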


\medskip

\subsection*{Restricting to unipotent contribution}

\noindent We return to the first integral on the right-hand side of (\ref{trunc}). We want to substitute the test function with its compactified version. By Proposition \ref{compactdiff}, we may write
\begin{align*}
    \int_0^T J_{\geo}(h^{\tau,p}_t\otimes\chi_{K(N)})t^{s-1}dt = \int_0^T J_{\geo}(h^{\tau,p}_{t,R}\otimes\chi_{K(N)})t^{s-1}dt+E_1(s,R,T),
\end{align*}
with the error term $E_1(s,R,T)$ given by an integral convergent for all $s\in\C$ and satisfying
\begin{align}\label{error1}
    |E_1(0,R,T)| &\leq C_3\int_0^Te^{-C_1R^2/t+C_2t}t^{-1}dt\vol(Y(N)) \\
    &\leq C_3e^{-C_4R^2/T+C_2T}\int_0^{T/R^2}e^{-C_4/t}t^{-1}dt\vol(Y(N)).
\end{align}
Again, $C_2,C_4>0$ both only depend on the group $G$. We will return to this estimate in a moment. The reward for compactifying our test function is that by Proposition \ref{compactunip}, only the unipotent part of the coarse geometric expansion (\ref{coarsegeom}) contribute if we keep the radius of the support small enough. Assume $R\leq C_n\log N$. Then
\begin{align*}
    \int_0^T J_{\geo}(h^{\tau,p}_{t,R}\otimes\chi_{K(N)})t^{s-1}dt = \int_0^T J_{\unip}(h^{\tau,p}_{t,R}\otimes\chi_{K(N)})t^{s-1}dt
\end{align*}

\bigskip

\subsection*{Applying the fine geometric expansion}\label{applyingfine}

To deal with the truncated Mellin transform of $J_{\unip}$, we recall the fine geometric expansion (\ref{finegeom}). We isolate the term for $(M,\mf{u})=(G,\lbrace 1\rbrace)$, which is exactly $h^{\tau,p}_{t,R}(1)\vol(Y(N))$, and denote the remaining sum by $J_{\unip-1}$. This allows us to write
\begin{align}\label{unip-1}
    \int_0^T J_{\unip}(h^{\tau,p}_{t,R}\otimes\chi_{K(N)})t^{s-1}dt &= \int_0^T h_{t,R}^{\tau,p}(1)t^{s-1}dt \vol(Y(N)) \\
    &+\int_0^T J_{\unip-1}(h^{\tau,p}_{t,R}\otimes\chi_{K(N)})t^{s-1}dt .
\end{align}
The first integral on the right-hand side is the truncated Mellin transform of $h_{t,R}^{\tau,p}(1)$. As we are evaluating at $1\in G(\R)$, the compactification has no effect, and hence we can ignore $R$. By (\cite{MzM1}, ($5.11$)) we have an asymptotic expansion
\begin{align*}
    h_t^{\tau,p}(1)\sim \sum_{i=0}^\infty a_i t^{-d/2+i}, \quad t\to 0.
\end{align*}
Furthermore, as a special case of Theorem \ref{traceheatdecay} we have for $t\geq 1$,
\begin{align*}
    |h_t^{\tau,p}(1)|\leq C_{\tau,p}e^{-\lambda t}.
\end{align*}
Thus, the integral is convergent for $s$ in some half-plane and has a meromorphic extension to all of $\C$. We may write
\begin{align*}
    \int_0^T h_{t,R}^{\tau,p}(1)t^{s-1}dt \vol(Y(N)) = \int_0^\infty h_t^{\tau,p}(1)t^{s-1}dt \vol(Y(N)) + E_2(s,T),
\end{align*}
with the error term $E_2(s,T)$ given by an integral convergent for all $s\in\C$ and satisfying
\begin{align}\label{error2}
    |E_2(0,T)|&\leq C\int_T^\infty e^{-\lambda t}t^{-1}dt \vol(Y(N)) \\
    &\leq C'e^{-\lambda T}\vol(Y(N))
\end{align}
for $T\geq 1$. We return to the second integral on the right-hand side of (\ref{unip-1}). Using the splitting of the orbital integrals into local parts, i.e. equation (\ref{localparts}), we may express this integral as
\begin{align}\label{expansioncalc}
    \sum_{(M,\mf{u})\neq(G,\lbrace 1\rbrace)}a^M(S(N),\mf{u})\sum_{L_1,L_2\in \mathcal{L}(M)}d_M(L_1,L_2)A_M^{L_1}(h^{\tau,p}_{t,R},\mf{u}_\infty,T)J_M^{L_2}(\chi_{K(N)},\mf{u}_{\text{fin}}),
\end{align}
with the archimedean part defined as 
\begin{align*}
    A_M^{L}(h^{\tau,p}_{t,R},\mf{u}_\infty,T) = \int_0^T J^L_M((h^{\tau,p}_{t,R})_Q,\mf{u}_\infty)t^{s-1}dt.
\end{align*}
We will treat this term like we did the integral of $h^{\tau,p}_{t,R}(1)$ above. By the asymptotics given in Proposition \ref{orbitsmallt}, this is convergent for $s$ in some half plane. By Proposition \ref{orbitasymp}, we may write
\begin{align}\label{archterm}
    A_M^{L}(h^{\tau,p}_{t,R},\mf{u}_\infty,T) = \int_0^\infty J^L_M((h^{\tau,p}_{t,R})_Q,\mf{u}_\infty)t^{s-1}dt+E_3(s,T),
\end{align}
with $E_3(s,T)$ some error term, given by an integral convergent for all $s\in\C$, analogously to $E_2$ satisfying
\begin{align}\label{error3}
    |E_3(0,T)|\leq C'' e^{-(\lambda-c') T} 
\end{align}
for $T> 1$ and some constants $c',C''>0$. We will write $A_M^{L}(h^{\tau,p}_{t,R},\mf{u}_\infty)$ for the integral on the right-hand side of (\ref{archterm}), which is convergent in some half plane with a meromorphic extension to all of $s\in\C$. This means that
\begin{align*}
    \text{FP}_{s=0}\left(\frac{1}{s\Gamma(s)}A_M^{L}(h^{\tau,p}_{t,R},\mf{u}_\infty)\right)
\end{align*}
is well defined. 
\begin{lem}\label{unip-1bound}
    Assume $\lambda > c_1$. Then there exists constants $C>0$ only depending on $\tau,p$ and $\lambda$ and $b>0$ only depending on the group such that for $T> 1$, we have
    \begin{align*}
    &\Bigg\vert\FP_{s=0}\left(\frac{1}{s\Gamma(s)}\int_0^T J_{\unip-1}(h^{\tau,p}_{t,R}\otimes \chi_{K(N)})t^{s-1}dt\right)\Bigg\vert  \\
    &\leq\:\: C N^{-(n-1)}(1+\log N)^b \vol(Y(N)).
\end{align*}
\end{lem}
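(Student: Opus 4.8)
The strategy is to expand the truncated Mellin transform of $J_{\unip-1}$ using the fine geometric expansion and bound each piece. By (\ref{expansioncalc}), the quantity inside the $\FP$ is a finite sum over pairs $(M,\mf{u})\neq (G,\{1\})$ and Levi subgroups $L_1,L_2\in\mathcal{L}(M)$ of the product
\begin{align*}
    a^M(S(N),\mf{u})\,d_M(L_1,L_2)\,A_M^{L_1}(h^{\tau,p}_{t,R},\mf{u}_\infty,T)\,J_M^{L_2}(\chi_{K(N)},\mf{u}_{\text{fin}}).
\end{align*}
Since the sum is finite with a number of terms depending only on $G$, it suffices to bound a single such term and then take the $\FP$ through the (finite) sum by linearity, using that $\FP_{s=0}$ is linear and that $\frac{1}{s\Gamma(s)}$ is holomorphic at $s=0$ with value $1$. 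The four factors are then estimated separately: the global coefficient $a^M(S(N),\mf{u})$ by Lemma \ref{coeffbound}, giving $\ll (1+\log N)^b$; the combinatorial constant $d_M(L_1,L_2)$ is an absolute constant; the non-archimedean factor $J_M^{L_2}(\chi_{K(N)},\mf{u}_{\text{fin}})$ by Lemma \ref{localbound}, giving $\ll N^{-\dim^G_M\mf{u}/2}(\log N)^d\vol(K(N))^{-1}$; and the archimedean factor requires the bulk of the work.

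For the archimedean factor, I would write $A_M^{L_1}(h^{\tau,p}_{t,R},\mf{u}_\infty,T) = A_M^{L_1}(h^{\tau,p}_{t,R},\mf{u}_\infty) + E_3(s,T)$ as in (\ref{archterm}), where the first term is the full Mellin transform, convergent in a half-plane and meromorphically continued to $\C$, so that $\FP_{s=0}\big(\frac{1}{s\Gamma(s)}A_M^{L_1}(h^{\tau,p}_{t,R},\mf{u}_\infty)\big)$ is well-defined; here I use Proposition \ref{orbitsmallt} for the small-$t$ expansion (which controls the poles and the finite part via the coefficients $c_{ij}(\tau,p)$) and Proposition \ref{orbitasymp} for the large-$t$ exponential decay. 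Taking $\FP$ and using $\frac{1}{s\Gamma(s)}\big|_{s=0}=1$, the finite part of the full Mellin transform is bounded by a constant depending only on $\tau,p,\lambda$ (extracted from the coefficients $c_{ij}(\tau,p)$ and the decay rate $\lambda$ in Proposition \ref{orbitasymp}, independent of $R$ by Remark \ref{independentR} for the small-$t$ side), while the contribution of $E_3$ at $s=0$ is bounded by $D_{\tau,p,k}e^{-\lambda T}R^a(\log R)^b \le D_{\tau,p,k}R^a(\log R)^b$ for $T\ge 1$ by (\ref{error3}). Combining, the archimedean factor contributes $\ll_{\tau,p,\lambda} R^a(\log R)^b$.

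Multiplying the four bounds, the factors $\vol(K(N))^{-1}$ and the implicit $\vol(K(N))$ relating $\vol(Y(N))$ cancel appropriately: recalling that $\vol(Y(N))$ is comparable to $[\, \GL(n,\hat{\Z}):K(N)\,]$ up to a fixed constant, the product $N^{-\dim^G_M\mf{u}/2}\vol(K(N))^{-1}$ times the global volume normalization yields $N^{-\dim^G_M\mf{u}/2}\vol(Y(N))$. Using the remark after Lemma \ref{localbound}, for every $(M,\mf{u})\neq (G,\{1\})$ we have $\dim^G_M\mf{u}/2 \ge n-1$, so $N^{-\dim^G_M\mf{u}/2}\le N^{-(n-1)}$. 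Collecting the logarithmic factors $(1+\log N)^b$ from Lemma \ref{coeffbound}, $(\log N)^d$ from Lemma \ref{localbound}, and absorbing $d$ into a single exponent $c$, and keeping $R^a(\log R)^b$ from the archimedean part, we obtain
\begin{align*}
    \ll_{\tau,p,\lambda} N^{-(n-1)}(1+\log N)^c R^a(\log R)^b\vol(Y(N)),
\end{align*}
which is the claimed bound.

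\medskip

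\noindent\emph{Main obstacle.} The delicate point is the archimedean factor: one must check that taking $\FP_{s=0}$ of $\frac{1}{s\Gamma(s)}$ times the meromorphically continued Mellin transform produces a bound with a constant depending only on $\tau, p, \lambda$ and not on $N$ or $T$. This requires knowing that the coefficients $c_{ij}(\tau,p)$ in Proposition \ref{orbitsmallt} and the decay constants in Proposition \ref{orbitasymp} are genuinely uniform in $N$, and — crucially — that the small-$t$ coefficients do not depend on the compactification radius $R$ (Remark \ref{independentR}), so that only the $E_3$ error term, which is the part that sees $R$, carries the $R^a(\log R)^b$ growth. One must also take care that the $\FP$ operation commutes with the finite sum and that the splitting (\ref{archterm}) into a full Mellin transform plus $E_3$ is legitimate for the $\FP$ computation, i.e. that both pieces are individually meromorphic at $s=0$; this follows since $E_3$ is given by a convergent integral for all $s$, hence entire, and in particular holomorphic at $0$ with $\FP_{s=0}(\frac{1}{s\Gamma(s)}E_3(s,T)) = E_3(0,T)$.
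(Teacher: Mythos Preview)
Your proposal is correct and follows essentially the same route as the paper: expand via (\ref{expansioncalc}), bound the global coefficients by Lemma~\ref{coeffbound}, the non-archimedean local integrals by Lemma~\ref{localbound} together with $\vol(Y(N))=c\,\vol(K(N))^{-1}$ and the remark $\dim_M^G\mf{u}/2\ge n-1$, and bound the archimedean piece by splitting into the full Mellin transform plus $E_3$ and invoking Propositions~\ref{orbitsmallt} and~\ref{orbitasymp}.

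One small correction: you assert that $\FP_{s=0}\big(\tfrac{1}{s\Gamma(s)}A_M^{L}(h^{\tau,p}_{t,R},\mf{u}_\infty)\big)$ is bounded by a constant independent of $R$, with the $R^a(\log R)^b$ arising solely from $E_3$. This is not quite right. Remark~\ref{independentR} ensures the small-$t$ coefficients $c_{ij}(\tau,p)$ are independent of $R$, but the full Mellin transform also involves $\int_1^\infty J_M^L((h^{\tau,p}_{t,R})_Q,\mf{u}_\infty)\,t^{-1}\,dt$, and the only available bound on the integrand for $t\ge 1$ is Proposition~\ref{orbitasymp}, which carries the factor $R^a(\log R)^b$. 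The paper accordingly bounds this full-Mellin finite part by a constant times $R^a(\log R)^b$ as well. This does not affect your final estimate, since you correctly conclude that the archimedean factor is $\ll_{\tau,p,\lambda} R^a(\log R)^b$; it is only the attribution of where the $R$-dependence enters that needs adjusting.
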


\begin{proof}
    Considering the expression (\ref{expansioncalc}) and the discussion above, it suffices to appropriately bound the three following terms:
    \begin{align*}
        &a^M(S(N),\mf{u}), \\
        & J_M^{L_2}(\chi_{K(N)},\mf{u}_{\text{fin}}) \:\: \text{for} \:\:(M,\mf{u})\neq (G,\lbrace1\rbrace), \\
        &\FP_{s=0}\left(\frac{1}{s\Gamma(s)}\left(A_M^{L}(h^{\tau,p}_{t,R},\mf{u}_\infty)+E_3(s,T)\right)\right).
    \end{align*}

    \noindent The global coefficients $a^M(S(N),\mf{u})$ were bounded in Lemma \ref{coeffbound}, and the local orbital integrals $J_M^{L_2}(\chi_{K(N)},\mf{u}_{\text{fin}})$ were bounded in Lemma \ref{localbound} and its remark. Note that $\vol(Y(N))=c\vol(K(N))^{-1}$ for some constant $c$ depending only on normalisations of measures (see the appendix).

    By Proposition \ref{orbitsmallt} and its remark, alongside Proposition \ref{orbitasymp}, the term 
    $$\FP_{s=0}\left(\frac{1}{s\Gamma(s)}A_M^{L}(h^{\tau,p}_{t,R},\mf{u}_\infty)\right)$$
    is uniformly bounded by a constant, following the theory of Mellin transforms. The bound can be chosen to be uniform over all $(L,M,\mf{u}_\infty)$, as there are only finitely many such triples. Finally, the term
    $$\FP_{s=0}\left(\frac{1}{s\Gamma(s)}E_3(s,T)\right) = E_3(0,T)$$ 
    was bounded in (\ref{error3}), and we see that it is vanishing in $T$ if $\lambda >c_1$, in particular bounded by a constant. Collecting all these bounds and applying to (\ref{expansioncalc}), we arrive at the desired bound. 
\end{proof}

\subsection*{Asymptotics of analytic torsion}

Recall the definition of analytic torsion (\ref{torsion}). By collecting our initial analysis in the previous subsections, we may write

\begin{align}\label{analbound}
    \log T_{Y(N)}(\tau) &= \FP_{s=0}\left(\frac12\frac{1}{s\Gamma(s)}\int_0^\infty\sum_{p=1}^d (-1)^p p \, h^{\tau,p}_t(1)t^{s-1}dt\right) \vol(Y(N)) \\\label{FPunip-1}
    &+ \FP_{s=0}\left(\frac12\sum_{p=1}^d (-1)^p p\,\frac{1}{s\Gamma(s)} \int_0^T J_{\unip-1}(h^{\tau,p}_{t,R}\otimes\chi_{K(N)})t^{s-1}dt\right) \\
    &+ \frac12\sum_{p=1}^d (-1)^p p\, \left(E_0(0,T) + E_1(0,R,T)+E_2(0,T)\right)
\end{align}
The first term on the right-hand side is the $L^2$-torsion of $Y(N)$ (see \cite{Lott}). We set
\begin{align*}
    t^{(2)}_{\Tilde{X}}(\tau)\coloneqq  \frac12\frac{d}{ds}\left( \frac{1}{\Gamma(s)}\int_0^\infty\sum_{p=1}^d (-1)^p p \, h^{\tau,p}_t(1)t^{s-1}dt\right)\Bigg\vert_{s=0}.
\end{align*}
Note that although $\frac{1}{\Gamma(s)}\int_0^\infty h^{\tau,p}_t(1)t^{s-1}dt$ may not be holomorphic at $s=0$, swapping $h^{\tau,p}_t(1)$ with this alternating sum over $p$ seen above turns the meromorphic extension holomorphic at $s=0$, from which the above definition is well defined (see \cite{BV}, §$4.4$). Then the first term is exactly $\log T^{(2)}_{Y(N)} = t^{(2)}_{\Tilde{X}}(\tau)\vol(Y(N))$. All that is left to prove Theorem \ref{glmytheorem} is to show that the second and third term above both have the form $O(\vol(Y(N))N^{-(n-1)}\log(N)^b)$.

This was accomplished for the second term in Lemma \ref{unip-1bound} when $\lambda>c_1$, as we assume $R\leq C_n\log N$. Note that the sum over $p$ contributes at most a constant multiple, as the bound was independent of $p$. To arrive at our desired bound for the third and final term, we wish to pick our parameters $R$ and $T$ such that the error terms $E_0,E_1,E_2$ are all as small as possible. This is done in the following section.

\subsection*{A dance of error terms}\label{dance}

We pick $T=\beta \log N$ and $R=C_n\log N$, with $C_n$ from Proposition \ref{compactunip} and $\beta>0$ still to be determined. We see that the integral $\int_0^{T/R^2}e^{-C_4/t}t^{s-1}dt$ is then vanishing in $N$, in particular bounded by a constant for $N\geq 3$. Thus, by Lemma \ref{error0}, (\ref{error1}), and (\ref{error2}) respectively we have
\begin{align*}
    E_0(0,\beta\log N) &= O( N^{-\lambda(1-\epsilon)\beta}\vol(Y(N))),\\
    E_1(0,C_n \log N,\beta\log N) &= O( N^{-C_4C_n^2/\beta +C_2\beta}\vol(Y(N))),\\
    E_2(0,\beta\log N) &= O( N^{-\lambda\beta}\vol(Y(N))).
\end{align*}
The implied constants may depend on $\tau$. We now ensure that the error terms above are of the size
\begin{align}\label{secondordersize}
    O(\vol(Y(N)N^{-(n-1)}(\log N)^a)
\end{align}
for some $a>0$ and $N$ large enough. Here is where the independence of the constants $C_2,C_4$ on $\tau$ becomes important. 

Pick $\beta$ such that 
$$C_4C_n/\beta-C_2\beta>n-1.$$
This choice of $\beta$ then only depends on $n$, i.e. on the group $G$. With $\beta$ fixed, let $\lambda$ be chosen such that 
\begin{align*}
    \lambda(1-\epsilon)\beta&>n-1, \:\text{ and}\\
    \lambda &> c'.
\end{align*}

\noindent The latter inequality is to ensure we can use Lemma \ref{unip-1bound}. As $\epsilon$ can be chosen arbitrarily close to $0$, for the first inequality it is in fact sufficient to satisfy $\lambda>\frac{n-1}{\beta}$. Note that this choice of $\lambda$ also only depends on $C_2$ and $C_4$, which only depend on $G$. This implies that for $\tau$ being $\lambda$-strongly acyclic for the any $\lambda$ satisfying this inequality, all the error terms are of the form (\ref{secondordersize}) as desired.

We have now proven Theorem \ref{glmytheorem}, from which Theorem \ref{mytheorem} follows.

\clearpage
\bigskip

\pagebreak

\section{Appendix}
\noindent For the convenience of the reader, in this section we prove the well known formula of the volume of the group $K(N)$ and show its inverse relation to the volume of its associated adelic locally symmetric space. This is summarized in the following proposition.

\begin{prop}
    Let $K(N)$ be the open compact subgroup of $\GL(n,\A_f)$ given by $K(N) = \prod_p K_p(p^{v_p(N)})$, where $K_p(p^k)=\ker(\GL(n,\Z_p)\to \GL(n,\Z/p^k\Z))$, and let $Y(N)$ be the associated adelic locally symmetric space
    \begin{align*}
        Y(N) = \GL(n,\Q)\backslash (\GL(n,\A)/\SO(n)K(N)).
    \end{align*} 
    Similarly, let $K'(N)$ be the analogous open compact subgroup of $\SL(n,\A_f)$, with adelic locally symmetric space $X(N)$. Then the following formulas hold:
    \begin{align}
        \label{volformula}\vol(K(N)) &= \left(N^{n^2}\prod_{p\mid N}\prod_{k=1}^{n}\left(1-\frac{1}{p^k}\right)\right)^{-1}\\
        \label{relatevol}\vol(K(N)) &= \phi(N)^{-1}\vol(K'(N)).
    \end{align}
    Here $\phi$ is Euler's totient function. Furthermore, let
    $$c(n)\coloneqq \vol(\SL(n,\Z)\backslash(\SL(n,\R)/\SO(n)))^{-1}.$$
    This constant depends only on normalizations of measures. We have the relations
    \begin{align}
        \label{GLnspacevol}\vol(K(N)) &= c(n) \vol(Y(N))^{-1}\\
        \label{SLnspacevol}\vol(K'(N)) &= c(n) \vol(X(N))^{-1}.
    \end{align}
\end{prop}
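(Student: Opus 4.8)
The plan is to establish the four displayed formulas essentially independently, since they split into a purely local adelic volume computation, a comparison between $\GL(n)$ and $\SL(n)$, and a translation between adelic volume and the volume of the associated locally symmetric space.

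First I would prove \eqref{volformula}. Since $K(N)=\prod_p K_p(p^{\nu_p(N)})$ and the Haar measure on $\GL(n,\A_f)$ is a product of local Haar measures normalized so that $\vol(\GL(n,\Z_p))=1$, we have $\vol(K(N))=\prod_p \vol\bigl(K_p(p^{\nu_p(N)})\bigr)$, a finite product over $p\mid N$. For each such $p$, $K_p(p^e)$ is the kernel of the surjection $\GL(n,\Z_p)\to\GL(n,\Z/p^e\Z)$, so its index in $\GL(n,\Z_p)$ equals $|\GL(n,\Z/p^e\Z)|$. The order of $\GL(n,\Z/p^e\Z)$ is computed via the reduction map $\GL(n,\Z/p^e\Z)\to\GL(n,\F_p)$, whose kernel has order $p^{(e-1)n^2}$ (matrices congruent to $I$ mod $p$), while $|\GL(n,\F_p)|=p^{n^2}\prod_{k=1}^n(1-p^{-k})$. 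Hence $\vol(K_p(p^e))^{-1}=p^{en^2}\prod_{k=1}^n(1-p^{-k})$, and taking the product over $p\mid N$ and using $\prod_{p\mid N}p^{\nu_p(N)n^2}=N^{n^2}$ gives \eqref{volformula}.

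For \eqref{relatevol}, I would use the determinant map $\det:\GL(n,\Z_p)\to\Z_p^\times$, under which $K_p(p^e)$ maps onto $1+p^e\Z_p$ with kernel exactly $K_p'(p^e)$. Thus $K_p(p^e)/K_p'(p^e)\cong 1+p^e\Z_p$, which has index $\phi(p^e)=p^{e-1}(p-1)$ in $\Z_p^\times$ (for the normalization $\vol(\Z_p^\times)=\vol(\GL_1(\Z_p))$ matching the $\SL$/$\GL$ measure compatibility). Working at the level of measures — writing Haar measure on $\GL(n,\Q_p)$ as a product of Haar measure on $\SL(n,\Q_p)$ and on $\Q_p^\times$ via the determinant — one gets $\vol(K_p'(p^e))=\vol(K_p(p^e))\cdot\vol(1+p^e\Z_p)^{-1}$ with the appropriate normalization, and multiplying over $p\mid N$ and using $\prod_{p\mid N}\phi(p^{\nu_p(N)})=\phi(N)$ yields \eqref{relatevol}. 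One must be careful that the measure normalizations on $\SL$ and $\GL$ are chosen compatibly; I would state this normalization explicitly at the outset.

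Finally, for \eqref{GLnspacevol} and \eqref{SLnspacevol}, I would use the decomposition \eqref{decompsym}: $X(K_f)\cong\bigsqcup_{j=1}^m \Gamma_j\backslash\Tilde X$, together with the fact that the double coset space $A_G(\R)^0 G(\Q)\backslash G(\A)/G(\R)K_f$ and the volumes of its pieces are controlled by $\vol(K_f)$. Concretely, for $G=\SL(n)$ strong approximation gives $m=1$ and $\Gamma=\Gamma(N)$, and the general principle that $\vol\bigl(G(\Q)\backslash G(\A)^1/K_f\bigr)=\vol(\SL(n,\Z)\backslash\SL(n,\R))\cdot[\SL(n,\widehat\Z):K_f]=c(n)^{-1}\vol(K'(N))^{-1}$ gives \eqref{SLnspacevol} directly. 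For $\GL(n)$ one writes $Y(N)$ as $\phi(N)$ copies of $X(N)$ (as noted in \eqref{volglsl}), so $\vol(Y(N))=\phi(N)\vol(X(N))=\phi(N)c(n)\vol(K'(N))^{-1}=c(n)\vol(K(N))^{-1}$ using \eqref{relatevol}, giving \eqref{GLnspacevol}. The main obstacle, and the only genuinely delicate point, is bookkeeping the measure normalizations consistently across $\GL$ and $\SL$ and across the archimedean and finite places, so that the single constant $c(n)$ absorbs exactly the normalization ambiguity and nothing else; once a fixed normalization is declared, each individual step is a routine index computation.
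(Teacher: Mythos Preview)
Your proposal is correct and follows essentially the same approach as the paper: a local index computation for \eqref{volformula} via $|\GL(n,\Z/p^e\Z)|$, then \eqref{relatevol}, then strong approximation for \eqref{SLnspacevol} and deducing \eqref{GLnspacevol} from it via $\vol(Y(N))=\phi(N)\vol(X(N))$. The only cosmetic difference is in \eqref{relatevol}: the paper simply counts $|\GL(n,\Z/p^m\Z)|=\phi(p^m)\,|\SL(n,\Z/p^m\Z)|$ via the determinant on the \emph{finite} quotient, whereas you phrase the same computation at the level of Haar measures and the exact sequence $K_p'(p^e)\to K_p(p^e)\xrightarrow{\det}1+p^e\Z_p$; both yield $\vol(K_p'(p^e))=\phi(p^e)\vol(K_p(p^e))$ and amount to the same index calculation.
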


\begin{proof}
As $K(N) = \prod_p K_p(p^{v_p(N)})$, and the measures respect this decomposition, it is sufficient to compute $\vol(K_p(p^{v_p(N)}))$. Fix $p$, and let $m=v_p(N)$. $K_p(p^m)$ is the kernel of the surjective map $\GL(n,\Z_p)\to\GL(n,\Z_p/p^m\Z_p)\cong \GL(n,\Z/p^m\Z)$. Thus,
$$\GL(n,\Z_p)=\bigsqcup_{a\in\GL(n,\Z/p^m\Z)}(a\cdot K_p(p^m))$$
where we by abuse of notation let $a\in\GL(n,\Z/p^m\Z)$ also denote any lift to $\GL(n,\Z_p)$. Since the cosets are disjoint, and they are homeomorphic through multiplication, they all have the same measure, $\vol(K_p(p^m))$. Since the left-hand side has measure $1$ in our normalization, we get that 
$$\vol(K_p(p^m))=\frac{1}{|\GL(n,\Z/p^m\Z)|}.$$

\noindent To determine $|\GL(n,\Z/p^m\Z)|$, we use the following short exact sequence of finite groups
$$0\to\lbrace I+pA\mid A\in M_n(\Z/p^m\Z)\rbrace\to \GL(n,\Z/p^m\Z)\to \GL(n,\Z/p\Z)\to 0.$$

\noindent The rightmost group has cardinality $\prod_{k=0}^{n-1}(p^n-p^k)$ by counting number of bases, using that $\Z/p\Z$ is a field. The leftmost group is bijective to $\lbrace pA\mid A\in M_n(\Z/p^m\Z)\rbrace$, which is the kernel of $M_n(\Z/p^m\Z)\to M_n(\Z/p\Z)$, induced by the obvious quotient map. These have cardinality $p^{n^2m}$ and $p^{n^2}$ respectively, so their kernel has cardinality $p^{m^2(n-1)}$. Plugging everything in, we get that
\begin{align}\label{localvol}
 |\GL(n,\Z/p^m\Z)|\:&= |\GL(n,\Z/p\Z)|\cdot |\lbrace I+pA\mid A\in M_n(\Z/p^m\Z)\rbrace| \\
&= p^{n^2(m-1)}\prod_{k=0}^{n-1}(p^n-p^k) = (p^{m})^{n^2}\prod_{k=0}^{n-1}\left(1-\frac{1}{p^{n-k}}\right).
\end{align}
Reindexing the product, this establishes formula (\ref{volformula}). For the next formula, running through the same argument for $K'(N)$, we get that
\begin{align}\label{SLnlocalvol}
    \vol(K_p'(p^{m})) = \frac{1}{|\SL(n,\Z/p^m\Z)|}.
\end{align}
As the number of units of $\Z/p^m\Z$ is exactly $\phi(p^m)$, we get that 
\begin{align*}
    |\GL(n,\Z/p^m\Z)| = \phi(p^m) |\SL(n,\Z/p^m\Z)|,
\end{align*}
and this proves (\ref{relatevol}). 

Now, let us relate $\vol(Y(N))$ to $\vol(K(N))$. By combining (\ref{volglsl}) and (\ref{relatevol}), we see that the two latter statements of the proposition are equivalent, and so it is sufficient to prove (\ref{SLnspacevol}). By strong approximation, $X(N) =\Gamma(N)\backslash\SL(n,\R)/\SO(n)$, with $\Gamma(N)$ the classical principal congruence subgroup of level $N$. The fundamental domain of $X(N)$ can be viewed as $[\SL(n,\Z):\Gamma(N)]$ copies of a fundamental domain of $\SL(n,\Z)\backslash \SL(n,\R)/\SO(n)$, and hence
\begin{align*}
    \vol(X(N)) = \vol(\SL(n,\Z)\backslash \SL(n,\R)/\SO(n))[\SL(n,\Z):\Gamma(N)].
\end{align*}
Note that $[\SL(n,\Z):\Gamma(N)] = |\SL(n,\Z/N\Z)|$ essentially by definition of $\Gamma(N)$. By the Chinese remainder theorem and (\ref{SLnlocalvol}), we have that
\begin{align*}
    |\SL(n,\Z/N\Z)| = \prod_{p\mid N}|\SL(n,\Z/p^{v_p(N)}\Z)| = \vol(K'(N))^{-1}.
\end{align*}
This concludes the proof.
\end{proof}

\noindent The proposition immediately implies $\vol(X(N)) = O(N^{n^2-1})$. In particular, we get the inequality
\begin{align*}
    N^{-(n-1)}= O\left(\vol(X(N))^{-\frac{1}{n+1}}\right),
\end{align*}
with which we can rewrite the main theorem into its invariant form in Remark \ref{mytheoreminvar}.

\bigskip

\pagebreak

\bibliographystyle{amsalpha} 
\bibliography{refs} 

\end{document}